\pgfplotsset{compat=1.12}
\definecolor{bostonuniversityred}{rgb}{0.7, 0.0, 0.0}
\definecolor{forestgreen}{rgb}{0.13, 0.45, 0.13}
\definecolor{blue-violet}{rgb}{0.5, 0.1, 0.8}
\definecolor{blue-new}{rgb}{0.2, 0.2, 0.8}
\definecolor{amber}{rgb}{1.0, 0.49, 0.0}
\definecolor{applegreen}{rgb}{0.55, 0.71, 0.0}
\newtheorem{theorem}{Theorem}[section]
\newtheorem{definition}{Definition}[section]
\newtheorem{lemma}[theorem]{Lemma}
\newtheorem{proposition}[theorem]{Proposition}
\newtheorem{corollary}[theorem]{Corollary}
\theoremstyle{definition}
\theoremstyle{definition}
\newtheorem{remark}[theorem]{Remark}
\numberwithin{equation}{section}
\def\thanks#1{\protected@xdef\@thanks{\@thanks
        \protect\footnotetext{#1}}}
\renewenvironment{abstract}[1]
{\list{}{\setlength{\leftmargin}{3em}
 \setlength{\rightmargin}{\leftmargin}}\item[]
\textbf{\abstractname.} #1\relax}
{\endlist}
\providecommand{\thanksbody}
{\hspace{-2em}
The authors are members of the \emph{Grup\-po Na\-zio\-na\-le per l'Anali\-si Ma\-te\-ma\-ti\-ca, la Pro\-ba\-bi\-li\-t\`{a} e le lo\-ro Appli\-ca\-zio\-ni} (GNAMPA) of the \emph{Isti\-tu\-to Na\-zio\-na\-le di Al\-ta Ma\-te\-ma\-ti\-ca} (INdAM) and acknowledge financial support from this institution and the PRIN 2022 project \emph{Modeling, Control and
Games through Partial Differential Equations} (D53D23005620006), funded by the European Union - Next Generation EU.
}
\title{\textbf{Wavefronts for a degenerate reaction-diffusion system with application to bacterial growth models}}
\author{Luisa Malaguti, Elisa Sovrano
\thanks{\thanksbody}
}
\begin{document}

\newcommand{\Addresses}{
\bigskip
\small
\noindent Luisa Malaguti\\
Dipartimento di Scienze e Metodi dell'Ingegneria,\\
Universit\`a degli Studi di Modena e Reggio Emilia,\\
Via G. Amendola 2, 42122 Reggio Emilia, Italy\\
email: \href{mailto:luisa.malaguti@unimore.it}{\texttt{luisa.malaguti@unimore.it}}
\medskip

\noindent Elisa Sovrano\\
Dipartimento di Scienze e Metodi dell'Ingegneria,\\
Universit\`a degli Studi di Modena e Reggio Emilia,\\
Via G. Amendola 2, 42122 Reggio Emilia, Italy\\
email: \href{mailto:elisasovrano@unimore.it}{\texttt{elisasovrano@unimore.it}}
}

\maketitle
\thispagestyle{empty}

{\small
\begin{abstract}
\noindent We investigate wavefront solutions in a nonlinear system of two coupled reaction-diffusion equations with degenerate diffusivity:
\[n_t = n_{xx} - nb, \quad b_t = [D nbb_x]_x + nb,\]
where $t\geq0,$ $x\in\mathbb{R}$, and $D$ is a positive diffusion coefficient. This model, introduced by Kawasaki et al. (J. Theor. Biol. 188, 1997), describes the spatial-temporal dynamics of bacterial colonies $b=b(x,t)$ and nutrients $n=n(x,t)$ on agar plates. Kawasaki et al. provided numerical evidence for wavefronts, leaving the analytical confirmation of these solutions an open problem. We prove the existence of an infinite family of wavefronts parameterized by their wave speed, which varies on a closed positive half-line. We provide an upper bound for the threshold speed and a lower bound for it when $D$ is sufficiently large. The proofs are based on several analytical tools, including the shooting method and the fixed-point theory in Fréchet spaces, to establish existence, and the central manifold theorem to ascertain uniqueness.

\textbf{Mathematics Subject Classifications:} 35C07, 35K55, 35K57.

\textbf{Keywords:} degenerate diffusion, reaction-diffusion equations, wavefront, wave speed.
\end{abstract}
}

\section{Introduction and main results}\label{section-1}

In this article, we are exploring a nonlinear system of two coupled reaction-diffusion equations that models the growth of bacteria on agar plates with nutrients. Based on laboratory experiments and numerical analyses, it has been observed that bacterial evolution can produce various spatiotemporal patterns during bacterial growth, including rings, disks, and dense branching morphologies, which depend on the bacterial species and nutrient conditions (see~\cite{GKCB-98,Ha-04,KMUS-97,MSM-00}). To study the evolution of bacteria and nutrients, we consider the reaction-diffusion model as described in \cite{KMUS-97}:
\begin{subnumcases}{\label{eq-sys}}
	n_t= D_n n_{xx}-nb, \\
	b_t=[D_b nbb_x]_x+nb,
\end{subnumcases}
where $n=n(x,t)$ and $b=b(x,t)$ are the concentration of the nutrient and the population density of bacteria, respectively, at time $t\geq0$ and space $x\in\mathbb{R}$. The parameters $D_n$ and $D_b$ are the nutrient and bacteria diffusion coefficients, respectively. The diffusivity of bacteria is nonlinear and degenerates to model its trend toward reduced mobility when the nutrient or the bacteria concentration level is low (i.e., if either $n$ or $b$ tends to zero).

Experimental observations reported in~\cite{KMUS-97} have shown that system~\eqref{eq-sys} can generate spatiotemporal patterns supported by traveling wave solutions. These solutions are characterized by profiles that propagate at a constant speed, specifically represented by:
\begin{equation}\label{wf}
n(x,t) = \eta(x - ct), \quad b(x,t) = \beta(x - ct),
\end{equation}
where $(\eta,\beta)$ represents the profile and $c \in \mathbb{R}$ denotes the wave speed, which is an unknown of the problem. Despite these experimental observations, there is a lack of theoretical proof for the existence of traveling wave solutions for  system~\eqref{eq-sys}, nor has there been an analytical study of the admissible wave speeds. This work aims to address these theoretical and analytical gaps.

In~\cite{SMGA-01}, researchers analyzed the case $D_n=0$ and, through both numerical and analytical approaches, indicated that wavefronts display ``sharp'' behavior in the bacteria profile at a critical speed (see Definition~\ref{def-2}). In~\cite{MHST-24}, the studied system is
\[
\begin{cases}
n_t= -f(n,b), \\
b_t=[g(n)h(b)b_x]_x+f(n,b),
\end{cases}
\]
where the diffusion term $g(n)h(b)$ degenerates at both $n = 0$ and $b = 0$, and the reaction term $f(n,b)$ is positive except when $n = 0$ or $b = 0$. They established the existence of wavefronts for all wave speeds within a closed half-line and, similar to the model in~\cite{SMGA-01}, showed that the profile exhibits ``sharp'' behavior at the threshold speed. It has been also demonstrated in~\cite{MHST-24} that the profiles for all other speeds are smooth. For the classical non-degenerate case, the analogous system studied in literature is:
\[
\begin{cases}
n_t= \lambda n_{xx}-f(n,b), \\
b_t=b_{xx}+f(n,b),
\end{cases}
\]
where $\lambda \geq 0$. Results depend on different properties of the function $f$, which is consistently assumed to be as before (see, e.g.,~\cite{AiHuang-07, BNS-85, BiNe-91, Bo-95,  Lo-97, Ma-85}). This model applies to contexts such as modeling an isothermal autocatalytic chemical reaction (where $n$ and $b$ represent the concentrations of the autocatalysts) or modeling thermal diffusive combustion (where $n$ is the concentration of the reactant and $b$ is the temperature of the mixture). Various results regarding wavefronts have been obtained depending on the value of $\lambda$ and the properties of $f$.  If $\lambda = 0$ or $\lambda > 0$ and $f(n,b) = nb$, an admissible closed half-line of wave speeds has been demonstrated (see~\cite{BiNe-91, Lo-97}). Conversely, if $\lambda > 0$ and $f(n,b) = \tilde{g}(n)b$ with $\tilde{g}(0) = 0$ and $\tilde{g}(s) > 0$ for all $s \in (0, 1]$, then an admissible closed half-line of wave speeds is guaranteed for $0 < \lambda < 1$ (see~\cite{Ma-85}). For $\lambda > 1$, however, the set of admissible speeds may be disconnected (see~\cite{Bo-95}). These results have been further improved in~\cite{AiHuang-07} to include more general reaction terms $f(n,b)$.

Without loss of generality, in the following, we assume in~\eqref{eq-sys} that 
\[D_n=1 \quad\text{and}\quad D_b=D>0.\]
When introducing the wave coordinate $\xi:=x-ct$, the functions $\eta$ and $\beta$ in~\eqref{wf} satisfy the system
\begin{subnumcases}{\label{eq-sys-1}}
	\eta''+c\eta'-\eta\beta=0, \label{eq-n} \\
	\left(D \eta\beta\beta'\right)'+c\beta'+\eta\beta=0, \label{eq-b}
\end{subnumcases}
where $'=\frac{\mathrm{d}}{\mathrm{d}\xi}$. The nontrivial stationary solutions associated with~\eqref{eq-sys-1} belong to one of the following sets:
\[
\{(\eta,\beta) \colon \eta \equiv K_\eta, \, \beta \equiv 0, \text{ with } K_\eta \in (0,+\infty)\}, 	\quad
\{(\eta,\beta) \colon \eta \equiv 0, \, \beta \equiv K_\beta, \text{ with } K_\beta \in (0,+\infty)\}.
\]
Here, we look for traveling waves of~\eqref{eq-sys} that connect nontrivial stationary solutions characterized by profiles $(\eta,\beta)$ consisting of a pair of strictly monotone functions (i.e., \textit{wavefronts}). In the following, we will consider $K_\eta=1=K_\beta$ and look for wavefronts that connect the two stationary solutions $(0,1)$ and $(1,0)$. This leads to the boundary conditions:
\begin{subnumcases}{\label{eq-bc}}
	\left(\eta(-\infty),\beta(-\infty)\right)=(0,1), \label{eq-bc1} \\
	\left(\eta(+\infty),\beta(+\infty)\right)=(1,0), \label{eq-bc2}
\end{subnumcases}
where for any function $m(\xi)$ with a limit at $\pm\infty$, we abbreviate $\lim_{\xi\to\pm\infty}m(\xi)=m(\pm\infty)$.

Given the presence of a degenerate diffusion term in system~\eqref{eq-sys}, before stating our results, let us point out the solution notion we are seeking (see~\cite{GiKe-04}).

\begin{definition}[Wavefront and semi-wavefront]\label{def-wave}
Let $J \subseteq \mathbb{R}$ be an open interval and $c \in \mathbb{R}$. Consider two non-constant monotone functions $\eta, \beta\colon J \to \mathbb{R}$ such that
\begin{enumerate}[nosep,wide=0pt, labelwidth=15pt, align=left]
\item[$(i)$] $\eta$ is of class $C^2$ and solves~\eqref{eq-n};
\item[$(ii)$] $\beta$ is continuous and differentiable a.e. with $\beta\beta'\in L^1_{loc}(J)$ and solves weakly~\eqref{eq-b}, namely, for every $\psi\in C^{\infty}_0(J)$, it satisfies
\begin{equation}\label{eq-weak}
\int_{J} \left[\left(D\eta(\xi)\beta(\xi)\beta'(\xi)+c\beta(\xi)\right)\psi'(\xi) -\eta(\xi)\beta(\xi)\psi(\xi) \right]\,\mathrm{d}\xi=0.
\end{equation}
\end{enumerate}
\begin{itemize}[nosep,wide=0pt, align=left]
\item When $J=\mathbb{R}$ and $(\eta,\beta)$ satisfies~\eqref{eq-bc}, $(n,b)$ with $n(x,t) = \eta(x-ct) = \eta(\xi)$ and $b(x,t) = \beta(x-ct) = \beta(\xi)$ is called (global) \emph{wavefront} of~\eqref{eq-sys}.
\item When $J=(-\infty,a)$ (or $J=(a,+\infty)$) with $a\in\mathbb{R}$ and $(\eta,\beta)$ satisfies~\eqref{eq-bc1} (or~\eqref{eq-bc2}), $(n,b)$ with $n(x,t) = \eta(x-ct) = \eta(\xi)$ and $b(x,t) = \beta(x-ct) = \beta(\xi)$ is called \emph{semi-wavefront} in $J$ of~\eqref{eq-sys}.
\end{itemize}

In both cases, the pair $(\eta,\beta)$ denotes the wave profile of $(n,b)$, whereas $c$ stands for the wave speed.
\end{definition}

\noindent\textit{Warning.} For conciseness, we will refer also to the profile $(\eta,\beta)$ as ``wavefront'' of~\eqref{eq-sys} or ``semi-wavefront'' of~\eqref{eq-sys} (satisfying either~\eqref{eq-bc1} or~\eqref{eq-bc2}).

\begin{definition}[Classical and sharp wavefront]\label{def-2}
A wavefront $(\eta,\beta)$ is said to be classical if the function $\beta$ is differentiable, $\beta\beta'$ is absolutely continuous, and the equation~\eqref{eq-b} holds~a.e.
On the other hand, a wavefront $(\eta,\beta)$ is said to be sharp at the point $\ell\in\{0,1\}$ if there exists a real number $\xi_\ell$ such that $\beta(\xi_\ell)=\ell$, and the function $\beta$ is classical on the set $\mathbb{R}\setminus\{\xi_\ell\}$, but not differentiable at the point $\xi_\ell$.
\end{definition}

This work investigates conditions under which wavefronts exist and examines their regularity properties. Specifically, we demonstrate the existence and uniqueness of a semi-wavefront for each positive wave speed in the negative half-line $(-\infty,0]$. Our approach utilizes shooting method and fixed-point theory in Fréchet spaces to establish existence and the central manifold theorem to ascertain uniqueness.

\begin{theorem}\label{th-1}
Let $D_n=1$ and $D_b=D>0$. For each $c>0$, system~\eqref{eq-sys} has a unique semi-wavefront in $(-\infty, 0]$ with wave speed $c$ whose profile $(\eta,\beta)$ satisfies~\eqref{eq-bc1}. Moreover, $\eta'>0$ and $\beta'<0$.
\end{theorem}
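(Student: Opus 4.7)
The plan is to recast~\eqref{eq-sys-1} as a first-order ODE system, analyze it near the equilibrium $(\eta,\beta)=(0,1)$, build the semi-wavefront by a shooting / fixed-point argument, and derive uniqueness from the central manifold theorem applied to a desingularized system. Linearizing~\eqref{eq-n} at $(\eta,\beta)=(0,1)$ gives $\eta''+c\eta'-\eta=0$, whose positive characteristic root is $\lambda := \frac{-c+\sqrt{c^{2}+4}}{2}$. A matching asymptotic expansion in~\eqref{eq-b} then yields $\eta(\xi)\sim A e^{\lambda\xi}$ and $1-\beta(\xi)\sim B e^{\lambda\xi}$ as $\xi\to-\infty$, with $A,B>0$ and the ratio $B/A$ pinned down by the equations, so that only a single free parameter (the translation) survives. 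This identifies the unique asymptotic profile from which any admissible semi-wavefront must emerge.

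For existence, I would run a shooting argument from $-\infty$. Fix a sufficiently negative cut-off $\xi_{0}$ and impose initial data $(\eta(\xi_{0}),\beta(\xi_{0}))=(Ae^{\lambda\xi_{0}},1-Be^{\lambda\xi_{0}})$. Integrating~\eqref{eq-b} once from $-\infty$ to $\xi$ produces the flux identity $D\eta\beta\beta'(\xi)+c\beta(\xi)+\int_{-\infty}^{\xi}\eta(s)\beta(s)\,\mathrm{d}s = c$, which allows one to recast the whole system on $(-\infty,\xi_{0}]$ as a fixed point of a Volterra-type operator in a Fréchet space of continuous functions endowed with seminorms carrying an exponential weight at $-\infty$. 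Contractivity for $|\xi_{0}|$ large (where the linear part dominates) yields a local trajectory emanating from the equilibrium, which standard ODE theory continues forward as long as $\eta\beta>0$. A phase-plane argument preserves the required monotonicity: if $\eta'$ vanished first at some $\xi_{1}$, then $\eta''(\xi_{1})=\eta(\xi_{1})\beta(\xi_{1})>0$ would contradict $\eta'$ reaching $0$ from above, and the flux identity combined with positivity of $\eta\beta$ analogously rules out a zero of $\beta'$. A translation in $\xi$ then normalizes the interval of definition to $(-\infty,0]$.

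For uniqueness I would apply the central manifold theorem at the equilibrium after an appropriate desingularization of the first-order system (a blow-up combined with a time rescaling aligned with the rate $\lambda$) that removes the singularity on $\{\eta=0\}$ and produces a (partially) hyperbolic equilibrium in the new coordinates. The one-dimensional local invariant manifold aligned with the direction $\lambda$ then carries a single trajectory modulo translation; pulling back to the original time $\xi$ and fixing the interval as $(-\infty,0]$ delivers the uniqueness claim.

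The main obstacle is the interaction between the degenerate diffusion and the equilibrium: $(0,1)$ lies precisely on the singular locus $\{\eta=0\}$, so neither the usual stable-manifold theorem nor a direct contraction argument applies off the shelf. The choice of weights in the Fréchet space must simultaneously control the exponential decay rate $\lambda$ and the integrability demanded by the weak formulation~\eqref{eq-weak}, while the desingularization used in the uniqueness argument must faithfully reproduce the same asymptotics as the existence construction, so that the two arguments really describe the same trajectory.
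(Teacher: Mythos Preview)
Your overall plan---shooting/fixed-point for existence, central manifold for uniqueness---matches the paper's strategy, but the implementations differ, and the uniqueness sketch has a real gap.

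\textbf{Existence.} The paper does not shoot directly from the degenerate equilibrium. Instead it fixes a value $\eta_{0}\in\bigl(0,\tfrac{c}{c+\sigma_{2}}\bigr)$ at $\xi=0$, decouples the system, and iterates: given $\beta$ in a closed convex set $\mathfrak{B}\subset C(-\infty,0]$, it solves the \emph{linear} equation~\eqref{eq-n} for $\eta$ via upper/lower solutions (obtaining sharp exponential bounds~\eqref{eq-bound-n}), then uses~\eqref{eq-bis} to reduce~\eqref{eq-b} to a first-order ODE for $\beta$ and runs a shooting argument on the initial value $\beta(0)$ (Proposition~\ref{prop-y}). The composite map $\beta\mapsto y$ is shown compact and continuous on $\mathfrak{B}$ and Schauder yields a fixed point. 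This route has the advantage that all ODEs are solved away from the singular set $\{\eta=0\}$, so no exponential weights or delicate contraction estimates near $-\infty$ are needed; the boundary behaviour is recovered a posteriori from the explicit sub/super-solutions. Your direct Volterra-contraction from $-\infty$ is plausible but hits exactly the obstacle you name; the paper avoids it rather than confronts it.

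\textbf{Uniqueness.} Here your sketch is incorrect as stated. After the paper's desingularization (the time change $\Phi'(\xi)=\tfrac{1}{D\eta\beta}$ and the coordinates $(p,q,r)$ of~\eqref{eq-manifold}) the Jacobian at the equilibrium has eigenvalues $0,0,-c$: the centre manifold is \emph{two}-dimensional, not one-dimensional, and the semi-wavefront lies on it. Two further nontrivial steps are then required: (i) centre manifolds are in general non-unique, so the paper proves directly that any two local centre manifolds through the equilibrium coincide (an integral inequality forcing their difference $\delta$ to vanish); (ii) on the centre manifold the reduced planar system still has a non-hyperbolic equilibrium, and uniqueness of the trajectory with $p(0^{+})=q(0^{+})=0$ is obtained by a separate monotonicity/comparison argument for $p$ as a function of $q$. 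Your ``one-dimensional invariant manifold aligned with $\lambda$'' does not exist in this picture, and without steps (i)--(ii) the uniqueness claim is unproved.
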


Further, we extend the solution to the positive half-line $[0, +\infty)$ and show that sufficiently large wave speeds yield a classical global wavefront.

\begin{theorem}\label{th-2}
Let $D_n=1$ and $D_b=D>0$. There exists $c_0>0$ such that, for every $c\geq c_0$, system~\eqref{eq-sys} has a  unique wavefront with wave speed $c$ whose profile $(\eta,\beta)$ satisfies~\eqref{eq-bc}.  If $c>c_0$, the wavefront is classical.
Moreover, the following estimates on $c_0$ hold:
\[
\max\left\{0,\sqrt{{D}/{15}}-1\right\}< c_0 \leq 2\sqrt{D e^D}.\]
\end{theorem}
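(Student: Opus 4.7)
The plan is to build a global wavefront by extending the semi-wavefront furnished by Theorem~\ref{th-1} across $\xi=0$, to characterize the set of admissible speeds as an upper half-line, and then to bound its endpoint $c_0$ from above and from below. For each $c>0$ denote by $(\eta_c,\beta_c)$ the unique semi-wavefront on $(-\infty,0]$ and by $(-\infty,\xi_c^\ast)$ its maximal forward continuation, with $\xi_c^\ast\in(0,+\infty]$ the first $\xi>0$ at which $\beta_c$ hits $0$. The monotonicities $\eta_c'>0,\beta_c'<0$ persist on the extension as long as $(\eta_c,\beta_c)\in(0,1)\times(0,1)$, so the one-sided limits $(\eta_c(\xi_c^{\ast-}),\beta_c(\xi_c^{\ast-}))$ exist in $[0,1]\times[0,1]$. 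Setting
\[
\mathcal{A}=\bigl\{\,c>0:\lim_{\xi\to\xi_c^{\ast-}}(\eta_c(\xi),\beta_c(\xi))=(1,0)\,\bigr\},\qquad c_0:=\inf\mathcal{A},
\]
continuous dependence on $c$ together with a monotone comparison of the trajectories should yield $\mathcal{A}=[c_0,+\infty)$, with closedness at the endpoint coming from compactness of the relatively compact family of monotone profiles. The remaining task then reduces to the two-sided estimate on $c_0$.

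For the upper bound I would exploit the first integral obtained by integrating~\eqref{eq-n}--\eqref{eq-b} once and using the limits at $-\infty$, namely
\[
\eta'(\xi)+D\,\eta(\xi)\beta(\xi)\beta'(\xi)=c\bigl(1-\eta(\xi)-\beta(\xi)\bigr).
\]
Combined with $0<\eta\leq 1$ and $\eta'>0$, this identity allows a Fisher--KPP-style super-solution construction on $\beta$ alone: an exponential ansatz $\overline{\beta}(\xi)=e^{-\lambda(\xi-\xi_0)}$ near $+\infty$, after the worst-case $\eta$-contribution in the diffusion term $D\eta\beta\beta'$ is absorbed into an integrating factor of size $e^{D}$, leads to a quadratic in $\lambda$ whose discriminant is non-negative exactly when $c\geq 2\sqrt{De^{D}}$. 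This yields $[2\sqrt{De^D},+\infty)\subseteq\mathcal{A}$ and hence the upper bound.

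For the lower bound the strategy is to analyze the local dynamics at $(1,0)$ after a desingularizing change of variables (for instance $w=\beta^{2}/2$, equivalently the time rescaling $\mathrm{d}\tau=\beta\,\mathrm{d}\xi$), which lifts the degeneracy of the diffusion and produces a regular finite-dimensional ODE. The linearization at $(\eta,w)=(1,0)$ carries a characteristic polynomial whose discriminant changes sign precisely at $c=\sqrt{D/15}-1$; for $c$ strictly below this value the eigenvalues along the relevant incoming directions become complex, the trajectory spirals around $(1,0)$, and $\beta$ is forced to become negative before reaching equilibrium, so such $c$ cannot belong to $\mathcal{A}$.

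Uniqueness and the classical nature of the profile for $c>c_0$ then follow from a central manifold reduction at the desingularized equilibrium $(1,0)$: the manifold of trajectories entering with the correct monotone sign is one-dimensional, so at most one wavefront is associated with each admissible $c$; for $c>c_0$ the entry eigenvalue is real and negative, yielding exponential (hence smooth) decay of $\beta$ to $0$, whereas at the threshold $c=c_0$ the degeneracy of the eigenstructure is precisely what allows $\beta$ to touch $0$ in finite $\xi$ and produce the sharp profile. The main obstacle in the whole program is the upper bound: because both the diffusion and the reaction involve the coupled quantity $\eta\beta$, obtaining a genuinely scalar super-solution for $\beta$ requires a careful tracking of the worst-case $\eta$-behaviour on the whole real line while preserving the prescribed limits at $\pm\infty$, and it is this step that produces the non-standard $e^{D}$ factor in the estimate for $c_0$.
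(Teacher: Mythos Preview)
Your overall architecture---extend the semi-wavefront, show the admissible set is an upper half-line, bound the threshold---is sound and matches the paper's strategy in outline. Two specific steps, however, will not work as you have sketched them.

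\medskip
\textbf{The lower bound.} Your claim that $\sqrt{D/15}-1$ arises as the zero of a discriminant of the linearized, desingularized flow at $(\eta,\beta)=(1,0)$ is incorrect. No characteristic polynomial attached to that equilibrium produces this number; the factor $1/15$ has nothing to do with eigenvalues. In the paper the bound is an \emph{energy-type integral estimate}: one multiplies~\eqref{eq-b} first by $\beta$, then by $D\eta\beta\beta'$, integrates over the whole line, and combines the two identities with the pointwise inequality $\eta\geq L(c)(1-\beta)$ (Proposition~\ref{prop-bound}). The constant $1/30=\int_0^1 y^2(1-y)^2\,\mathrm{d}y$ appears from that last substitution, and $(c/L(c))^2\geq D/15$ then yields the stated lower bound. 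A local/linear analysis at $(1,0)$ cannot see this global constraint.

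\medskip
\textbf{Uniqueness and the role of the center manifold.} You place the center-manifold reduction at $(1,0)$. The paper does it at the \emph{other} end: uniqueness is proved for the semi-wavefront on $(-\infty,0]$ (Theorem~\ref{th-1}, via Proposition~\ref{prop-uniq}) by analyzing a desingularized three-dimensional system at the equilibrium corresponding to $(\eta,\beta)=(0,1)$. Once the semi-wavefront is unique, its forward ODE continuation is unique automatically, so no manifold argument at $(1,0)$ is needed. Your ``entry eigenvalue real for $c>c_0$'' explanation of classicality is likewise replaced in the paper by a first-order reduction $z_c(\beta)=D\eta\beta\beta'$ and a comparison with an auxiliary scalar problem (Proposition~\ref{p:zbeta0}), which forces $\dot z_c(0)=0$ whenever $c>c_0$ and hence $\tau=+\infty$.

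\medskip
Your upper-bound heuristic is closer to the mark: the $e^D$ indeed comes from the a priori estimate $\eta(\tau^-)\leq\sqrt{e^D}$ (Proposition~\ref{p:equilim}), after which $DN^2(\beta)\beta^2\leq De^D\beta^2$ and a standard KPP threshold comparison for the scalar problem~\eqref{eq-w} gives $c_0\leq 2\sqrt{De^D}$. But you should make explicit that this bound on $\eta$ is what converts the coupled problem into a genuinely scalar one, rather than invoking a vague ``worst-case $\eta$ absorbed into an integrating factor.''
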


Drawing parallels with the scenario when $D=0$, we conjecture the existence of a threshold wave speed for which a sharp wavefront for system~\eqref{eq-sys} can be observed (see~\cite{MHST-24, SMGA-01}).

\medskip
The paper is organized as follows. Section~\ref{section-3} presents preliminary properties of systems~\eqref{eq-sys-1}--\eqref{eq-bc} that are essential for further analysis. This section also discusses further formulation of the system provided by equation~\eqref{eq-bis}, which is ensured by the boundary conditions and the coupling of the reaction term. The proofs of Theorems~\ref{th-1} and~\ref{th-2} are detailed in Sections~\ref{section-4} and~\ref{section-5}, respectively.

\section{Preliminary proprieties}\label{section-3}

We first describe some properties that wavefront or semi-wavefronts of~\eqref{eq-sys} must have. To simplify expressions, the following notation is introduced:
\begin{equation}\label{eq-0}
\eta_0:=\eta(0),\quad \eta'_0:=\eta'(0), \qquad \beta_0:=\beta(0), \quad \beta'_0:=\beta'(0).
\end{equation}
Let $(\eta,\beta)$ be a solution of~\eqref{eq-sys-1}. It is straightforward to demonstrate that, for every $\xi_0 \in \mathbb{R}$, the shifted pair $\left(\eta(\cdot+\xi_0),\beta(\cdot+\xi_0)\right)$ remains a solution of~\eqref{eq-sys-1}.
So, with no loss of generality, in the following, we always assume that
\[
\beta_0>0.
\]
Furthermore, if $(\eta,\beta)$ is a wavefront of~\eqref{eq-sys} or a semi-wavefront of~\eqref{eq-sys} (satisfying either~\eqref{eq-bc1} or~\eqref{eq-bc2}), then according to Definition~\ref{def-wave}, the function $\beta$ is non-constant and monotone decreasing. If, moreover, $0$ belongs to the domain $J$ of $\beta$, then the following quantity
\begin{equation}\label{eq-tau}
\tau:=\sup\{\xi>0\colon\beta(\xi)>0\}.
\end{equation}
is straightforwardly well-defined. It is also easy to verify that $\tau\in\mathbb{R}\cup\{+\infty\}$.

The following lemma provides qualitative properties on wavefronts of~\eqref{eq-sys}.

\begin{lemma}\label{lem-1}
Every wavefront $(\eta,\beta)$ of~\eqref{eq-sys} with wave speed $c$ satisfies:
\begin{enumerate}[nosep,parsep=2pt,wide=0pt,  labelwidth=20pt, align=left]
\item[$(i)$] $\eta'(-\infty)=0$ and $\eta(-\infty)\beta'(-\infty)=0$;
\item[$(ii)$] $\eta'(+\infty)=0$ and $\beta(\tau^-)\beta'(\tau^-)=0$;
\item[$(iii)$] $c=\displaystyle\int_{-\infty}^{\tau}\eta(s)\beta(s)\, \mathrm{d}s>0$;
\item[$(iv)$] $0<\eta'(\xi)<c$ and $0<\eta(\xi)<1$, $\forall\xi\in\mathbb{R}$;
\item[$(v)$] $\beta'(\xi)<0$ and $0<\beta(\xi)<1$, $\forall\xi\in(-\infty,\tau)$.
\end{enumerate}
\end{lemma}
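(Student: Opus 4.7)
My plan is to exploit the linear structure of \eqref{eq-n} in $\eta$ and the conservation form of \eqref{eq-b}, together with the monotonicity built into Definition~\ref{def-wave}. I would first fix the basic $L^\infty$ bounds $0\le \eta,\beta\le 1$: since $\beta$ is required to be monotone between the values $1$ and $0$, $\beta\in[0,1]$ automatically, and for $\eta$ an interior maximum above $1$ (respectively, minimum below $0$) would, via \eqref{eq-n} evaluated at the extremum, force $\eta\beta$ to have the wrong sign relative to $\eta''$, leading to a contradiction.

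For $(i)$ and the first half of $(ii)$, the key observation is that \eqref{eq-n} rearranges to $(e^{c\xi}\eta')'=e^{c\xi}\eta\beta$; combined with boundedness of $\eta,\beta$ and the finite limits of $\eta$ at $\pm\infty$, this yields, after a Barbalat-type control of $\eta'$, the representation
\[
\eta'(\xi)=e^{-c\xi}\int_{-\infty}^{\xi}e^{cs}\eta(s)\beta(s)\,\mathrm{d}s,
\]
from which $\eta'(\pm\infty)=0$ follows (by dominated convergence at $-\infty$, where $\eta$ decays exponentially via linearisation of \eqref{eq-n} around the rest state, and by a L'H\^opital argument at $+\infty$ using $\eta\beta\to 0$). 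The identities $\eta(-\infty)\beta'(-\infty)=0$ and $\beta(\tau^-)\beta'(\tau^-)=0$ are obtained from the once-integrated form of \eqref{eq-b},
\[
D\eta(\xi)\beta(\xi)\beta'(\xi)+c\beta(\xi)+\int_{-\infty}^{\xi}\eta(s)\beta(s)\,\mathrm{d}s = \text{const},
\]
where the constant is pinned down by letting $\xi\to -\infty$: the exponential decay of $\eta$ forces $\eta\beta\beta'\to 0$ at $-\infty$, and since $\beta(-\infty)=1$ one obtains $\eta\beta'\to 0$ there; at $\tau$ a parallel argument using $\beta(\tau^-)=0$ (if $\tau<+\infty$) or the monotonicity $\beta\to 0$ (if $\tau=+\infty$) delivers $\beta\beta'\to 0$.

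For $(iii)$, integrating \eqref{eq-n} over $\mathbb{R}$,
\[
[\eta']_{-\infty}^{+\infty}+c\,[\eta]_{-\infty}^{+\infty}=\int_{-\infty}^{+\infty}\eta(\xi)\beta(\xi)\,\mathrm{d}\xi,
\]
and using $(i)$, $(ii)$ together with $\beta\equiv 0$ on $[\tau,+\infty)$ yields the stated formula $c=\int_{-\infty}^{\tau}\eta\beta\,\mathrm{d}s$; strict positivity follows because $\eta,\beta\ge 0$ and $\eta\beta$ cannot vanish identically, as a patching argument at the interface between a region where $\eta\equiv 0$ and one where $\beta\equiv 0$ would break $\eta\in C^2$. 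Items $(iv)$ and $(v)$ are then immediate from the integral representation above: $\eta'(\xi)>0$ because $\eta\beta>0$ on an open set to the left of every $\xi$, and $\eta'(\xi)<c$ because $e^{cs}<e^{c\xi}$ strictly for $s<\xi$ on that same open set and $\int\eta\beta=c$. The bound $0<\eta<1$ follows by integration and the boundary data, while $\beta'<0$ on $(-\infty,\tau)$ is obtained by contradiction: if $\beta$ were constant on a subinterval, \eqref{eq-b} would give $\eta\beta\equiv 0$ there, hence $\eta\equiv 0$ there by the definition of $\tau$, and the Cauchy problem for \eqref{eq-n} with $\eta=\eta'=0$ at a point would propagate this to $\eta\equiv 0$ on $\mathbb{R}$, contradicting $\eta(+\infty)=1$.

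The main obstacle is the pair of identities in $(i)$ and $(ii)$ involving the degenerate products $\eta(-\infty)\beta'(-\infty)$ and $\beta(\tau^-)\beta'(\tau^-)$. Because the diffusion coefficient $D\eta\beta$ in \eqref{eq-b} vanishes precisely where we want asymptotic information, $\beta'$ itself may blow up near $-\infty$ (where $\eta\to 0$) and near $\tau$ (where $\beta\to 0$), so one cannot reason pointwise about $\beta'$ at these endpoints. The argument must instead be carried out through the integrated (weak) form of \eqref{eq-b}, exploiting the exponential rate of decay of $\eta$ near $-\infty$ extracted from \eqref{eq-n} and the continuity of $\beta$ at $\tau$. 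This is the only place where the degeneracy of the diffusion really bites in the present lemma, and it foreshadows the more substantial difficulties one must handle in Theorems~\ref{th-1} and~\ref{th-2}.
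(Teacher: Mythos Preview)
Your overall strategy---integrate \eqref{eq-n} and \eqref{eq-b}, use the resulting integral representations, and exploit monotonicity---is the same as the paper's. Two steps, however, are genuine gaps.

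\medskip
\textbf{The limit $\eta(-\infty)\beta'(-\infty)=0$.} You assert that ``the exponential decay of $\eta$ forces $\eta\beta\beta'\to 0$ at $-\infty$'', but at this stage no bound whatsoever on $\beta'$ is available (a priori $\beta'$ could blow up like $1/\eta$), and the exponential rate for $\eta$ has not been proved either---the linearisation you invoke is only heuristic. What integrating \eqref{eq-b} on a finite interval $[\xi,0]$ actually gives is that $\eta\beta\beta'$ has a \emph{limit} as $\xi\to-\infty$. The paper then identifies that limit as $0$ by contradiction: since $\beta(-\infty)=1$, the quantity $\eta\beta'$ tends to some $l\le 0$, and if $l<0$ then $\beta'=(\eta\beta')/\eta\to l/0^{+}=-\infty$, which is incompatible with $\beta$ bounded. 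No decay rate for $\eta$ is needed.

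\medskip
\textbf{Strict negativity $\beta'(\xi)<0$.} Your argument only excludes $\beta$ being constant on an open subinterval; it does not rule out an isolated zero $\beta'(\xi_3)=0$ (think of $-\xi^3$ at the origin). The paper handles this via $\Lambda(\xi):=D\eta(\xi)\beta(\xi)\beta'(\xi)$: at such a $\xi_3$ one has $\Lambda(\xi_3)=0$, while \eqref{eq-b} gives $\Lambda'(\xi_3)=-c\beta'(\xi_3)-\eta(\xi_3)\beta(\xi_3)<0$ (using $\eta>0$ from $(iv)$). Hence $\Lambda>0$, i.e.\ $\beta'>0$, immediately to the left of $\xi_3$, contradicting the monotonicity of $\beta$.

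\medskip
Two smaller remarks. First, the maximum-principle argument for $0\le\eta\le 1$ is unnecessary: monotonicity of $\eta$ is part of Definition~\ref{def-wave}, so the bound is immediate from the boundary values, as the paper notes. Second, be careful not to invoke the representation $\eta'(\xi)=e^{-c\xi}\int_{-\infty}^{\xi}e^{cs}\eta(s)\beta(s)\,\mathrm{d}s$ (whose convergence uses $c>0$) before $(iii)$ is established; the paper first obtains $\eta'(-\infty)=0$ by integrating \eqref{eq-n} on the \emph{finite} interval $[\xi,0]$ and using only the boundedness of $\eta$, and only after proving $c>0$ in $(iii)$ deploys the exponential-weight representation in $(iv)$.
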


\begin{proof}
By Definition~\ref{def-wave}, we notice that $\eta$ and $\beta$ are monotone functions. Thus, they are bounded functions thanks to the boundary conditions~\eqref{eq-bc}, namely $\eta(\xi)\in[0,1]$ and $\beta(\xi)\in[0,1]$, for every $\xi\in\mathbb{R}$. By integrating~\eqref{eq-n} in $[\xi,0]$
with $\xi<0$, we have
\begin{equation}\label{eq:relation1}
\eta'(\xi)=\eta'_0+c\left(\eta_0-\eta(\xi)\right)-\int^0_\xi \eta(s)\beta(s)\, \mathrm{d}s,
\end{equation}
and so we deduce that $\eta'$ has limit as $\xi\to-\infty$. Since $\eta$ is bounded, then $\eta'(-\infty)=0$. Similarly, by integrating~\eqref{eq-b} in $[\xi,0]$ with $\xi<0$, we have
\[
D\eta(\xi)\beta(\xi)\beta'(\xi)=D\eta_0\beta_0\beta'_0+c\left(\beta_0-\beta(\xi)\right)+\int^0_\xi \eta(s)\beta(s)\, \mathrm{d}s,
\]
and so we deduce that $\eta(\xi)\beta(\xi)\beta'(\xi)$ has limit as $\xi\to-\infty$. Hence, $\eta(\xi)\beta'(\xi)$ has limit as $\xi\to-\infty$ because $\beta(-\infty)=1$. If, by contradiction $\eta(-\infty)\beta'(-\infty)=l<0$, then
\[
\eta(-\infty)\frac{\beta'(-\infty)}{\eta(-\infty)}=\frac{l}{\eta(-\infty)}=-\infty,
\]
which is a contradiction since $\beta$ is bounded. Thus, we have $\eta(-\infty)\beta'(-\infty)=0$ and statement $(i)$ is proved. 

The proof that $\eta'(+\infty)=0$ is analogous and also that $\beta(+\infty)\beta'(+\infty)=0$ when $\tau=+\infty$. Instead, when $\tau<+\infty$, we take $\psi \in C_0^{\infty}(\tau-\varepsilon, \tau+\varepsilon)$ for some $\varepsilon >0$ with $\psi(\tau) \ne 0$ and $0<\delta<\varepsilon$. By Definition~\ref{def-wave} we have that
\[\begin{split}
0=&\int_{\tau-\varepsilon}^{\tau+\varepsilon} \left[(D\eta(\xi)\beta(\xi)\beta'(\xi)+c\beta(\xi))\psi'(\xi)-\eta(\xi)\beta(\xi)\psi(\xi) \right]\, \mathrm{d}\xi\\
=&\displaystyle \lim_{\delta \to 0^+} \left[(D\eta(\xi)\beta(\xi)\beta'(\xi)+c\beta(\xi))\psi(\xi)  \right]_{\tau-\varepsilon}^{\tau-\delta}\\
& -\int_{\tau-\varepsilon}^{\tau-\delta} \left[\left( D\eta(\xi)\beta(\xi)\beta'(\xi) \right)' +c\beta'(\xi)+\eta(\xi)\beta(\xi) \right]\psi(\xi)\, \mathrm{d}\xi.
\end{split}\]
Since $\psi(\tau-\varepsilon)=0$ we get
\[
\lim_{\delta \to 0^+}\left[(D\eta(\tau-\delta)\beta(\tau-\delta)\beta'(\tau-\delta)+c\beta(\tau-\delta))\right]\psi(\tau-\delta)=0,
\]
implying
\[
\lim_{\xi \to \tau^- }\beta(\xi)\beta'(\xi)=0.
\]
This proves~$(ii)$.

To prove $(iii)$, let us integrate \eqref{eq-n} in $[\xi_0, \xi]$ and pass to the limit when $\xi_0 \to -\infty$; according to~$(i)$  we obtain that
\begin{equation}\label{e:etaprime}
\eta'(\xi)+c\eta(\xi)=\int_{-\infty}^{\xi}\eta(s)\beta(s) \, \mathrm{d}s.
\end{equation}
Hence, by~$(ii)$ and~\eqref{eq-bc2}, when passing to the limit in~\eqref{e:etaprime} as $\xi \to +\infty$, we obtain
\[
c=\int_{-\infty}^{+\infty}\eta(s)\beta(s) \, \mathrm{d}s=\int_{-\infty}^{\tau}\eta(s)\beta(s) \, \mathrm{d}s,
\]
proving~$(iii)$.

To prove the first property in $(iv)$, we observe that
\[\left(\eta''(\xi)+c\eta'(\xi)-\eta(\xi)\beta(\xi)\right)e^{c\xi}= \left(\eta'(\xi)e^{c\xi}\right)'-\eta(\xi)\beta(\xi)e^{c\xi}=0,\quad \forall\xi\in\mathbb{R}.
\]
Then, thanks to~$(i)$, we have
\begin{equation}\label{e:etader}
\eta'(\xi)=e^{-c\xi}\int_{-\infty}^{\xi}\eta(s)\beta(s)e^{cs}\,\mathrm{d}s.
\end{equation}
The case $\eta(\xi)\equiv 0$ in $(-\infty, \xi)$ is not possible because it implies $\eta=0$, in contradiction with Definition~\ref{def-wave}. Hence  we obtain that $\eta'(\xi)>0$, for every $\xi\in\mathbb{R}$. Now we prove that $0<\eta(\xi)<1$, for every $\xi\in\mathbb{R}$. Let us suppose by contradiction that there is $\xi_1\in\mathbb{R}$ such that $\eta(\xi_1)=0$. Since $\eta(\xi)\geq0$, for every $\xi\in\mathbb{R}$, then $\eta'(\xi_1)=0$, which is a contradiction and so $\eta(\xi)>0$, for every $\xi\in\mathbb{R}$. In a similar way, we can exclude the existence  of $\xi_2\in\mathbb{R}$ such that $\eta(\xi_2)=1.$ At last by~\eqref{e:etaprime} the function $\eta'+c\eta$ is increasing in $\mathbb{R}$ and so, by~$(ii)$ and~\eqref{eq-bc1}, we obtain $\eta'(+\infty)+c\eta(+\infty)=c.$ This, using~$(i)$, implies $\eta'(\xi)\leq c(1-\eta(\xi))<c$ for all $\xi\in \mathbb{R}$. Accordingly, $(iv)$ is  proved.

To prove the first property in $(v)$, we notice that $\beta$ is monotone decreasing due to Definition~\ref{def-wave} and boundary conditions~\eqref{eq-bc}. Thus, let us suppose by contradiction that $\beta'(\xi_3)=0$, for some $\xi_3<\tau$. By introducing the function $\Lambda\colon\mathbb{R}\to\mathbb{R}$ defined as
\begin{equation}\label{eq-lambda}
\Lambda(\xi)=D\eta(\xi)\beta(\xi)\beta'(\xi),
\end{equation}
we have $\Lambda(\xi_3)=0$ and $\Lambda'(\xi_3)<0$, thanks to~$(iv)$. This way, we obtain that $\Lambda$ is a strictly decreasing function in a neighborhood of $\xi_3.$ Moreover, $\mathrm{sgn}\left(\Lambda(\xi)\right)=\mathrm{sgn}\left(\beta'(\xi)\right)$ in a neighbourhood of $\xi_3.$ Then, we deduce that $\beta'>0$ in a left-neighbourhood of $\xi_3$, which is in contradiction with the monotonicity of $\beta$. We are left to prove that $\beta(\xi)<1$, for every $\xi<\tau.$ Thus, by contradiction, we assume that there is $\xi_4<\tau$ such that $\beta(\xi_4)=1$; then it follows $\beta(\xi)=1$, for every $\xi\leq\xi_4$. Consider $\xi_5 < \xi_4$, then $\beta(\xi) = 1$ for all $\xi \in (\xi_5 - \varepsilon, \xi_5 + \varepsilon)$ with $\varepsilon > 0$. According to Definition~\ref{def-wave}, $\beta$ satisfies~\eqref{eq-weak}. Thus, given $\psi \in C^\infty_0(\xi_5 - \varepsilon, \xi_5 + \varepsilon)$,  without loss of generality, we assume that $\psi<0$. Using~$(iv)$, we have
\[
0 = \int_{\xi_5 - \varepsilon}^{\xi_5 + \varepsilon} \left[D \eta(\xi) \beta(\xi) \beta'(\xi) + c \beta(\xi) \psi'(\xi) - \eta(\xi) \beta(\xi) \psi(\xi)\right] \, d\xi = \int_{\xi_5 - \varepsilon}^{\xi_5 + \varepsilon} -\eta(\xi) \psi(\xi) \, d\xi>0,
\]
which is a contradiction. Hence, $0<\beta(\xi)<1,$ for all $\xi \in (-\infty, \tau)$ and the proof is concluded.
\end{proof}

\begin{remark} \label{r:semi-w}
Let $(\eta, \beta)$ be a semi-wavefront of~\eqref{eq-sys} in $(-\infty, \tau)$ for some $c>0$. It is evident from the proof that it fulfills the properties stated in Lemma~\ref{lem-1}~$(i)$. Furthermore, for all $\xi<\tau$, we have $\eta(\xi)>0$, $\eta'(\xi)>0$, $0<\beta(\xi)<1$, and $\beta'(\xi)<0$, and when $\tau$ is finite, $\eta'(\tau)>0$.
Additionally, in Remark~\ref{rem-yy}, we will demonstrate that $\beta'(\tau^-) < 0$ is always satisfied when $\tau$ is finite.
\end{remark}

The following lemma highlights an essential property of wavefronts of~\eqref{eq-sys} or semi-wavefronts of~\eqref{eq-sys} on $(-\infty, \tau)$, which plays a significant role in the subsequent analysis.

\begin{lemma}\label{lemma-2}
If $(\eta, \beta)$ is a semi-wavefront of~\eqref{eq-sys} in $(-\infty, \tau)$ for some $c>0$,
then $\beta \in C^1(-\infty, \tau)$ and
\begin{equation}\label{eq-bis}
D\eta(\xi)\beta(\xi)\beta'(\xi)+c(\beta(\xi)-1)+\eta'(\xi)+c\eta(\xi)=0, \quad \forall \xi\in(-\infty,\tau).
\end{equation}
Conversely, if $(\eta,\beta)$ is a solution of \eqref{eq-n}--\eqref{eq-bis} in $(-\infty,\tau)$ satisfying~\eqref{eq-bc1} for some $c>0$, then $(\eta, \beta)$ is a semi-wavefront of~\eqref{eq-sys} in $(-\infty, \tau)$.
\end{lemma}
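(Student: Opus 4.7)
My plan is to derive \eqref{eq-bis} as a first integral of the system on $(-\infty,\tau)$, identify the integration constant by passing to $\xi\to -\infty$, and then read the $C^1$-regularity of $\beta$ off the resulting identity. For the forward implication, I start from the weak formulation \eqref{eq-weak} and use \eqref{eq-n} to rewrite the reaction term as $\eta\beta = \eta'' + c\eta'$. Since $\eta\in C^2$ and $\psi\in C_0^\infty$, an integration by parts (with vanishing boundary terms) converts the $\eta\beta\psi$ contribution into $-(\eta'+c\eta)\psi'$ under the integral, giving
\[
\int_{-\infty}^\tau \bigl[D\eta(\xi)\beta(\xi)\beta'(\xi)+c\beta(\xi)+\eta'(\xi)+c\eta(\xi)\bigr]\psi'(\xi)\,\mathrm{d}\xi = 0
\]
for every $\psi\in C_0^\infty(-\infty,\tau)$. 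By the du~Bois-Reymond lemma, the bracketed quantity, call it $H(\xi)$, agrees a.e.\ on $(-\infty,\tau)$ with a constant.

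Since $c\beta+\eta'+c\eta$ is continuous, the constancy of $H$ forces $D\eta\beta\beta'$ to coincide a.e.\ with a continuous function; Remark~\ref{r:semi-w} guarantees $\eta(\xi),\beta(\xi)>0$ on $(-\infty,\tau)$, so dividing yields a continuous representative of $\beta'$ on $(-\infty,\tau)$, i.e.\ $\beta\in C^1(-\infty,\tau)$. With $H$ now classically continuous, the constant is identified by passing to $\xi\to-\infty$: by Remark~\ref{r:semi-w} the conclusions of Lemma~\ref{lem-1}$(i)$ carry over to semi-wavefronts, so $\eta(-\infty)=0$, $\eta'(-\infty)=0$, $\beta(-\infty)=1$ and $\eta(-\infty)\beta'(-\infty)=0$, which together with $\beta(-\infty)=1$ give $\eta(-\infty)\beta(-\infty)\beta'(-\infty)=0$. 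Hence $H(-\infty)=c$ and $H\equiv c$, which is precisely \eqref{eq-bis}.

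For the converse, assume $(\eta,\beta)$ solves \eqref{eq-n}--\eqref{eq-bis} on $(-\infty,\tau)$ with \eqref{eq-bc1}. Multiplying \eqref{eq-bis} by $\psi'\in C_0^\infty(-\infty,\tau)$ and integrating, the term $c\int \psi'\,\mathrm{d}\xi$ vanishes; integrating by parts the contribution $(\eta'+c\eta)\psi'$ and invoking \eqref{eq-n} in the form $\eta''+c\eta'=\eta\beta$ reproduces $\int\eta\beta\psi\,\mathrm{d}\xi$, so \eqref{eq-weak} holds. The monotonicity and $L^1_{loc}$-regularity demanded by Definition~\ref{def-wave}$(ii)$ follow readily from the pointwise identity \eqref{eq-bis} and the signs of the various factors on $(-\infty,\tau)$. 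The only genuinely delicate point is the regularity management in the forward direction: a priori $\beta\beta'\in L^1_{loc}$ only, so derivatives must be kept on the test function $\psi$ until the constancy of $H$ is available to bootstrap $\beta$ to $C^1$; once that bootstrap is secured, the rest is bookkeeping.
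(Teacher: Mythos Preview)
Your proof is correct and essentially the paper's: both recognise \eqref{eq-bis} as a first integral of the coupled system and fix the constant from the behaviour at $-\infty$, the only difference being that the paper first observes $(D\eta\beta\beta')'\in L^1_{\mathrm{loc}}$ to get $\beta'$ continuous and then integrates classically on $[\xi_0,\xi]$ before sending $\xi_0\to-\infty$, whereas you stay in the weak formulation, invoke du Bois--Reymond, and read off the $C^1$-regularity of $\beta$ afterwards from the resulting identity. The converse is handled equivalently (the paper simply differentiates \eqref{eq-bis} and uses \eqref{eq-n}; you undo your integration by parts).
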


\begin{proof}
By Definition~\ref{def-wave}, if $(\eta, \beta)$ is a semi-wavefront of~\eqref{eq-sys} on $(-\infty, \tau)$, then it follows $D(\eta(\xi)\beta(\xi)\beta'(\xi))'=-c\beta'(\xi)-\eta''(\xi)-c\eta'(\xi) \in L^1_{loc}$. Thus the function $\xi \mapsto D\eta(\xi)\beta(\xi)\beta'(\xi)$ is continuous in $(-\infty, \tau).$ By Remark~\ref{r:semi-w}, since $\beta(\xi)>\beta(\tau)=0$, this implies that $\beta'$ is continuous in $(-\infty, \tau)$. Now, by taking into account~\eqref{eq-sys-1}, we have
\[
\eta''(\xi)+c\eta'(\xi)=\eta(\xi)\beta(\xi)=-\left(D\eta(\xi)\beta(\xi)\beta'(\xi)\right)'-c\beta'(\xi),\quad  \forall \xi\in(-\infty,\tau).
\]
By integrating in $[\xi_0,\xi]$ with $-\infty<\xi_0<\xi\leq\tau$, we obtain
\[
\int^{\xi}_{\xi_0}\left[\left(D \eta(s)\beta(s)\beta'(s)\right)'+c\beta'(s)+\eta''(s)+c\eta'(s)\right]\,\mathrm{d} s=0.
\]
By passing to the limit as $\xi_0\to-\infty$, the thesis follows thanks to~\eqref{eq-bc1}, Lemma~\ref{lem-1}~$(i)$ and Remark~\ref{r:semi-w}.
Differentiating~\eqref{eq-bis} and using~\eqref{eq-n}, the second statement of the lemma follows directly.
\end{proof}

\begin{remark}\label{r:betaC2}
Every $(\eta, \beta)$ semi-wavefront of~\eqref{eq-sys} in $(-\infty, \tau)$ for some $c>0$ satisfies $\beta \in C^2(-\infty, \tau)$.
In fact, in the proof of Lemma~\ref{lemma-2}, we showed that $\beta'$ is continuous in $(-\infty, \tau)$. By~\eqref{eq-bis} and the positivity of $\eta$ (see Remark~\ref{r:semi-w}), we have that
\[
\beta'(\xi)=\frac{c(1-\beta(\xi))}{D\eta(\xi)\beta(\xi)}-\frac{\eta'(\xi)}{D\eta(\xi)\beta(\xi)}-c\frac{1}{D\beta(\xi)}, \quad \forall\xi<\tau.
\]
At last, the conclusion follows thanks to the regularity of $\eta'$ (see Definition~\ref{def-wave}).
\end{remark}

The following lemma contains some further important properties satisfied by the semi-wavefronts $(\eta,\beta)$ of~\eqref{eq-sys} in $(-\infty, \tau)$ with $c>0$.

\begin{lemma}\label{l:limits}
If $(\eta, \beta)$ is a wavefront of~\eqref{eq-sys} in $(-\infty, \tau)$ for some $c>0$,
then
\begin{enumerate}[nosep,parsep=2pt,wide=0pt,  labelwidth=20pt, align=left]
\setlength{\baselineskip}{25pt}
\item[$(i)$] $\displaystyle \lim_{\xi \to -\infty}\frac{\eta'(\xi)}{\eta(\xi)}=\frac{2}{c+\sqrt{c^2+4}}$;
\item[$(ii)$] $\beta'(-\infty)=0$;
\item[$(iii)$] $\displaystyle \lim_{\xi \to -\infty}\frac{1-\beta(\xi)}{\eta(\xi)}=\frac{2}{c^2+c\sqrt{c^2+4}}+1$.
\end{enumerate}
\end{lemma}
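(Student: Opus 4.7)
The three statements describe the linearised dynamics of the system \eqref{eq-n}--\eqref{eq-bis} at the rest state $(\eta,\eta',\beta)=(0,0,1)$. The plan is to derive a scalar Riccati equation for $p:=\eta'/\eta$, pin down its limit at $-\infty$ via the equilibria of the autonomous limit equation, and then propagate the information to $\beta$ through \eqref{eq-bis}.

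For part (i), Remark~\ref{r:semi-w} ensures $\eta>0$ on $(-\infty,\tau)$, so $p:=\eta'/\eta$ is of class $C^{1}$ and positive. Dividing \eqref{eq-n} by $\eta$ yields
\[ p'(\xi)+p(\xi)^{2}+c\,p(\xi)=\beta(\xi),\qquad \xi\in(-\infty,\tau), \]
whose limit as $\xi\to-\infty$ (where $\beta\to1$) has equilibria $\lambda_{\pm}=(-c\pm\sqrt{c^{2}+4})/2$. Once existence of $L:=\lim_{\xi\to-\infty}p(\xi)$ is established, \eqref{e:etaprime} together with L'Hôpital identifies it: writing $B(\xi):=\int_{-\infty}^{\xi}\eta\beta\,\mathrm{d}s$ one has $p+c=B/\eta$ with $B,\eta\to 0$, so
\[ L+c=\lim_{\xi\to-\infty}\frac{B(\xi)}{\eta(\xi)}=\lim_{\xi\to-\infty}\frac{\eta(\xi)\beta(\xi)}{\eta'(\xi)}=\frac{1}{L}, \]
which, together with $L>0$, forces $L=\lambda_{+}=2/(c+\sqrt{c^{2}+4})$. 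To justify the existence of the limit I would use the substitution $\eta(\xi)=e^{\lambda_{+}\xi}u(\xi)$, converting \eqref{eq-n} into the linear equation
\[ u''+\mu\,u'+(1-\beta)\,u=0,\qquad \mu:=2\lambda_{+}+c=\sqrt{c^{2}+4}>0, \]
with $u>0$ and $1-\beta\ge 0$ decaying to zero. Since $(e^{\mu\xi}u')'=-e^{\mu\xi}(1-\beta)u\le 0$, the function $e^{\mu\xi}u'$ is monotone; combined with asymptotically autonomous linear theory, $u$ is asymptotic at $-\infty$ either to a positive constant $A$ or to the unbounded mode $B\,e^{-\mu\xi}$. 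The latter contradicts $\eta\to 0$ (it would give $\eta\sim B\,e^{-(\lambda_{+}+c)\xi}\to+\infty$ as $\xi\to-\infty$), so $u\to A>0$ and hence $p\to\lambda_{+}$.

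For parts (ii) and (iii), divide \eqref{eq-bis} by $\eta$ and rearrange:
\[ D\,\beta(\xi)\beta'(\xi)=c\,\frac{1-\beta(\xi)}{\eta(\xi)}-\frac{\eta'(\xi)}{\eta(\xi)}-c. \]
By (i), the second term on the right converges to $\lambda_{+}$. I would establish that $r(\xi):=(1-\beta(\xi))/\eta(\xi)$ admits a finite limit $r_\infty$ as $\xi\to-\infty$ by applying L'Hôpital to the indeterminate form $(1-\beta)/\eta$ and using the displayed identity to express the ratio $-\beta'/\eta'$ purely in terms of $r$ and $p$; the resulting self-consistency forces $r_\infty=(\lambda_{+}+c)/c$. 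Passing to the limit in the displayed identity then yields $\lim_{\xi\to-\infty}\beta'(\xi)=0$, which is (ii); and since $\lambda_{+}(\lambda_{+}+c)=1$, one computes $r_\infty=1/(c\lambda_{+})=(c+\sqrt{c^{2}+4})/(2c)=2/(c^{2}+c\sqrt{c^{2}+4})+1$ after rationalisation, which is (iii).

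The main obstacle is the existence of $\lim_{\xi\to-\infty}p(\xi)$ in part (i): the Riccati equation by itself does not distinguish between the equilibria $\lambda_{\pm}$ in backward $\xi$-time, and the selection of $\lambda_{+}$ must be enforced via the linear reformulation in $u$, where the boundary condition $\eta\to 0$ excludes the unbounded mode $e^{-\mu\xi}$. Once this asymptotic matching is in place, parts (ii) and (iii) reduce to straightforward L'Hôpital bookkeeping.
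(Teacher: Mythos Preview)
Your treatment of parts (ii) and (iii) contains a genuine gap. You propose to obtain the limit of $r:=(1-\beta)/\eta$ by L'H\^opital, claiming that the ratio $-\beta'/\eta'$ can be expressed ``purely in terms of $r$ and $p$''. It cannot: from \eqref{eq-bis} one has $\beta'=(cr-p-c)/(D\beta)$, while $\eta'=p\eta$, so
\[
\frac{-\beta'}{\eta'}=\frac{p+c-cr}{D\beta\,p\,\eta},
\]
and the factor $\eta$ in the denominator does not cancel. Since $\eta\to 0$, this quotient has a finite limit only if $p+c-cr\to 0$, i.e.\ only if $r\to(\lambda_++c)/c$ --- exactly what you are trying to prove. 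The ``self-consistency'' you invoke therefore presupposes the existence of $r_\infty$ rather than establishing it, and your derivation of (ii) from (iii) collapses with it. The paper avoids this circularity by proving (ii) \emph{first}, via an independent $\liminf/\limsup$ contradiction argument applied to $D\beta\beta'$ (using the differentiated form of \eqref{e:bbis2}); once $\beta'(-\infty)=0$ is known, (iii) follows immediately from \eqref{e:bbis2} and (i).

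For part (i), your route via the substitution $\eta=e^{\lambda_+\xi}u$ is a legitimate alternative to the paper's elementary $\liminf/\limsup$ argument, but the appeal to ``asymptotically autonomous linear theory'' is not free. The standard results (Levinson, Hartman--Wintner) that yield $u\sim A$ or $u\sim Be^{-\mu\xi}$ together with the corresponding behaviour of $u'$ require an integrability condition on the perturbation, here $1-\beta\in L^{1}(-\infty,0]$. That decay rate is precisely what (iii) encodes, so invoking it at this stage is again circular. The monotonicity of $e^{\mu\xi}u'$ that you observe does carry real information and can be pushed through by hand to exclude $M:=\lim_{\xi\to-\infty}e^{\mu\xi}u'(\xi)\neq 0$, but you would still need to show that $M=0$ forces $u'/u\to 0$; this is where the paper's direct argument on $p=\eta'/\eta$ is more economical.
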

\begin{proof}
To prove~$(i)$, we assume, by a contradiction, that such a limit does not exist. Therefore,  by using Lemma~\ref{lem-1}~$(iv)$, we have
\begin{equation}\label{e:contr1}
0\leq \ell_1:=\liminf_{\xi \to -\infty }\frac{\eta'(\xi)}{\eta(\xi)}<\limsup_{\xi \to -\infty }\frac{\eta'(\xi)}{\eta(\xi)}=:\ell_2\leq +\infty.
\end{equation}
For every $\lambda \in (\ell_1, \ell_2)$ there exist two sequences $(\xi_n)_n$ and $(\mu_n)_n$ such that $\xi_n\to -\infty,$ $\mu_n \to -\infty$ as $n \to +\infty$, and the following hold
\begin{align}
&\frac{\eta'(\xi_n)}{\eta(\xi_n)}=\frac{\eta'(\mu_n)}{\eta(\mu_n)}=\lambda, \quad \forall n\in\mathbb{N},\nonumber\\
&\left(\frac{\eta'(\xi)}{\eta(\xi)}\right)'\Big\vert_{\xi=\xi_n}>0, \quad \forall n\in\mathbb{N},\label{e:deriv-a}\\
&\left(\frac{\eta'(\xi)}{\eta(\xi)}\right)'\Big\vert_{\xi=\mu_n}<0, \quad\forall n\in\mathbb{N}.\label{e:deriv-b}
\end{align}
When computing the derivative in \eqref{e:deriv-a} we have that $\eta''(\xi_n)/\eta(\xi_n)>\lambda^2 $ for all $n$ and then, by  ~\eqref{eq-n},  $-c\lambda+\beta(\xi_n)>\lambda^2$; passing to the limit as $n \to +\infty$ we obtain that $\lambda^2+c\lambda-1\leq 0$. Since $\lambda$ is positive by~\eqref{e:contr1}, this implies
\[
0< \lambda \leq \frac{2}{c+\sqrt{c^2+4}}.
\]
Similarly, when considering \eqref{e:deriv-b}, we obtain
\[
\lambda \ge \frac{2}{c+\sqrt{c^2+4}}.
\]
It follows that $\displaystyle \lambda = \frac{2}{c+\sqrt{c^2+4}}$. This is in contradiction with $\lambda$ arbitrarily taken in $(\ell_1, \ell_2)$, so we have proved that the limit exists. Now, let
\[
\ell:=\lim_{\xi \to -\infty}\frac{\eta'(\xi)}{\eta(\xi)}.
\]
By Lemma~\ref{lem-1}~$(iv)$, we notice that $0\leq \ell\leq +\infty$. Due to
l'H\^ospital role, we have that
\[
\lim_{\xi \to -\infty}\frac{1}{\eta(\xi)}\int_{-\infty}^{\xi}\eta(s)\beta(s)\,\mathrm{d}s=\lim_{\xi \to -\infty}\frac{\eta(\xi)\beta(\xi)}{\eta'(\xi)}=\frac{1}{\ell}
\]
with the obvious meaning if $\ell=0^+$ or $\ell=+\infty$. By \eqref{e:etaprime}
\[
\ell=\lim_{\xi \to -\infty}\frac{\eta'(\xi)}{\eta(\xi)}=\lim_{\xi \to -\infty}-c+\frac{\int_{-\infty}^{\xi}\eta(s)\beta(s)\,\mathrm{d}s}{\eta(\xi)}=-c+\frac{1}{\ell}.
\]
This proves that $\ell \in (0, +\infty)$ and $\ell^2+c\ell-1=0$. Condition~$(i)$ is then satisfied.

To prove~$(ii)$, we assume, by a contradiction, that such a limit does not exist and assume the existence of $b_1, b_2$ satisfying
\[
b_1:=\liminf_{\xi \to -\infty }\beta'(\xi)<\limsup_{\xi \to -\infty }\beta'(\xi)=:b_2.
\]
By Remark \ref{r:semi-w}, we notice that $\beta'(\xi)<0$ for all $\xi\in(-\infty, \tau)$. This implies $-\infty\leq b_1<b_2\leq 0$. In addition $b_2=0$ since, otherwise, we could find $\varepsilon >0$ and $\xi_0<0$ such that $\beta'(\xi)<-\varepsilon,$ for $\xi \leq \xi_0$ in contradiction with the boundedness of $\beta$. Due to~\eqref{eq-bc1}, we have that
\[
Db_1=\liminf_{\xi \to -\infty }D\beta(\xi)\beta'(\xi)<\limsup_{\xi \to -\infty }D\beta(\xi)\beta'(\xi)=0.
\]
Hence, for every  $\lambda \in (Db_1, 0)$, there exists a sequence $(\xi_n)_n\subset (-\infty, \tau)$ with $\xi_n \to -\infty$ as $n \to +\infty$ such that
\[
D\beta(\xi_n)\beta'(\xi_n)=\lambda, \qquad \text{and} \qquad \left(D\beta(\xi)\beta'(\xi)  \right)'\Big\vert_{\xi=\xi_n}<0, \qquad \forall n.
\]
By writing~\eqref{eq-bis} as
\begin{equation}\label{e:bbis2}
D\beta(\xi)\beta'(\xi)=\frac{c(1-\beta(\xi))}{\eta(\xi)}-\frac{\eta'(\xi)}{\eta(\xi)}-c, \quad \xi\in(-\infty, \tau),
\end{equation}
we have, in the whole interval $(-\infty, \tau)$,
\[
\left(D\beta \beta'\right)'=-\frac{c\beta'}{\eta}-\frac{c(1-\beta)}{\eta}\frac{\eta'}{\eta}-\frac{\eta''}{\eta}+\left( \frac{\eta'}{\eta} \right)^2.
\]
According to \eqref{eq-n}, in $(-\infty, \tau)$, we obtain
\begin{equation}\label{e:bbisderiv}
\left(D\beta \beta'\right)'=-\frac{c\beta'}{\eta}-\frac{c(1-\beta)}{\eta}\frac{\eta'}{\eta}+\frac{c\eta'}{\eta}-\beta+\left( \frac{\eta'}{\eta} \right)^2.
\end{equation}
Moreover, if we restrict to the values of $(\xi_n)_n,$ thanks to \eqref{e:bbis2}, we have
\[
\beta'(\xi_n)=\frac{\lambda}{D\beta(\xi_n)}, \qquad \frac{c(1-\beta(\xi_n))}{\eta(\xi_n)}=\lambda+\frac{\eta'(\xi_n)}{\eta(\xi_n)}+c.
\]
Therefore, by \eqref{e:bbisderiv}, it follows
\[\begin{split}
0&>\left(D\beta(\xi) \beta'(\xi)\right)'\Big\vert_{\xi=\xi_n}\\
&=-\frac{c\lambda}{D\eta(\xi_n)\beta(\xi_n)}-\left[ \lambda + \frac{\eta'(\xi_n)}{\eta(\xi_n)}+c\right]\frac{\eta'(\xi_n)}{\eta(\xi_n)} +\frac{c\eta'(\xi_n)}{\eta(\xi_n)}-\beta(\xi_n)+\left( \frac{\eta'(\xi_n)}{\eta(\xi_n)}\right)^2.
\end{split}\]
Due to property~$(i)$ and~\eqref{eq-bc1}, when passing to the limit as $n \to +\infty$, we have that
\[
0\ge \lim_{n \to +\infty}D(\beta(\xi) \beta'(\xi))'|_{\xi=\xi_n}=+\infty,
\]
which is a contradiction. Thus, the limit $\beta'(-\infty)$ exists and is necessarily $0$, so~$(ii)$ is proved.

Property~$(iii)$ follows as an immediate consequence of~$(i)$ and~$(ii)$, thanks to~\eqref{e:bbis2}.
\end{proof}

\begin{proposition}\label{prop-bound}
Every wavefront $(\eta,\beta)$ of~\eqref{eq-sys} with wave speed $c$ satisfies:
\begin{equation}\label{eq-nb}
\eta(\xi)\geq L(c) \left(1-\beta(\xi)\right),\quad \forall \xi\in\mathbb{R},
\end{equation}
where $\displaystyle L(c):=\frac{c^2+c\sqrt{c^2+4}}{2+c^2+c\sqrt{c^2+4}}$.
\end{proposition}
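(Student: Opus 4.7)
The plan is to derive the inequality by sandwiching $\eta'+c\eta$ between two quantities. Set $\ell:=\frac{\sqrt{c^2+4}-c}{2}$, the positive root of $\lambda^2+c\lambda-1=0$, so that $\ell(c+\ell)=1$ and $c+\ell=1/\ell$; a direct algebraic check then gives $L(c)=c\ell$. Once I have both the pointwise upper bound $\eta'\le \ell\eta$ and the pointwise lower bound $\eta'+c\eta\ge c(1-\beta)$ on all of $\mathbb{R}$, chaining them yields
\[ c(1-\beta)\le \eta'+c\eta\le(\ell+c)\eta=\eta/\ell, \]
which rearranges to $\eta\ge c\ell(1-\beta)=L(c)(1-\beta)$.

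The lower bound $\eta'+c\eta\ge c(1-\beta)$ is essentially already contained in Lemma~\ref{lemma-2}. Indeed, on $(-\infty,\tau)$ equation~\eqref{eq-bis} rewrites as $\eta'+c\eta-c(1-\beta)=-D\eta\beta\beta'$, which is nonnegative thanks to Lemma~\ref{lem-1}~$(v)$ and the positivity of $\eta,\beta$. For the tail $[\tau,+\infty)$ when $\tau$ is finite, $\beta\equiv0$, and the identity~\eqref{e:etaprime} together with Lemma~\ref{lem-1}~$(iii)$ gives $\eta'+c\eta=c=c(1-\beta)$, so the inequality (with equality) extends to all of $\mathbb{R}$.

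The main task is therefore the upper bound $\eta'\le \ell\eta$. I set $\Phi:=\eta'-\ell\eta$, substitute $\eta''=\eta\beta-c\eta'$ from~\eqref{eq-n}, replace $\eta'$ by $\Phi+\ell\eta$, and use $\ell(c+\ell)=1$ to obtain the first-order linear equation
\[ \Phi'+\frac{1}{\ell}\Phi=-\eta(1-\beta). \]
Multiplying by the integrating factor $e^{\xi/\ell}$ produces $(e^{\xi/\ell}\Phi)'=-e^{\xi/\ell}\eta(1-\beta)\le 0$. Since $\eta$ and $\eta'$ are bounded on $\mathbb{R}$ by Lemma~\ref{lem-1}~$(iv)$, $\Phi$ is bounded, and because $e^{\xi/\ell}\to 0$ as $\xi\to-\infty$, the quantity $e^{\xi/\ell}\Phi(\xi)$ tends to $0$ at $-\infty$. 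Being nonincreasing with vanishing limit at $-\infty$, it must stay $\le 0$ throughout $\mathbb{R}$; hence $\Phi\le 0$, i.e.\ $\eta'\le \ell\eta$.

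The only genuinely calculational step I foresee is verifying the algebraic identities $L(c)=c\ell$ and $\ell(c+\ell)=1$ that connect the constant $L(c)$ appearing in the statement with the spectral quantity $\ell$. Beyond that, the argument relies only on~\eqref{eq-n}, Lemma~\ref{lemma-2}'s identity~\eqref{eq-bis}, and the bounds of Lemma~\ref{lem-1}; in particular it does not appeal to the sharper asymptotics of Lemma~\ref{l:limits}, although \textit{a posteriori} the bound $\eta'\le \ell\eta$ is consistent with them.
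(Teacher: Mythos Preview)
Your proof is correct, and the overall strategy coincides with the paper's: both establish the pointwise bound $\eta'\le\ell\eta$ (equivalently $\eta'/\eta\le \frac{2}{c+\sqrt{c^2+4}}$), and then combine it with the identity~\eqref{eq-bis} to obtain~\eqref{eq-nb}. The algebraic identities $\ell(c+\ell)=1$ and $L(c)=c\ell$ check out, and your handling of the tail $[\tau,+\infty)$ via~\eqref{e:etaprime} and Lemma~\ref{lem-1}\,$(iii)$ is clean.

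The genuine difference lies in \emph{how} you obtain $\eta'\le\ell\eta$. The paper argues by analyzing the critical points of $\eta'/\eta$: at any such point the value equals $\frac{2\beta}{c+\sqrt{c^2+4\beta}}\le\frac{2}{c+\sqrt{c^2+4}}$, and the endpoint behavior is supplied by Lemma~\ref{l:limits}\,$(i)$ at $-\infty$ and by $\eta'/\eta\to0$ at $+\infty$. Your route instead writes the linear first-order equation $\Phi'+\tfrac{1}{\ell}\Phi=-\eta(1-\beta)$ for $\Phi=\eta'-\ell\eta$ and uses an integrating factor together with the boundedness of $\Phi$ to conclude $e^{\xi/\ell}\Phi\le0$. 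This is more self-contained: it bypasses the asymptotic computation of Lemma~\ref{l:limits}\,$(i)$ entirely and needs only the sign information from Lemma~\ref{lem-1} and the boundedness of $\eta,\eta'$. The paper's argument, on the other hand, yields slightly more qualitative information along the way (the value of $\eta'/\eta$ at interior extrema), which could be useful elsewhere but is not needed for the present proposition.
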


\begin{proof}
Firstly, we claim that
\begin{equation}\label{eq-nnp}
\frac{\eta'(\xi)}{\eta(\xi)}\leq \frac{2}{c+\sqrt{c^2+4}}, \quad \forall \xi\in\mathbb{R}.
\end{equation}
To prove this assertion, we observe that the function ${\eta'}/{\eta}$ is always positive. Next, we find its critical points and, by referring to equation~\eqref{eq-n}, we deduce
\[
\left(\frac{\eta'}{\eta}\right)'(\xi)=0 \Leftrightarrow \left(\frac{\eta'(\xi)}{\eta(\xi)}\right)^2+c\frac{\eta'(\xi)}{\eta(\xi)}-\beta(\xi)=0.
\]
This condition is met when
\begin{equation}\label{eq-x1}
\frac{\eta'(\xi)}{\eta(\xi)}=\frac{2\beta(\xi)}{c+\sqrt{c^2+4\beta(\xi)}}.
\end{equation}
Given that the function
\[
\varphi(v)=\frac{2 v}{c+\sqrt{c^2+4v}}, \quad v\in[0,1],
\]
satisfies $\varphi'(v)>0$ for all $v\in[0,1]$, it follows that
\begin{equation}\label{eq-x2}
\frac{2\beta(\xi)}{c+\sqrt{c^2+4\beta(\xi)}}\leq \frac{2}{c+\sqrt{c^2+4}}, \quad \forall \xi\in\mathbb{R}.
\end{equation}
Employing Lemma~\ref{l:limits}~$(i)$ and noting that
\[\lim_{\xi\to+\infty}\frac{\eta'(\xi)}{\eta(\xi)}=0,\]
we conclude from \eqref{eq-x1} and \eqref{eq-x2} that the claim~\eqref{eq-nnp} is valid.

Applying equation~\eqref{eq-bis}, we derive that
\[
\frac{1-\beta(\xi)}{\eta(\xi)}=1+\frac{\eta'(\xi)}{c\eta(\xi)}+\frac{D\beta(\xi)\beta'(\xi)}{c}<1+\frac{\eta'(\xi)}{c\eta(\xi)}\leq 1+ \frac{2}{c^2+c\sqrt{c^2+4}},\quad \forall \xi\in\mathbb{R},
\]
which completes the proof.
\end{proof}

The following result provides a positive lower bound for wavefront speeds of the system~\eqref{eq-sys} under certain conditions on the diffusion coefficient $D$.

\begin{proposition}\label{est-c}
Every wavefront $(\eta,\beta)$ of~\eqref{eq-sys} with wave speed $c$ satisfies:
\[
c> \max\left\{0,\sqrt{{D}/{15}}-1\right\}.
\]
\end{proposition}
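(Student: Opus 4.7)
If $D\leq 15$ the conclusion reduces to $c>0$, which is Lemma~\ref{lem-1}$(iii)$; so I would assume $D>15$ and aim to establish the equivalent inequality $D<15(c+1)^2$. The plan rests on two ingredients: a pointwise control on $|\beta'|$ and a two-sided estimate of the auxiliary integral $M:=\int_{-\infty}^{\tau}\eta\beta(1-\beta)\,d\xi$.

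For the pointwise control, I would use \eqref{e:etaprime} together with Lemma~\ref{lem-1}$(iii)$ to obtain $\eta'(\xi)+c\eta(\xi)=\int_{-\infty}^{\xi}\eta\beta\,ds\leq c$ for every $\xi$. Rewriting \eqref{eq-bis} as $D\eta\beta|\beta'|=(\eta'+c\eta)-c(1-\beta)$ then yields $D\eta(\xi)|\beta'(\xi)|\leq c$ for $\xi\in(-\infty,\tau)$. Combining with Proposition~\ref{prop-bound} gives
\[
|\beta'(\xi)|\leq \frac{c}{D\,L(c)\,(1-\beta(\xi))},\qquad \xi\in(-\infty,\tau).
\]

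To estimate $M$ from above, multiplying \eqref{eq-b} by $\beta$ and integrating by parts (the boundary contributions vanish by Lemma~\ref{lem-1} and the analysis at $\tau$) yields the identity $\int_{-\infty}^\tau\eta\beta^2\,d\xi=c/2+\int_{-\infty}^\tau D\eta\beta(\beta')^2\,d\xi\geq c/2$; hence, using $\int_{-\infty}^\tau\eta\beta\,d\xi=c$, we get $M\leq c/2$. For the lower bound, Proposition~\ref{prop-bound} yields $\eta\beta(1-\beta)\geq L(c)\beta(1-\beta)^2$; then changing variables $\xi\leftrightarrow\beta$ (licit since $\beta$ is strictly monotone on $(-\infty,\tau)$) and applying the pointwise bound above,
\[
M\geq L(c)\int_0^1 \frac{\beta(1-\beta)^2}{|\beta'(\xi(\beta))|}\,d\beta\geq \frac{DL(c)^2}{c}\int_0^1 \beta(1-\beta)^3\,d\beta=\frac{DL(c)^2}{20c},
\]
since $\int_0^1 \beta(1-\beta)^3\,d\beta=1/20$.

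Combining the two estimates gives $DL(c)^2/(20c)\leq c/2$, whence $DL(c)^2\leq 10c^2$. A direct computation using $L(c)=c/(c+k)$ with $k=2/(c+\sqrt{c^2+4})<1$ for $c>0$ shows that $c/L(c)<c+1$, so $D<10(c+1)^2\leq 15(c+1)^2$, proving the claim. The main delicate point will be executing the change of variables rigorously and controlling the endpoint behaviour at $\tau$ (whether finite or $+\infty$); different choices of test integral would produce different constants, and the present choice actually yields the slightly stronger bound $c>\sqrt{D/10}-1$, which comfortably implies the stated one.
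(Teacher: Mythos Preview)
Your argument is correct and in fact yields a sharper constant than the paper's. The overall architecture, however, differs genuinely from the paper's proof. Both proofs start from the same energy identity obtained by multiplying \eqref{eq-b} by $\beta$ (the paper's \eqref{eq-est1}), and both finish via the observation $c/L(c)<c+1$. The divergence is in the middle step: the paper multiplies \eqref{eq-b} a second time, now by $D\eta\beta\beta'$, and integrates to obtain the identity $c\int D\eta\beta(\beta')^2\,d\xi=-\int D\eta^2\beta^2\beta'\,d\xi$; it then inserts $\eta^2\geq L(c)^2(1-\beta)^2$ into the right-hand side and changes variables to get $\int_0^1 y^2(1-y)^2\,dy=1/30$, yielding $DL(c)^2\leq 15c^2$. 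You instead extract the \emph{pointwise} bound $D\eta|\beta'|\leq c$ directly from \eqref{eq-bis} and \eqref{e:etaprime}, and then sandwich the single integral $M=\int\eta\beta(1-\beta)\,d\xi$ between $c/2$ (from \eqref{eq-est1}) and $DL(c)^2/(20c)$ (via the change of variables and the pointwise bound), arriving at $DL(c)^2\leq 10c^2$. Your route avoids the second integration by parts entirely and replaces it with a more elementary pointwise estimate; the price is that the change of variables $\xi\leftrightarrow\beta$ appears explicitly and requires the endpoint control you flag, but this is available from Lemmas~\ref{lem-1}, \ref{lemma-2}, and \ref{l:limits}. The payoff is the improved constant, giving $c>\sqrt{D/10}-1$.
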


\begin{proof}
Firstly, we multiply~\eqref{eq-b} for $\beta$ and we integrate in $(\xi_0,\xi_1)\subset(-\infty, \tau)$, so that we obtain
\[\begin{split}
0=&\int_{\xi_0}^{\xi_1}\left(D\eta(s)\beta(s)\beta'(s)	\right)'\beta(s)\,\mathrm{d}s+c\int_{\xi_0}^{\xi_1}\beta(s)\beta'(s)\,\mathrm{d}s+\int_{\xi_0}^{\xi_1}\eta(s)\beta^2(s)\,\mathrm{d}s\\
=&D\eta(\xi_1)\beta^2(\xi_1)\beta'(\xi_1)-D\eta(\xi_0)\beta^2(\xi_0)\beta'(\xi_0)-\int_{\xi_0}^{\xi_1}D\eta(s)\beta(s)\beta'^2(s)\,\mathrm{d}s\\
&+c\left(\frac{\beta(\xi_1)^2}{2}-\frac{\beta(\xi_0)^2}{2}\right)+\int_{\xi_0}^{\xi_1}\eta(s)\beta^2(s)\,\mathrm{d}s.
\end{split}\]
By letting $\xi_0\to-\infty$ and $\xi_1\to \tau$, and using the boundary conditions~\eqref{eq-bc1} along with Lemma~\ref{lem-1}, we have
\begin{equation}\label{eq-est1}
-\int_{-\infty}^{\tau}D\eta(s)\beta(s)\beta'^2(s)\,\mathrm{d}s-\frac{c}{2}+\int_{-\infty}^{\tau}\eta(s)\beta^2(s)\,\mathrm{d}s=0.
\end{equation}
By using Lemma~\ref{lem-1}, we have
\[
\int_{-\infty}^{\tau}\eta(s)\beta^2(s)\,\mathrm{d}s\leq\int_{-\infty}^{\tau}\eta(s)\beta(s)\,\mathrm{d}s= c,
\]
and so from~\eqref{eq-est1} we deduce
\begin{equation}\label{eq-est21}
\int_{-\infty}^{\tau}D\eta(s)\beta(s)\beta'^2(s)\,\mathrm{d}s\leq \frac{c}{2}.
\end{equation}

Next, we multiply~\eqref{eq-b} for $D\eta\beta\beta'$ and we integrate in $(\xi_0,\xi_1)\subset(-\infty, +\infty)$, so that we obtain
\[\begin{split}
0=&\int_{\xi_0}^{\xi_1}\left(D\eta(s)\beta(s)\beta'(s)	\right)'D\eta(s)\beta(s)\beta'(s)\,\mathrm{d}s+c\int_{\xi_0}^{\xi_1}\beta'(s)D\eta(s)\beta(s)\beta'(s)\,\mathrm{d}s\\
&+\int_{\xi_0}^{\xi_1}D\beta'(s)\eta^2(s)\beta^2(s)\,\mathrm{d}s\\
=&\frac12\left(D\eta(\xi_1)\beta(\xi_1)\beta'(\xi_1)\right)^2-\frac12\left(D\eta(\xi_0)\beta(\xi_0)\beta'(\xi_0)\right)^2+c\int_{\xi_0}^{\xi_1}D\eta(s)\beta(s)\beta'^2(s)\,\mathrm{d}s\\
&+\int_{\xi_0}^{\xi_1}D\beta'(s)\eta^2(s)\beta^2(s)\,\mathrm{d}s.
\end{split}\]
By letting $\xi_0\to-\infty$ and $\xi_1\to \tau$, and using the boundary conditions~\eqref{eq-bc} along with Lemma~\ref{lem-1}, we have
\begin{equation}\label{eq-est2}
c\int_{-\infty}^{\tau}D\eta(s)\beta(s)\beta'^2(s)\,\mathrm{d}s+\int_{-\infty}^{\tau}D\beta'(s)\eta^2(s)\beta^2(s)\,\mathrm{d}s=0.
\end{equation}

From~\eqref{eq-est21} and~\eqref{eq-est2}, we deduce
\begin{equation}\label{eq-est5}
\frac{c^2}{2}\geq-\int_{-\infty}^{\tau}D\beta'(s)\eta^2(s)\beta^2(s)\,\mathrm{d}s.
\end{equation}
Thanks to Proposition~\ref{prop-bound} and applying inequality~\eqref{eq-nb}, we obtain
\[\begin{split}
\frac{c^2}{2}&\geq\int_{-\infty}^{\tau}D\left(-\beta'(s)\right)\eta^2(s)\beta^2(s)\,\mathrm{d}s\\
&\geq \int_{-\infty}^{\tau}D\left(-\beta'(s)\right)L(c)^2\left(1-\beta(s)\right)^2 \beta^2(s)\,\mathrm{d}s\\
&= D L^2(c)\int_{0}^{1}\left(1-y\right)^2y^2\,\mathrm{d}y= \frac{DL^2(c)}{30},\\
\end{split}\]
with $L(c)$ defined in Proposition~\ref{prop-bound}.
That leads us to conclude that the wave speed $c$ must satisfy the following condition:
\begin{equation}\label{aux-est}
\left(\frac{c}{L(c)}\right)^2=\left(\frac{2+c^2+c\sqrt{c^2+4}}{c+\sqrt{c^2+4}}\right)^2
\geq \frac{D}{15}.
\end{equation}
Given that the function $c\mapsto\left({c}/{L^2(c)}\right)$ is strictly increasing, it follows that for $0<D\leq15$, inequality~\eqref{aux-est} is satisfied for all $c\geq0;$ hence, $c>0$ by~Lemma~\ref{lem-1}~$(iii)$. Conversely, for $D>15$, the inequality~\eqref{aux-est} yields
\[
1+c>\frac{2}{c+\sqrt{c^2+4}}+c\geq \sqrt{\frac{D}{15}},
\]
and so $c> \sqrt{{D}/{15}}-1$. This completes the proof.
\end{proof}

Now, we provide a first-order reduction that will be useful in the following.
\begin{remark}[Reduction to a singular first-order problem]
Let $(\eta, \beta)$ be a wavefront of~\eqref{eq-sys} for some wave speed $c>0$.
By the strict monotonicity of $\beta$ in $(-\infty, \tau)$ (Lemma~\ref{lem-1}~$(v)$), we can define
\begin{equation}\label{e:Nfun}
N(\beta):=\eta(\xi(\beta)), \quad 0<\beta<1,
\end{equation}
where $\xi=\xi(\beta)$ is the inverse function of $\beta$ in $(-\infty, \tau)$.
By Lemma~\ref{lem-1}$(iv)$--$(v)$, we have
\[
\dot{N}(\beta)=\frac{\eta'(\xi)}{\beta'(\xi)}<0, \quad 0<\beta<1,
\]
where $\dot{\hphantom{N}} =\frac{\mathrm{d}}{\mathrm{d}\beta}$. Moreover, by using~\eqref{eq-bc1}, we obtain
\[
\lim_{\beta\to 0^+} N(\beta)=\lim_{\xi \to \tau}\eta(\xi)=\eta(\tau), \qquad \lim_{\beta\to 1^-} N(\beta)=\lim_{\xi \to -\infty}\eta(\xi)=0.
\]
Hence $N$ can be extended on the closed interval $[0,1]$ and by Lemma~\ref{l:limits}~$(iii)$, it follows
\[
\dot{N}(1)=\lim_{\beta \to 1^-}\frac{N(\beta)}{\beta -1}=\lim_{\xi \to -\infty}-\frac{\eta(\xi)}{1-\beta(\xi)}=- \frac{c^2+c\sqrt{c^2+4}}{2+c^2+c\sqrt{c^2+4}}.
\]
We set
\begin{equation}\label{e:z}
z(\beta):=DN(\beta)\beta\beta'(\xi(\beta)), \quad 0<\beta<1.
\end{equation}
Due to Lemma~\ref{lem-1}~$(v)$, we have $z(\beta)<0$, for all $0<\beta<1$. From~\eqref{eq-b}, we obtain $\dot z(\beta)=-c-{DN^2(\beta)\beta^2}/{z},$ for all $0<\beta<1.$ Moreover, by Lemma~\ref{lem-1}~$(i)$--$(ii)$, we have that $z(0)=z(1)=0$.
Therefore, to each wavefront of \eqref{eq-sys} corresponds the function $z(\beta)$ defined in~\eqref{e:z} satisfying the following first-order singular boundary value problem
\begin{subnumcases}{\label{e:zsist}}
\dot{z}(\beta)=-c-\frac{DN^2(\beta)\beta^2}{z(\beta)}, \quad 0<\beta<1, \label{e:z1} \\
z(\beta)<0, \quad 0<\beta<1, \label{e:z2}\\
z(0)=z(1)=0, \label{e:z3}
\end{subnumcases}
with $z \in C[0,1] \cap C^1(0,1)$.
\end{remark}

\begin{lemma}\label{l:beta'intau}
If $(\eta,\beta)$ is a wavefront of~\eqref{eq-sys}, then $\beta'(\tau^-)$ exists and satisfies one of the following conditions:
$\beta'(\tau^-)=0$ or $\beta'(\tau^-)=-\displaystyle \frac{c}{D\eta(\tau)}$.
\end{lemma}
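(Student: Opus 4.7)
The plan is to reduce the computation to the singular first-order problem introduced in the Remark immediately before the lemma. Setting $V(\beta) := z(\beta)/\beta$ for $\beta \in (0,1)$, and using $\beta'(\xi(\beta)) = z(\beta)/(DN(\beta)\beta)$ together with $N(0) = \eta(\tau)$ (understood as $\eta(\tau) = 1$ when $\tau = +\infty$), we see that $\beta'(\tau^-)$ exists if and only if $\ell := \lim_{\beta \to 0^+} V(\beta)$ exists, in which case $\beta'(\tau^-) = \ell/(D\eta(\tau))$. The task therefore reduces to proving that $\ell$ exists and belongs to $\{0, -c\}$.

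First I would establish two bounds on $V$. From \eqref{e:z1} and $z < 0$, $\dot z > -c$, so integrating with $z(0) = 0$ gives $z(\beta) > -c\beta$, hence $V(\beta) \in (-c, 0)$ on $(0, 1)$. Multiplying \eqref{e:z1} by $z$, integrating from $0$ to $\beta$ using $z(0) = 0$ and $\int_0^\beta N^2(s)s^2\,ds = O(\beta^3)$, and dividing by $\beta^2$, I obtain the asymptotic energy identity
\[
V^2(\beta) + c\,\bar V(\beta) = o(1) \quad \text{as } \beta \to 0^+,
\]
where $\bar V(\beta) := (2/\beta^2)\int_0^\beta s V(s)\,ds$ is a probability-weighted mean of $V$ on $[0, \beta]$. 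Since $\liminf \bar V \geq \liminf V$ and $\limsup \bar V \leq \limsup V$, evaluating the identity along sequences realizing $\liminf V$ and $\limsup V$ forces each of these extreme values into the set $\{0, -c\}$.

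To upgrade this into existence of the full limit, I would carry out a phase-plane analysis of the ODE for $V$. Writing $z = \beta V$ in \eqref{e:z1} yields $\beta \dot V = -(V + c) - DN^2(\beta)\beta/V$, whose zero locus for small $\beta$ consists of the two smooth nullclines
\[
V_c^\pm(\beta) = \tfrac{1}{2}\bigl(-c \pm \sqrt{c^2 - 4DN^2(\beta)\beta}\bigr),
\]
with $V_c^+(\beta) \to 0$ and $V_c^-(\beta) \to -c$ as $\beta \to 0^+$, $V_c^+$ decreasing and $V_c^-$ increasing in $\beta$. Comparing $\dot V$ with $\dot V_c^\pm$ at would-be crossings shows that the trajectory $\beta \mapsto V(\beta)$ can transition among the three regions $\{V > V_c^+\}$, $\{V_c^- < V < V_c^+\}$, $\{V < V_c^-\}$ only in a single direction as $\beta$ grows, so for $\beta$ sufficiently small $V$ lies in exactly one of them. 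In each such region $V$ is monotone in $\beta$ and trapped between two barriers whose limits lie in $\{0, -c\}$, yielding existence of $\ell$ and the identification $\ell \in \{0, -c\}$. Dividing by $D\eta(\tau)$ then gives the two stated values of $\beta'(\tau^-)$.

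The main obstacle is the phase-plane step: the energy identity alone leaves open the oscillatory scenario $\liminf V = -c$ and $\limsup V = 0$, so a careful nullcline/invariant-region argument is genuinely needed to rule it out and secure a unique limit.
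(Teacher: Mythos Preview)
Your reduction to $V(\beta)=z(\beta)/\beta$ and the conclusion $\beta'(\tau^-)=\ell/(D\eta(\tau))$ with $\ell\in\{0,-c\}$ is exactly the paper's, but the paper's proof of the key step is a single citation: it invokes \cite[Proposition~8.2]{BCM1} to assert directly that $\dot z(0^+)=\lim_{\beta\to 0^+}z(\beta)/\beta$ exists and lies in $\{0,-c\}$, and then just translates back. You instead re-prove that external proposition from scratch via the energy identity and a nullcline analysis of $\beta\dot V=-(V+c)-DN^2(\beta)\beta/V$. This is a legitimate and self-contained alternative, at the cost of considerably more work.

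Two small imprecisions are worth flagging. First, your energy identity $V^2+c\bar V=o(1)$ constrains only $\limsup V$ to lie in $\{0,-c\}$; the analogous inequality for $\liminf V$ is vacuous on $[-c,0]$, so the phase-plane step is doing all of the work for existence of $\lim V$, after which $\limsup V=\lim V$ finishes the identification. Second, the monotonicity of the nullclines $V_c^\pm$ needs $DN^2(\beta)\beta$ increasing near $0$, which is not immediate since $N$ is decreasing. What actually suffices, and what you should use, is only that $V_c^+(\beta)\to 0$ and $V_c^-(\beta)\to -c$: then any fixed level $a\in(-c,0)$ eventually lies strictly between the nullclines, where $\dot V<0$, and this is already enough to rule out oscillation of $V$ through $a$ and force a monotone limit. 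With these adjustments your argument is complete.
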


\begin{proof}
Let $z(\beta)$ be the function defined as in \eqref{e:z}. By \cite[Proposition 8.2]{BCM1} the limit $\dot{z}(0^+)=\displaystyle \lim_{\beta \to 0^+} \frac{z(\beta)}{\beta}$ exists and it is either $\dot{z}(0^+)=-c$ or $\dot{z}(0^+)=0$. We have that
\[
\lim_{\xi\to\tau^-}\beta'(\xi)=\lim_{\beta \to 0^+}\beta'(\xi(\beta))=\lim_{\beta \to 0^+}\frac{1}{DN(\beta)}\frac{z(\beta)}{\beta}
\]
with $\xi(\beta)$ and $N(\beta)$ as in~\eqref{e:Nfun}. Since, $\lim_{\beta\to 0} N(\beta)=\eta(\tau)$, the claim is proved.
\end{proof}

\begin{remark}\label{rem-yy}
According to Lemma~\ref{lem-1}~$(v)$, the wavefronts $(\eta,\beta)$ of~\eqref{eq-sys} satisfy $\beta(\xi)<1$ for every $\xi\in\mathbb{R}$, which implies that wavefronts can only be sharp at $\ell=0$ (cf., Definition~\ref{def-2}). Depending on the value of $\tau$, as defined in~\eqref{eq-tau}, and Lemma~\ref{l:beta'intau}, wavefronts can be of two types: classical, if $\tau=+\infty$ or $\tau<+\infty$ with $\beta'(\tau^-)=0$, or sharp, if $\tau<+\infty$ with $\beta'(\tau^-)= -\frac{c}{D\eta(\tau)}$. In Proposition~\ref{p:tau}, we will show that the case $\tau<+\infty$ and $\beta'(\tau^-)=0$ can not occur. Figure~\ref{fig-1} illustrates the two types of admissible wavefronts of~\eqref{eq-sys}.
\end{remark}

\begin{figure}[htb]
\centering
\begin{tikzpicture}
\begin{axis}[legend style={at={(axis cs:72,30)},anchor=north west},
  tick label style={font=\scriptsize},
  axis y line=middle,
  axis x line=middle,
  ytick={0,1},
  yticklabel style={anchor=south east},
  xtick={0,7},
  xticklabels={$0$,$\tau$},
  extra y tick style={anchor=north east,yticklabel style={anchor=east,yshift=-0.5mm}},
  extra y ticks={0.22,0.75,0.87,1},
  extra y tick labels={$\eta_0$,$\eta_{\tau}$,$\beta_0$},
  xlabel={\small $\xi$}, ylabel={},
every axis x label/.style={
    at={(ticklabel* cs:1.0)},
    anchor=west,
},
every axis y label/.style={
    at={(ticklabel* cs:5.0)},
    anchor=south west
},
  set layers,
  width=8cm,
  height=4.5cm,
  xmin=-12,
  xmax=12,
  ymin=-0.1,
  ymax=1.5]
\addplot [draw=black, line width=0.6pt, smooth, on layer=axis background]coordinates {(-12,1)(12,1)};
\addplot [draw=black, dotted, line width=0.3pt, smooth, on layer=axis background]coordinates {(7,0)(7,0.75)};
\addplot [draw=black, dotted, line width=0.3pt, smooth, on layer=axis background]coordinates {(0,0.75)(7,0.75)};
\addplot [draw=black, line width=1pt, smooth]coordinates {(12,0) (7,0)};
\addplot [draw=black, line width=1pt, smooth]coordinates {(-12.0000,0.9700) (-11.3540,0.9704) (-10.7280,0.9707) (-10.1220,0.9707) (-9.5344,0.9706) (-8.9660,0.9703) (-8.4160,0.9698) (-7.8838,0.9691) (-7.3691,0.9681) (-6.8715,0.9668) (-6.3904,0.9653) (-5.9255,0.9635) (-5.4763,0.9613) (-5.0424,0.9588) (-4.6234,0.9560) (-4.2188,0.9528) (-3.8281,0.9492) (-3.4510,0.9452) (-3.0871,0.9408) (-2.7358,0.9360) (-2.3967,0.9307) (-2.0695,0.9250) (-1.7536,0.9188) (-1.4487,0.9120) (-1.1543,0.9048) (-0.8700,0.8970) (-0.5952,0.8887) (-0.3297,0.8798) (-0.0730,0.8703) (0.1754,0.8602) (0.4159,0.8495) (0.6490,0.8382) (0.8750,0.8263) (1.0944,0.8136) (1.3077,0.8003) (1.5153,0.7863) (1.7175,0.7715) (1.9149,0.7560) (2.1079,0.7398) (2.4824,0.7051) (2.6648,0.6865) (2.8445,0.6671) (3.1975,0.6258) (3.3718,0.6039) (3.5451,0.5810) (3.8906,0.5327) (4.0637,0.5071) (4.2376,0.4806) (4.5896,0.4246) (5.3223,0.3005) (5.5140,0.2668) (5.7099,0.2321) (6.1165,0.1593) (6.3279,0.1212) (6.5453,0.0820) (6.7692,0.0416) (7.0000,0.0000)};
\addplot [draw=black, line width=1pt, smooth]coordinates {(12.0000,0.9800) (11.6780,0.9793) (11.3690,0.9772) (11.0720,0.9738) (10.7860,0.9692) (10.5100,0.9633) (10.2440,0.9562) (9.9872,0.9479) (9.7387,0.9385) (9.4979,0.9281) (9.2641,0.9167) (9.0366,0.9043) (8.8148,0.8910) (8.5979,0.8768) (8.3853,0.8617) (7.9702,0.8293) (7.7662,0.8120) (7.5638,0.7940) (7.1607,0.7563) (6.3425,0.6749) (6.1314,0.6536) (5.9163,0.6319) (5.4715,0.5879) (4.5038,0.4981) (4.2415,0.4756) (3.9699,0.4531) (3.3953,0.4084) (3.0910,0.3863) (2.7745,0.3644) (2.1021,0.3216) (1.7447,0.3006) (1.3724,0.2801) (0.9844,0.2600) (0.5801,0.2404) (0.1587,0.2214) (-0.2804,0.2029) (-1.2145,0.1679) (-1.7110,0.1514) (-2.2279,0.1357) (-2.7659,0.1208) (-3.3259,0.1067) (-3.9084,0.0936) (-4.5141,0.0813) (-5.1437,0.0701) (-5.7979,0.0598) (-6.4774,0.0506) (-7.1829,0.0426) (-7.9151,0.0357) (-8.6746,0.0300) (-9.4622,0.0255) (-10.2780,0.0223) (-11.1240,0.0205) (-12.0000,0.0200)};
\node at (axis cs:10,0.2) {\footnotesize{$\beta$}};
\node at (axis cs:10,0.8) {\footnotesize{$\eta$}};
\end{axis}
\end{tikzpicture}
\quad
\begin{tikzpicture}
\begin{axis}[legend style={at={(axis cs:72,30)},anchor=north west},
  tick label style={font=\scriptsize},
  axis y line=middle,
  axis x line=middle,
  ytick={0,1},
  yticklabel style={anchor=south east},
  xtick={0,12},
  xtick style={draw=none},
  xticklabels={0,\color{white}$\tau$},
  extra y tick style={anchor=north east,yticklabel style={anchor=east,yshift=-0.5mm}},
  extra y ticks={0.31,0.71,1},
  extra y tick labels={$\eta_0$,$\beta_0$},
  xlabel={\small $\xi$}, ylabel={},
every axis x label/.style={
    at={(ticklabel* cs:1.0)},
    anchor=west,
},
every axis y label/.style={
    at={(ticklabel* cs:5.0)},
    anchor=south west
},
  set layers,
  width=8cm,
  height=4.5cm,
  xmin=-12,
  xmax=12,
  ymin=-0.1,
  ymax=1.5]
\addplot [draw=black, line width=0.6pt, smooth, on layer=axis background]coordinates {(-12,1)(12,1)};
\addplot [draw=black, line width=1pt, smooth]coordinates {(12.0000,0.9700) (11.6750,0.9689) (11.3580,0.9665) (11.0470,0.9628) (10.7420,0.9580) (10.4420,0.9520) (10.1480,0.9449) (9.8590,0.9368) (9.5742,0.9275) (9.2934,0.9174) (9.0163,0.9062) (8.7423,0.8941) (8.4712,0.8812) (8.2024,0.8675) (7.9356,0.8529) (7.4063,0.8216) (7.1429,0.8050) (6.8798,0.7877) (6.3530,0.7514) (5.2852,0.6732) (3.0000,0.5031) (2.6923,0.4814) (2.3784,0.4597) (1.7300,0.4165) (1.3947,0.3951) (1.0516,0.3740) (0.3398,0.3324) (-0.0295,0.3120) (-0.4085,0.2920) (-1.1968,0.2532) (-1.6069,0.2346) (-2.0283,0.2164) (-2.9063,0.1818) (-3.3637,0.1654) (-3.8340,0.1498) (-4.3176,0.1348) (-4.8149,0.1206) (-5.3263,0.1073) (-5.8522,0.0947) (-6.3931,0.0831) (-6.9492,0.0724) (-7.5211,0.0626) (-8.1092,0.0539) (-8.7138,0.0462) (-9.3354,0.0396) (-9.9745,0.0342) (-10.6310,0.0299) (-11.3060,0.0268) (-12.0000,0.0250)};
\addplot [draw=black, line width=1pt, smooth]coordinates {(-12.0000,0.9700) (-11.1740,0.9705) (-10.3820,0.9697) (-9.6226,0.9674) (-8.8960,0.9638) (-8.2007,0.9590) (-7.5357,0.9529) (-6.9001,0.9456) (-6.2930,0.9371) (-5.7133,0.9276) (-5.1601,0.9170) (-4.6325,0.9054) (-4.1294,0.8928) (-3.6500,0.8793) (-3.1932,0.8649) (-2.7581,0.8497) (-2.3438,0.8337) (-1.9492,0.8169) (-1.5734,0.7995) (-1.2155,0.7813) (-0.8745,0.7626) (-0.5494,0.7434) (-0.2393,0.7236) (0.3398,0.6826) (0.6108,0.6615) (0.8707,0.6401) (1.3608,0.5964) (1.5931,0.5743) (1.8181,0.5519) (2.2500,0.5070) (2.4589,0.4845) (2.6644,0.4620) (3.0688,0.4172) (3.2698,0.3950) (3.4711,0.3731) (3.8789,0.3299) (4.0873,0.3088) (4.2999,0.2881) (4.5177,0.2678) (4.7417,0.2480) (4.9728,0.2287) (5.2121,0.2100) (5.7188,0.1743) (5.9881,0.1576) (6.2693,0.1415) (6.5635,0.1263) (6.8716,0.1118) (7.1945,0.0983) (7.5331,0.0857) (7.8886,0.0740) (8.2617,0.0634) (8.6535,0.0538) (9.0650,0.0453) (9.4971,0.0379) (9.9507,0.0318) (10.4270,0.0269) (10.9260,0.0232) (11.4510,0.0209) (12.0000,0.0200)};
\node at (axis cs:10,0.2) {\footnotesize{$\beta$}};
\node at (axis cs:10,0.8) {\footnotesize{$\eta$}};
\end{axis}
\end{tikzpicture}
\caption{Admissible wavefronts for~\eqref{eq-sys} with profile $(\eta,\beta)$ satisfying~\eqref{eq-bc}. On the left sharp profile ($\tau<+\infty$), on the right classical profile ($\tau=+\infty$). }\label{fig-1}
\end{figure}
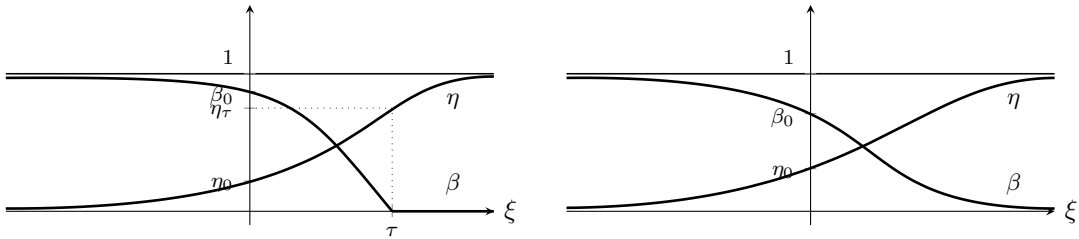

First we consider a semi-wavefront $(\eta, \beta)$ of~\eqref{eq-sys} in $(-\infty,\tau)$ and we provide an upper estimate for $\eta(\tau^-)$ which is independent on $c$.

\begin{proposition}\label{p:equilim}
If $(\eta, \beta)$ is a semi-wavefront of~\eqref{eq-sys} in $(-\infty, \tau)$ for some $c>0$, then
\begin{equation}\label{eq-etad}
\eta(\tau^-)\leq \sqrt{e^D}.
\end{equation}
\end{proposition}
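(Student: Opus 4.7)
My plan is to control $\eta(\tau^-)$ by the Lyapunov-type quantity
\[ V(\xi) := \eta(\xi)^2\, e^{D\beta(\xi)^2}, \qquad \xi\in(-\infty,\tau). \]
If $V\leq e^D$ throughout $(-\infty,\tau)$, then since $e^{D\beta^2}\geq 1$ one gets $\eta(\xi)^2 \leq e^{D(1-\beta(\xi)^2)} \leq e^D$ for every $\xi<\tau$; letting $\xi\to\tau^-$ yields $\eta(\tau^-)\leq \sqrt{e^D}$. The exponential weight $e^{D\beta^2}$ is chosen because its derivative $2D\beta\beta' e^{D\beta^2}$ pairs cleanly with the degenerate-diffusion term $D\eta\beta\beta'$ appearing in~\eqref{eq-bis}.

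Multiplying \eqref{eq-bis} by $2\eta$ and regrouping gives $(\eta^2)' + D\eta^2(\beta^2)' = 2c\eta(1-\beta-\eta)$, hence
\[
V'(\xi) = e^{D\beta(\xi)^2}\bigl[(\eta^2)'(\xi) + D\eta(\xi)^2(\beta^2)'(\xi)\bigr] = 2c\,\eta(\xi)\,e^{D\beta(\xi)^2}\bigl(1-\beta(\xi)-\eta(\xi)\bigr),
\]
so the sign of $V'$ equals the sign of $1-\beta-\eta$. I also record the elementary scalar inequality $(1-b)^2 e^{Db^2}\leq e^D$ for every $b\in[0,1]$ and $D>0$: using $1+x\leq e^x$ it reduces to $(1-b)^2\leq 1+D(1-b^2)$, and
\[ 1+D(1-b^2)-(1-b)^2 = D(1-b)(1+b) + b(2-b) \geq 0 \]
on $[0,1]$, with strict inequality for $b<1$.

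To prove $V\leq e^D$ on $(-\infty,\tau)$ I argue by contradiction. Suppose $V(\xi_1)>e^D$ for some $\xi_1<\tau$. Since $V(-\infty)=0<e^D$ and $V$ is continuous (Remark~\ref{r:betaC2}), there exists a largest $\xi^*\leq\xi_1$ with $V(\xi^*)=e^D$; on $(\xi^*,\xi_1]$ one has $V>e^D$, and hence $V'(\xi^*)\geq 0$. The formula for $V'$ together with $\eta(\xi^*)>0$ (Remark~\ref{r:semi-w}) forces $\eta(\xi^*)+\beta(\xi^*)\leq 1$, i.e.\ $\eta(\xi^*)\leq 1-\beta(\xi^*)$. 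Applying the scalar inequality above—and noting $\beta(\xi^*)<1$ since $\xi^*>-\infty$—yields $V(\xi^*)\leq (1-\beta(\xi^*))^2 e^{D\beta(\xi^*)^2}<e^D$, contradicting $V(\xi^*)=e^D$.

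The main obstacle is guessing the correct energy $V=\eta^2 e^{D\beta^2}$; after that, the proof collapses to a standard maximum-principle barrier argument combined with the elementary scalar inequality. The same chain of estimates works uniformly in $\tau$, covering both the case $\tau<+\infty$ (where $\beta(\tau^-)=0$) and $\tau=+\infty$ (where $\eta(\tau^-)$ is understood as $\lim_{\xi\to+\infty}\eta(\xi)$).
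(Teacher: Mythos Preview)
Your proof is correct. Both your argument and the paper's hinge on the same underlying quantity: taking logs, your $V$ satisfies $\tfrac12\ln V=\ln\eta+\tfrac{D}{2}\beta^2$, and this is precisely what emerges when the paper divides~\eqref{eq-bis} by $\eta$ and integrates, obtaining $\frac{D}{2}\beta^2+\ln\eta$ on the left. The difference lies in the packaging. The paper first disposes of the trivial case $\eta(\tau^-)\leq 1$, then localizes at the point $\xi_0$ where $\eta(\xi_0)=1$, integrates~\eqref{e:equazione2} on $[\xi_0,\xi]$, and uses the crude bounds $1-\beta<1$ and $\eta>1$ on $(\xi_0,\tau)$ to cancel the two $c$-terms, leaving $\ln\eta(\xi)<\tfrac{D}{2}$. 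You instead treat $V$ globally as a Lyapunov function: the identity $V'=2c\eta e^{D\beta^2}(1-\beta-\eta)$ together with the scalar barrier $(1-b)^2e^{Db^2}<e^D$ rules out any crossing of the level $e^D$ by a maximum-principle argument. Your route avoids the case distinction and the choice of a special point $\xi_0$, at the cost of proving the auxiliary scalar inequality; the paper's route is more hands-on but needs no such lemma. The two are essentially equivalent in depth.
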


\begin{proof}
If $\eta(\tau^-)\leq1$, then~\eqref{eq-etad} is trivially satisfied. Therefore, suppose that $\eta(\tau^-)>1$. Then, by~\eqref{eq-bc1} and since $\eta$ is strictly increasing in $(0,\tau)$ (see Lemma~\ref{lem-1} and Remark \ref{r:semi-w}), there exists $\xi_0 \in (0, \tau)$ such that $\eta(\xi_0)=1$ and $\eta(\xi)>1$ for $\xi \in (\xi_0, \tau)$.  Using~\eqref{eq-bis}, we obtain
\begin{equation}\label{e:equazione2}
D\beta(\xi)\beta'(\xi)=c\frac{1-\beta(\xi)}{\eta(\xi)}-\frac{\eta'(\xi)}{\eta(\xi)}-c, \quad \forall \xi\in (-\infty, \tau).
\end{equation}
Let us fix $\xi \in [\xi_0, \tau)$. Integrating~\eqref{e:equazione2} over $[\xi_0, \xi]$, we have
\[
D\int_{\xi_0}^{\xi} \beta(s)\beta'(s) \,\mathrm{d}s=c\int_{\xi_0}^{\xi}\frac{1-\beta(s)}{\eta(s)}\,\mathrm{d}s-\ln \eta(\xi)-c(\xi-\xi_0).
\]
Since $1-\beta(\xi)< 1$ for all $\xi\in[\xi_0, \tau)$, we obtain that
\[
\frac D 2 \beta^2(\xi)-\frac D 2\beta^2(\xi_0) < c\int_{\xi_0}^{\xi}\frac{1}{\eta(s)}\, \mathrm{d}s-\ln \eta(\xi)-c(\xi-\xi_0).
\]
Moreover, from Remark~\ref{r:semi-w}, we know that $0<\beta(\xi_0)<\beta(-\infty)=1$. Hence, since $\eta(s)>1$ for all $s\in (\xi_0, \xi]$, we deduce
\[
0<\frac D2 \beta^2(\xi) <\frac D 2+ c(\xi-\xi_0)-\ln \eta(\xi)-c(\xi-\xi_0)=\frac D 2 -\ln \eta(\xi).
\]
Accordingly, we obtain $\ln \eta(\xi)<\frac D2$, for all $\xi\in[\xi_0, \tau)$. Thus, $\eta(\xi)< \sqrt{e^D}$ for all $\xi\in[\xi_0, \tau)$ and so~\eqref{eq-etad} follows.
\end{proof}

In the following result, we explore the regularity of semi-wavefronts of~\eqref{eq-sys} in $(-\infty, \tau)$. To achieve this, we will compare with solutions of~\eqref{e:z1}. We say that $\gamma\in C^1(\sigma_1,\sigma_2)$, with $0\leq\sigma_1<\sigma_2\leq1$ is a strict upper- solution of~\eqref{e:z1} in $(\sigma_1,\sigma_2)$ if $\dot{\gamma}(\beta)> -c-\frac{DN^2(\beta)\beta^2}{\gamma(\beta)}$ for all $\beta\in(\sigma_1,\sigma_2)$. We say that $\gamma$ is a strict lower-solution of~\eqref{e:z1} in $(\sigma_1,\sigma_2)$ if the inequality is reversed (see, e.g.,~\cite{CM}).

\begin{proposition}\label{p:tau}
Let $(\eta, \beta)$ be a semi-wavefront of~\eqref{eq-sys} in $(-\infty, \tau)$ for some $c>0$. Then, the following hold.
\begin{enumerate}[nosep,parsep=2pt,wide=0pt,  labelwidth=20pt, align=left]
\item[$(i)$] If $\tau=+\infty$, then $(\eta, \beta)$ is a classical wavefront of~\eqref{eq-sys}.
\item[$(ii)$] If $\tau<+\infty$, then $(\eta, \beta)$ is a wavefront of~\eqref{eq-sys} with
\begin{equation}\label{e:dopotau}
\eta(\xi)=\eta(\tau)+\frac{\eta'(\tau)}{c}\left(1-e^{c(\tau-\xi)}\right), \quad \beta(\xi)=0, \quad  \forall \xi \in[ \tau,+\infty),
\end{equation}
if and only if $\displaystyle{\lim_{\xi \to \tau^-}}D\eta(\xi)\beta(\xi)\beta'(\xi)=0$. Furthermore, if this condition is satisfied, then it is a sharp wavefront at $0$.
\end{enumerate}
\end{proposition}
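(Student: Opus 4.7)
The plan is to treat $(i)$ and $(ii)$ separately, in both cases reducing the problem to an analysis of the behavior near the right endpoint of the positivity interval of $\beta$.

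For $(i)$, the goal is to show $\eta(+\infty)=1$ and $\beta(+\infty)=0$, after which classicality will be immediate. Proposition~\ref{p:equilim} gives $\eta(+\infty)\leq\sqrt{e^D}$, so by the monotonicity of $\eta$ and $\beta$ (Remark~\ref{r:semi-w}) the limits $\eta_\infty:=\eta(+\infty)$ and $\beta_\infty:=\beta(+\infty)$ exist and are finite, with $\eta_\infty\geq \eta(0)>0$. From~\eqref{e:etaprime} the quantity $\eta'+c\eta$ is monotone increasing, and boundedness of $\eta$ forces its limit to equal $c\eta_\infty$, so $\eta'(+\infty)=0$. Then~\eqref{eq-n} gives $\eta''(\xi)\to \eta_\infty\beta_\infty$, and since $\eta'\to 0$ with $\eta$ bounded this limit must be $0$, forcing $\beta_\infty=0$. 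Passing to the limit in~\eqref{eq-bis} and noting that $D\eta\beta\beta'$ cannot have a nonzero limit (because $\beta$ is monotone and bounded with $\beta\to 0$, so $\beta'$ cannot blow up) yields $\eta_\infty=1$. Classicality then follows directly from Remark~\ref{r:betaC2}, which already provides $\beta\in C^2(\mathbb{R})$.

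For the equivalence in $(ii)$, both directions are computational. If the extension~\eqref{e:dopotau} produces a wavefront, then on $[\tau,+\infty)$ equation~\eqref{eq-n} reduces to $\eta''+c\eta'=0$ with $\eta(+\infty)=1$, which forces $\eta'(\tau)=c(1-\eta(\tau))$; substituting $\beta(\tau^-)=0$ and this relation into~\eqref{eq-bis} yields $\lim_{\xi\to\tau^-} D\eta\beta\beta'=0$. Conversely, if this limit vanishes then~\eqref{eq-bis} at $\tau^-$ gives $\eta'(\tau^-)=c(1-\eta(\tau))$, so the extension of $\eta$ defined by~\eqref{e:dopotau} is $C^2$ across $\tau$ (the one-sided limits of $\eta''$ agree via~\eqref{eq-n}), satisfies~\eqref{eq-n} on all of $\mathbb{R}$, and attains $\eta(+\infty)=1$. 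Extending $\beta\equiv 0$ on $[\tau,+\infty)$, the weak formulation~\eqref{eq-weak} reduces to checking continuity of the flux $D\eta\beta\beta'+c\beta$ across $\tau$, which holds because both one-sided limits are zero.

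The delicate part is sharpness. By Lemma~\ref{l:beta'intau}, $\beta'(\tau^-)\in\{0,\,-c/(D\eta(\tau))\}$, so it suffices to exclude $\beta'(\tau^-)=0$. I would translate this hypothesis through~\eqref{e:z} into $\dot z(0^+)=0$ and then extract from~\eqref{e:z1} the precise asymptotics $z(\beta)\sim -D\eta(\tau)^2\beta^2/c$ as $\beta\to 0^+$ by balancing orders. This yields $\beta'(\xi(\beta))\sim -\eta(\tau)\beta/c$ near $\tau$, so that $\beta$ satisfies essentially a linear equation $\beta'\approx -(\eta(\tau)/c)\beta$; this forces exponential decay of $\beta$ in $\xi$, so $\beta\to 0$ requires $\xi\to+\infty$, contradicting $\tau<+\infty$. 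Hence $\beta'(\tau^-)=-c/(D\eta(\tau))\ne 0$, and since $\beta\equiv 0$ to the right of $\tau$ gives $\beta'(\tau^+)=0$, the profile is sharp at $0$ in the sense of Definition~\ref{def-2}. The principal obstacle is making the asymptotic argument for $z$ rigorous; the cleanest route is probably to work with the integral form of~\eqref{e:z1} and compare against upper/lower solutions of the shape $-A\beta^2$, in the spirit of~\cite{CM}.
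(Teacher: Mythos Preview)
Your plan is correct and tracks the paper's proof closely: the treatment of~$(ii)$, including the extension formula, the weak-solution check via continuity of the flux at $\tau$, and the exclusion of $\beta'(\tau^-)=0$ by comparing $z$ with barriers of the form $-A\beta^2$ (which is exactly what the paper does, taking $A=D\eta(\tau)^2/(c-\varepsilon)$), is essentially identical. The one genuine difference is in~$(i)$: you deduce $\beta(+\infty)=0$ from $\eta''(\xi)\to\eta_\infty\beta_\infty$ together with $\eta'\to 0$, whereas the paper argues directly that $\beta(+\infty)=l>0$ would force $\eta'(\xi)\geq \tfrac{\eta_0 l}{c}(1-e^{-c\xi})$ via~\eqref{e:etader}, contradicting boundedness of $\eta$; both routes are short, and your version is arguably cleaner once $\eta'(+\infty)=0$ is in hand.
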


\begin{proof}
We divide the proof into two parts by proving the two possible cases separately.

\smallskip
\noindent\textit{Case $(i)$}. Assume $\tau=+\infty$. From Remark \ref{r:semi-w}, we have $\eta'(\xi)>0$ and $\beta'(\xi)<0$ for all $\xi\in\mathbb{R}$. To prove~\eqref{eq-bc2}, we argue by contradiction and suppose that $\beta(+\infty)=l>0.$
From~\eqref{eq-sys-1}--\eqref{eq-bc1} and since $\eta'_0>0$, we deduce that
\[\begin{split}
\eta'(\xi)&=\eta_0'e^{-c\xi}+e^{-c\xi}\int_0^{\xi}\eta(s)\beta(s)e^{cs}\,\mathrm{d}s>e^{-c\xi}\int_0^{\xi}\eta(s)\beta(s)e^{cs}\,\mathrm{d}s\\
&>\eta_0 l e^{-c\xi}\int_0^{\xi}e^{cs}\,\mathrm{d}s = \frac{\eta_0 l}{c}(1-e^{-c\xi}).
\end{split}\]
Passing to the limit as $\xi\to+\infty$, we have
\[\lim_{\xi\to+\infty} \eta'(\xi)\ge \lim_{\xi\to+\infty} \frac{\eta_0 l}{c}(1-e^{-c\xi})= \frac{\eta_0 l}{c}>0,
\]
which is in contradiction with the boundedness of $\eta(+\infty)$ stated in Proposition~\ref{p:equilim}. Hence $\beta(+\infty)=0$. It thus remains to show that $\eta(+\infty)=1.$
According to Remark~\ref{r:semi-w} and Proposition~\ref{p:equilim}, notice that the limit $\eta(+\infty)$ exists and it is finite. Moreover, by  \eqref{e:etader} and the boundedness of $\eta$, we get also that $\eta'(+\infty)$ exists and it is necessarily $0$.
From~\eqref{eq-bis}, we deduce that
\[
\lim_{\xi \to +\infty}D\eta(\xi)\beta(\xi)\beta'(\xi)=c\left(1-\eta(+\infty)\right).
\]
If $\eta(+\infty)< 1$, then $D\eta(\xi)\beta(\xi)\beta'(\xi) > 0$ for sufficiently large values of $\xi$, which is in contradiction with the sign of the function $D\eta\beta\beta'$. On the other hand, if $\eta(+\infty) > 1$, then $\beta'(+\infty) = -\infty$, which is in contradiction with the boundedness of $\beta$. Hence $\eta(+\infty)=1$ and case $(i)$ is proved.

\smallskip
\noindent\textit{Case $(ii)$}. Assume $\tau<+\infty$ hence implying that $(\eta, \beta)$ is a semi-wavefront of~\eqref{eq-sys} in $(-\infty, \tau)$. Since $\eta \in C^1(-\infty, \tau]$, we obtain from \eqref{eq-bis} that $D\eta\beta\beta'$ has a finite limit, when $\xi \to \tau^-$ and
\[
\lambda:=\lim_{\xi \to \tau^-}D\eta(\xi)\beta(\xi)\beta'(\xi)=c-\eta'(\tau)-c\eta(\tau).
\]
Moreover, by Remark~\ref{r:semi-w}, we have $\lambda \leq 0$. Hence, $\eta'(\tau)=c-\lambda
-c\eta(\tau) $ and the function $\eta$ satisfies $\eta''+c\eta'=0$ in the half-line $(\tau, +\infty)$. Therefore
\[
\eta(\xi)=1-\frac{\lambda}{c}+\left(\frac{\lambda}{c}-1+\eta(\tau)  \right)e^{-c(\xi-\tau)}, \qquad \xi \ge \tau.
\]
As a consequence $\eta(+\infty)=1$ if and only if $\lambda=0$. In this case $(\eta, \beta)$ satisfies conditions~\eqref{e:dopotau} in the half-line $(\tau, +\infty)$.
Now we prove that, when extended as in \eqref{e:dopotau} on $(\tau, +\infty)$, the couple $(\eta, \beta)$ is a solution of~\eqref{eq-sys-1}--\eqref{eq-bc}. Indeed, $\beta \in C^2(-\infty, \tau)$ (see Remark~\ref{r:betaC2}) with $\beta(\xi)\beta'(\xi) \to 0$ as $\xi\to \tau^-$ (see Lemma~\ref{lem-1}~$(ii)$), hence $\beta\beta' \in L^1_{loc}(\mathbb{R})$. Moreover, let $\psi\in C^{\infty}_0(\tau-\epsilon,\tau+\epsilon)$ with $\epsilon>0$ and $\delta >0$ such that $\tau-\delta>\tau-\epsilon$. It holds
\[\begin{split}
\int^{\tau+\epsilon}_{\tau-\epsilon}&\left[\left(D\eta(\xi)\beta(\xi)\beta'(\xi)+c\beta(\xi)\right)\psi'(\xi) -\eta(\xi)\beta(\xi)\psi(\xi) \right]\,\mathrm{d}\xi\\
&=\lim_{\delta\to0} \int^{\tau-\delta}_{\tau-\epsilon}\left[\left(D\eta(\xi)\beta(\xi)\beta'(\xi)+c\beta(\xi)\right)\psi'(\xi) -\eta(\xi)\beta(\xi)\psi(\xi) \right]\,\mathrm{d}\xi		\\
&=\lim_{\delta\to0} \left[D\eta(\xi)\beta(\xi)\beta'(\xi)\psi(\xi)+c\beta(\xi)\psi(\xi)	\right]^{\tau-\delta}_{\tau-\epsilon}		\\
&\qquad-\lim_{\delta\to0} \int^{\tau-\delta}_{\tau-\epsilon}\left[\left(D\eta(\xi)\beta(\xi)\beta'(\xi)\right)'+c\beta'(\xi)+\eta(\xi)\beta(\xi) \right]\psi(\xi)\,\mathrm{d}\xi		\\
&=\lim_{\delta\to0} D\eta(\tau-\delta)\beta(\tau-\delta)\beta'(\tau-\delta)\psi(\tau-\delta)=D\eta(\tau)\beta(\tau)\beta'(\tau)=0.
\end{split}\]
since $D\eta\beta\beta'\to 0$ as $\xi \to \tau^-$.

It remains to show that $(\eta, \beta)$ is a sharp wavefront in $0$. To this aim, we only need to exclude the case when $\beta'(\tau^-)=0$. We reason by contradiction and assume that $\beta'(\tau^-)=0$. Let $\bar{\beta}:=\beta(\bar{\xi})>0$ for some $\bar{\xi}<\tau$. We have that
\[
\begin{array}{rl}
\displaystyle\tau=\bar{\xi}+\int_{\bar{\beta}}^{0}\dot{\xi}(\beta)\,\mathrm{d} \beta=&\bar{\xi}+\displaystyle\int_{\bar{\beta}}^{0}\frac{\mathrm{d} \beta}{\beta'(\xi(\beta))}\\
\\
\displaystyle=&\bar{\xi}+\displaystyle\int_{\bar{\beta}}^{0}\frac{D\beta N(\beta)}{z(\beta)}\,\mathrm{d} \beta=\bar{\xi}+\int_{0}^{\bar{\beta}}\frac{D N(\beta)}{-\frac{z(\beta)}{\beta}}\,\mathrm{d} \beta,
\end{array}
\]
with $\xi(\beta),$ $N(\beta)$, and $z(\beta)$ defined in~\eqref{e:Nfun} and~\eqref{e:z}.

Let $\varepsilon \in (0, c)$. We claim that there is $\beta_0 \in (0, \bar{\beta})$ such that
\begin{equation}\label{e:claim}
-\frac{z(\beta)}{\beta}<\frac{D\eta^2(\tau)}{c-\varepsilon}\beta, \quad 0<\beta<\beta_0.
\end{equation}
By \eqref{eq-bc1} and Lemma \ref{lem-1}$(v)$ we obtain that  $0<\bar{\eta}:=\eta(\bar{\xi})\leq N(\beta)\leq N(0)=\eta(\tau)<1$ for $0<\beta< \bar{\beta}$. Hence, if
\eqref{e:claim} is true then
\[
\tau>\bar{\xi}+\int_{0}^{\beta_0}\frac{D N(\beta)}{-\frac{z(\beta)}{\beta}}\,\mathrm{d} \beta>\int_{0}^{\beta_0 } \frac{N(\beta)(c-\varepsilon)}{\eta^2(\tau)\beta}\,\mathrm{d} \beta>\int_{0}^{\beta_0 }\frac{\bar{\eta}(c-\varepsilon)}{\eta^2(\tau)\beta}\,\mathrm{d} \beta=+\infty.
\]
Hence, when  \eqref{e:claim} is valid, the case  $\tau<+\infty$ and $\beta'(\tau^-)=0$ produces a contradiction, and then it is not possible. Notice that $\beta'(\tau^-)<0$ (see Lemma~\ref{l:beta'intau}) and hence the corresponding solution is sharp. Now we show \eqref{e:claim} using a comparison-type argument applied to problem~\eqref{e:zsist}. Let
\[
\gamma(\beta):=-\frac{D\eta^2(\tau)}{c-\varepsilon}\beta^2, \quad 0<\beta<1,
\]
with $\varepsilon$ as above. Since
\[
\dot{\gamma}(\beta)=-2\frac{D\eta^2(\tau)}{c-\varepsilon}\beta\to 0, \quad \text{as } \beta\to 0^+,
\]
while
\[
-c-\frac{DN^2(\beta)\beta^2}{\gamma(\beta)}=-c+(c-\varepsilon)\frac{N^2(\beta)}{\eta^2(\tau)}<-c+c-\varepsilon=-\varepsilon,
\]
there is $\tilde{\beta}\in (0,1)$ such that
\[
\dot{\gamma}(\beta)>-c-\frac{DN^2(\beta)\beta^2}{\gamma(\beta)}, \quad 0<\beta<\tilde \beta.
\]
Then, $\gamma(\beta)$ is an upper-solution to \eqref{e:z1} in $(0, \tilde \beta)$. With no loss of generality, we can assume $\tilde \beta<\bar{\beta}$. Since $\beta'(\tau^-)=0$ by the definition of $z$ in \eqref{e:z} we have that
\[
0=\lim_{\xi \to \tau^-}\beta'(\xi)=\lim_{\beta\to 0^+}\beta'(\xi(\beta))=\lim_{\beta \to 0^+}\frac{z(\beta)}{DN(\beta)\beta}
\]
implying
\[
\lim_{\beta \to 0^+}\frac{z(\beta)}{\beta}=0,
\]
since $N(\beta)\to \eta(\tau)\in (0,1)$ when $\beta \to 0$. Corresponding to a decreasing sequence  $\{\beta_n\}$ such that $\beta_n \to 0$ as $n \to +\infty$, we can find $\{\sigma_n\}$ with $0<\sigma_n<\beta_n$ for all $n$ satisfying
\[
\frac{z(\beta_n)}{\beta_n}=\dot{z}(\sigma_n)=-c-\frac{DN^2(\sigma_n)\sigma_n^2}{z(\sigma_n)}\to 0, \quad \text{as } n \to +\infty.
\]
Therefore $\displaystyle \lim_{n \to +\infty}\frac{DN^2(\sigma_n)\sigma_n^2}{-z(\sigma_n)}=c  $ and hence there is $\tilde n$ such that
\[
\frac{DN^2(\sigma_n)\sigma_n^2}{-z(\sigma_n)}>c-\varepsilon, \quad \text{for } n\ge \tilde n,
\]
again with $\varepsilon$ as above. This implies
\[
z(\sigma_n)>-\frac{DN^2(\sigma_n)\sigma_n^2}{c-\varepsilon}>-\frac{D\eta^2(\tau)\sigma_n^2}{c-\varepsilon}=\gamma(\sigma_n), \quad \text{for } n\ge \tilde n.
\]
We choose $n_0\ge \tilde n$ such that $\sigma_{n_0}<\tilde \beta$. Notice that $z(\sigma_{n_0})>\gamma (\sigma_{n_0})$; since, moreover,  $\gamma$ is and upper-solution to \eqref{e:z1} in $(0, \sigma_{n_0})\subset(0, \tilde \beta)$ we obtain (see, e.g, \cite[Lemma 4.3]{CM}) that $z(\beta)>\gamma(\beta)$ for $\beta \in (0, \sigma_{n_0})$ and \eqref{e:claim} is proved with $\beta_0=\sigma_{n_0}$.
\end{proof}

\section{Semi-wavefront on the negative half-line}\label{section-4}

In this section, we demonstrate Theorem~\ref{th-1} by proving the existence of a semi-wavefront of~\eqref{eq-sys} with positive speed $c$ in the half-line $(-\infty, 0]$ (as will be shown in Proposition~\ref{prop-ex1}) and its uniqueness (as will be shown in Proposition~\ref{prop-uniq}). 

\begin{proposition}\label{prop-ex1}
 For every $c>0$, there exists a  semi-wavefront $(\eta,\beta)$ of~\eqref{eq-sys} in $(-\infty, 0]$ with wave speed $c$. Moreover, $\eta'(\xi)>0$ and $\beta'(\xi)<0$ for all $\xi \in (-\infty, 0]$.
 \end{proposition}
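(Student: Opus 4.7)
The plan is to combine a truncation scheme on $[-n,0]$ with a compactness argument in a Fréchet space, using the equivalent first-order formulation of Lemma~\ref{lemma-2}. Since $D\eta\beta>0$ on the admissible region $\{\eta>0,\,0<\beta<1\}$, equation~\eqref{eq-bis} inverts to
\[
\beta'(\xi)=\frac{c(1-\beta(\xi))-\eta'(\xi)-c\eta(\xi)}{D\eta(\xi)\beta(\xi)},
\]
so constructing a semi-wavefront on $(-\infty,0]$ is equivalent to solving the regular ODE system \eqref{eq-n} coupled with this relation, subject to the asymptotic condition~\eqref{eq-bc1} and the sign/monotonicity constraints.

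Linearizing at the equilibrium $(0,1)$ yields the characteristic equation $\lambda^{2}+c\lambda-1=0$, whose unique positive root $\lambda_{+}=2/(c+\sqrt{c^{2}+4})$ generates a one-dimensional unstable manifold whose tangent direction is pinned down by Lemma~\ref{l:limits}: $\eta'/\eta\to\lambda_{+}$ and $(1-\beta)/\eta\to(\lambda_{+}+c)/c$. For each $n\in\mathbb{N}$ I would start the IVP at $\xi=-n$ from a point on this local unstable manifold,
\[
\eta_{n}(-n)=\varepsilon_{n},\qquad \eta_{n}'(-n)=\lambda_{+}\varepsilon_{n},\qquad \beta_{n}(-n)=1-\tfrac{\lambda_{+}+c}{c}\varepsilon_{n},
\]
with $\varepsilon_{n}>0$ chosen (using translation invariance of the autonomous system) so that $\eta_{n}(0)$ equals a preassigned value $\eta_{0}\in(0,1)$; because the flow along the unstable manifold is expansive, this forces $\varepsilon_{n}\to 0$. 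Adapting the monotonicity/invariant-region arguments of Lemma~\ref{lem-1} and Remark~\ref{r:semi-w} to the truncated setting, I would verify that the trajectory stays in $\{\eta>0,\,0<\beta<1,\,\eta'>0,\,\beta'<0\}$ throughout $[-n,0]$, with a uniform upper bound $\eta_{n}\le\sqrt{e^{D}}$ of Proposition~\ref{p:equilim} type.

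Endow $C((-\infty,0])$ with the Fréchet topology given by the seminorms of uniform convergence on $[-m,0]$ for $m\in\mathbb{N}$. The uniform bounds on $\eta_{n},\beta_{n}$, together with equations~\eqref{eq-n} and the inverted~\eqref{eq-bis}, give pointwise bounds on $\eta_{n}',\eta_{n}''$ and on $\beta_{n}'$ away from $\beta=0$, yielding equicontinuity on every compact subinterval. A diagonal Arzelà--Ascoli extraction then produces a subsequence converging locally uniformly (with derivatives) to a pair $(\eta,\beta)$ solving~\eqref{eq-n}--\eqref{eq-bis} pointwise on $(-\infty,0]$ with the desired strict monotonicity. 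By Lemma~\ref{lemma-2} the limit is a semi-wavefront provided the boundary condition~\eqref{eq-bc1} is verified.

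The main obstacle, I expect, will be exactly that last step: showing $(\eta,\beta)(-\infty)=(0,1)$, i.e., preventing the limit from escaping the unstable manifold and drifting toward a spurious equilibrium $(\eta^{*},1)$ with $\eta^{*}>0$. For this I would derive a Proposition~\ref{prop-bound}-style quantitative bound $1-\beta_{n}\le C(c)\eta_{n}$ on each $[-n,0]$, combined with a Grönwall estimate applied to the Duhamel identity~\eqref{e:etader} over the tail $(-\infty,-N]$, to get $\eta_{n}(\xi)\to 0$ as $\xi\to-\infty$ uniformly in $n$; the vanishing $\varepsilon_{n}\to 0$ then lets this estimate survive the limit and forces $\eta(-\infty)=0$, whence $\beta(-\infty)=1$ follows from the comparison $1-\beta\le C(c)\eta$. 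The strict positivity of $\eta'$ and strict negativity of $\beta'$ on $(-\infty,0]$ are inherited from the approximants via Remark~\ref{r:semi-w} applied to the limit.
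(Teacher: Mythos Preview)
Your approach differs substantially from the paper's. The paper decouples the problem and applies Schauder's fixed-point theorem: on the convex set $\mathfrak{B}=\{\beta\in C(-\infty,0]:\kappa\le\beta\le 1\}$ the operator $\mathcal{T}$ sends $\beta$ first to the unique solution $\eta$ of the \emph{linear} problem~\eqref{eq-n} with $\eta(0)=\eta_0$, $\eta(-\infty)=0$ (Proposition~\ref{prop-EXUN}, via explicit exponential upper and lower functions), and then to the unique solution $y$ of~\eqref{eq-iv1} with $y(-\infty)=1$ (Proposition~\ref{prop-y}, via a shooting argument on the initial value $y_0$). Compactness and continuity of $\mathcal{T}$ follow from the uniform bounds~\eqref{eq-bound-n} and~\eqref{eq-eta0}; a fixed point $\beta=\mathcal{T}(\beta)$ is the desired semi-wavefront. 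No limiting procedure toward $-\infty$ is needed, because the boundary condition $y(-\infty)=1$ is built directly into the construction of $\mathcal{T}$.

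Your truncation scheme is a plausible alternative, but as written it has a real gap at the equilibrium. The point $(\eta,\eta',\beta)=(0,0,1)$ is \emph{singular} for the coupled system because the $\beta$-equation carries a factor $1/\eta$; there is no classical hyperbolic unstable manifold there. Indeed, the paper's own uniqueness analysis (Proposition~\ref{prop-uniq}) shows that after the desingularization $u=\Phi(\xi)$ the equilibrium has eigenvalues $0,0,-c$, and the semi-wavefront lies on a two-dimensional \emph{center} manifold, not a one-dimensional unstable one. Concretely, your initial data give $\beta_n'(-n)=0$ exactly, so the strict monotonicity $\beta_n'<0$ on $[-n,0]$ cannot be read off the tangent direction; it requires a separate sign argument of the type in Proposition~\ref{prop-4}, which you do not supply.

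A second gap is well-posedness of the truncated problems themselves: you need $\beta_n>0$ all the way to $\xi=0$ to keep the equation regular, but nothing in your setup rules out $\beta_n$ hitting $0$ before $\eta_n$ reaches the prescribed value $\eta_0$. The paper avoids this by working inside $\mathfrak{B}$, where the a priori lower bound $\beta\ge\kappa>0$ is part of the function space and is precisely what makes both the shooting in Proposition~\ref{prop-y} and the compactness of $\mathcal{T}$ go through. Your scheme would need an analogous uniform lower bound on $\beta_n$ over $[-n,0]$, and establishing that is where most of the genuine work would lie.
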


The proof is based on comparison results, i.e., we use upper and lower functions (cf.,~\cite{KiSh-88}). Let $-\infty\leq a_-<a_+\leq+\infty$ and let $f\colon(a_-,a_+)\times\mathbb{R}\times\mathbb{R}\to\mathbb{R}$ be a continuous function. We say that $y$ is a \emph{lower function} of~$u''=f(\xi,u,u')$ if $y\in C^2(a_-,a_+)$ and $y''(\xi)\geq f\left(\xi,y(\xi),y'(\xi)\right)$ for every $\xi\in(a_-,a_+)$. If the $\geq$ sign is replaced by $\leq$, we say that $y$ is an \emph{upper function}.

We first investigate the solvability of~\eqref{eq-n} in $(-\infty, 0]$ with positive $c$ and sufficiently regular  $\beta$ and establish some necessary conditions for the existence of semi-wavefronts of~\eqref{eq-sys} on the half-line~$(-\infty, 0]$.

\begin{proposition}\label{prop-EXUN}
Assume $\beta\in C(-\infty, 0]$ satisfying $0<m\leq\beta(\xi)\leq M<+\infty$ for all $\xi \in (-\infty, 0]$. For every $c>0$ and $\eta_0 \in (0,1)$ equation~\eqref{eq-n} has a unique solution such that $\eta'(\xi)>0$ for all $\xi \in (-\infty, 0]$ and satisfies $\eta(0)=\eta_0$ and $\eta(-\infty)=\eta'(-\infty)=0.$
\end{proposition}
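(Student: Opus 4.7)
The plan is a comparison argument. With $\beta$ prescribed, equation~\eqref{eq-n} is a \emph{linear} second-order ODE for $\eta$ whose operator $L[\eta]:=\eta''+c\eta'-\beta\eta$ has zero-order coefficient $-\beta\le -m<0$, so the maximum principle applies. I would construct explicit exponential barriers on $(-\infty,0]$, solve the problem on each bounded interval $[-n,0]$ by the linear theory, and then let $n\to+\infty$.

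For the barriers, set $\lambda_\pm:=\bigl(-c+\sqrt{c^2+4\alpha_\pm}\bigr)/2>0$ with $\alpha_-:=m$, $\alpha_+:=M$, and define $U(\xi):=\eta_0\,e^{\lambda_-\xi}$ and $V(\xi):=\eta_0\,e^{\lambda_+\xi}$. Direct substitution yields
\[
L[U]=(m-\beta)\,U\le 0,\qquad L[V]=(M-\beta)\,V\ge 0,
\]
so $U$ is an upper and $V$ a lower function for~\eqref{eq-n}, with $0<V\le U$ on $(-\infty,0]$ and $V(0)=U(0)=\eta_0$. On each $[-n,0]$ I would solve the linear two-point problem $L[\eta_n]=0$, $\eta_n(-n)=V(-n)$, $\eta_n(0)=\eta_0$; unique solvability and the sandwich $V\le\eta_n\le U$ on $[-n,0]$ follow from the maximum principle, and pairwise comparison gives $\eta_n\le\eta_{n+1}$. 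A standard $C^2_{\mathrm{loc}}$-compactness argument (the equation is linear with bounded continuous coefficients) then produces a limit $\eta\in C^2(-\infty,0]$ that solves~\eqref{eq-n} with $V\le\eta\le U$; in particular $\eta(0)=\eta_0$ and $\eta(-\infty)=0$.

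To obtain $\eta'>0$, I would argue that if $\eta'(\xi_0)=0$, then the ODE forces $\eta''(\xi_0)=\beta(\xi_0)\eta(\xi_0)>0$ (since $\eta\ge V>0$), making every critical point a strict local minimum; this is incompatible with $\eta(-\infty)=0<\eta_0=\eta(0)$, hence $\eta'>0$ throughout $(-\infty,0]$. To obtain $\eta'(-\infty)=0$, observe that $(\eta'e^{c\xi})'=\beta\eta\,e^{c\xi}\ge 0$, so $\eta'e^{c\xi}$ is non-decreasing with some non-negative limit $L$ at $-\infty$; the integrability $\int_{-\infty}^0\eta'\,\mathrm{d}\xi=\eta_0<+\infty$ forces $L=0$, and the ensuing representation $\eta'(\xi)=e^{-c\xi}\int_{-\infty}^\xi\beta(s)\eta(s)e^{cs}\,\mathrm{d}s$ combined with $\eta\le U$ yields the explicit decay $\eta'(\xi)\le \tfrac{M\eta_0}{c+\lambda_-}\,e^{\lambda_-\xi}\to 0$. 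Uniqueness follows from a direct maximum-principle argument: the difference $w=\eta_1-\eta_2$ of two solutions satisfies $L[w]=0$ with $w(0)=w(-\infty)=0$, and since $\beta>0$ it cannot attain a positive maximum nor a negative minimum in $(-\infty,0)$, so $w\equiv 0$.

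The main technical delicacy I expect is the coordinated asymptotic control at $-\infty$: simultaneously passing to the limit $\eta_n\to\eta$ in the locally compact sense and verifying $\eta(-\infty)=\eta'(-\infty)=0$. The explicit exponential sandwich $V\le\eta_n\le U$ takes care of this, giving uniform exponential decay in $n$ that, through the integral representation of $\eta'$, also transfers to the derivative.
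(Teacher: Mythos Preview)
Your proposal is correct and follows essentially the same route as the paper: the same exponential barriers (your $U,V$ are the paper's $S_1,S_2$), existence via the upper/lower function method (you spell out the truncation-and-limit argument, the paper simply cites~\cite{KiSh-88}), and the identical local-minimum argument for $\eta'>0$. The only noteworthy variation is uniqueness: you apply the maximum principle to the difference $w=\eta_1-\eta_2$, whereas the paper derives a contradiction from the integral identity~\eqref{eq:relation1}--\eqref{eq:relation2}; both are standard for this linear operator and your version is arguably more direct.
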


\begin{proof}
Let
\[
S_1(\xi):=\eta_0 e^{\nu_1\xi} \text{ with } \nu_1:=\frac{2m}{c+\sqrt{c^2+4m}}, \quad \text{for } \xi\in(-\infty, 0],
\]
and
\[
S_2(\xi):=\eta_0 e^{\nu_2\xi} \text{ with } \nu_2:=\frac{2M}{c+\sqrt{c^2+4M}}, \quad \text{for } \xi\in(-\infty, 0].
\]
Notice that $S_1$ is the solution of
\[\begin{cases}
\eta''(\xi)+c\eta'(\xi)-m\eta(\xi)=0,\\
\eta(0)=\eta_0, \quad \eta(-\infty)=0,
\end{cases}\]
and also an upper function of~\eqref{eq-n}. In fact, $S_1''(\xi)=-cS_1'(\xi)+mS_1(\xi)\leq-cS_1'(\xi)+\beta(\xi)S_1(\xi),$ for all $\xi\in(-\infty, 0].$ Analogously, $S_2$ is  a lower function of~\eqref{eq-n}, since $S_2''(\xi)=-cS_2'(\xi)+MS_2(\xi)\geq-cS_2'(\xi)+\beta(\xi)S_2(\xi),$ for all $\xi\in(-\infty, 0].$

Therefore, from the theory of upper and lower functions~\cite{KiSh-88}, a solution $\eta$ to~\eqref{eq-n} exists satisfying
\begin{equation}\label{eq-bound-S12}
S_2(\xi)\leq \eta(\xi)\leq S_1(\xi), \quad \forall \xi\in(-\infty, 0].
\end{equation}
Moreover, by integrating~\eqref{eq-n} in $[\xi, 0],$ with $\xi<0$, we obtain~\eqref{eq:relation1}.
Hence, $\eta'(\xi)$ has limit when $\xi \to -\infty$, and $\eta'(-\infty)=0$, since $\eta$ is bounded.  In particular, we have that
\begin{equation}\label{eq:relation2}
\eta'_0+c\eta_0=\int_{-\infty}^{0}\eta(s)\beta(s)\,\mathrm{d}s.
\end{equation}
It remains to prove that \eqref{eq-n} has a unique solution $\eta$ satisfying $\eta(\xi_0)=\eta_0$ and $\eta(-\infty)=0$. We reason by contradiction and assume   the existence of $\eta_1 \ne \eta_2$ solutions to~\eqref{eq-n} with $\eta_1(0)=\eta_2(0)=\eta_0$ and  $\eta_1(-\infty)=\eta_2(-\infty)=0$. Since $\eta_1'(0)\ne \eta_2'(0)$, we take $\eta_1'(0)>\eta_2'(0)$, so that, by \eqref{eq:relation2},
\[
0>\eta_2'(0)-\eta_1'(0)=\int_{-\infty}^{0}\left[\eta_2(s)-\eta_1(s)\right]\beta(s)\,\mathrm{d}s.
\]
Hence $\eta_2-\eta_1<0$ in some interval $(a,b)\subset(-\infty, 0)$. The relation between $\eta_2'(0)$ and $\eta_1'(0)$ implies the existence of $\xi_1<0$ such that $\eta_1(\xi_1)=\eta_2(\xi_1)$ and
\begin{equation}\label{e:2>1}
\eta_2(s)>\eta_1(s), \quad  \forall s\in (\xi_1, 0).
\end{equation}
Therefore, by applying \eqref{eq:relation1} in $(\xi_1, 0)$ to both $\eta_1$ and $\eta_2$, we have that
\[
0<\int_{\xi_1}^{0}\left[\eta_2(s)-\eta_1(s)\right]\beta(s)\,\mathrm{d}s+\eta_1'(0)-\eta_2'(0)
=-\eta_2'(\xi_1)+\eta_1'(\xi_1),
\]
in contradiction with \eqref{e:2>1}. Hence the solution of \eqref{eq-n} satisfying the boundary conditions is unique.

At last, if there is $\xi_0\leq 0$ such that $\eta'(\xi_0)=0$, by~\eqref{eq-n} we have $\eta''(\xi_0)=\eta(\xi_0)\beta(\xi_0)>0$, hence $\eta$ has a local minimum in $\xi_0$ in contradiction with the boundary condition at $-\infty$. Hence, necessarily, $\eta(\xi)>0$ for all $\xi \in (-\infty, 0].$  This concludes the proof.
\end{proof}

\begin{remark}\label{r:sigma12}
Let $(\eta,\beta)$ be a semi-wavefront of~\eqref{eq-sys} in $(-\infty, 0]$ with wave speed $c>0$. Using similar reasoning as in Proposition~\ref{prop-EXUN}, we can show that
\begin{equation}\label{eq-bound-n}
\Sigma_2(\xi):=\eta_0 e^{\sigma_2\xi}\leq\eta(\xi)\leq \Sigma_1(\xi):=\eta_0 e^{\sigma_1\xi}, \quad \forall \xi\in(-\infty, 0],
\end{equation}
where
\begin{equation}\label{eq-sigma}
\sigma_2:=\frac{2}{c+\sqrt{c^2+4}}	\quad\text{and}\quad \sigma_1:=\frac{2\beta_0}{c+\sqrt{c^2+4\beta_0}}.
\end{equation}
Moreover, since $0<\sigma_1<\sigma_2<1,$ from~\eqref{eq-bound-n}, one can deduce
\begin{equation}\label{eq-eta0}
\eta_0\sigma_1\leq\eta'_0\leq\eta_0\sigma_2.
\end{equation}
\end{remark}

\begin{proposition}\label{prop-2}
Let $(\eta,\beta)$ be a semi-wavefront of~\eqref{eq-sys} in $(-\infty, 0]$ with wave speed $c>0$ and $\eta_0\in\left(0,1\right)$, then $\beta_0\in\left(1-\frac{\eta_0(c+\sigma_2)}{c},1\right).$
\end{proposition}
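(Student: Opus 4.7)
The plan is to use the first-integral-type identity \eqref{eq-bis} provided by Lemma~\ref{lemma-2}, evaluated at the endpoint $\xi=0$, combined with the sign information on $\beta'_0$ and the explicit estimate for $\eta'_0$ recorded in Remark~\ref{r:sigma12}.

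First, the upper bound $\beta_0<1$ is immediate: by Remark~\ref{r:semi-w}, $\beta$ is strictly decreasing on $(-\infty,0]$ with $\beta(-\infty)=1$, so $\beta_0=\beta(0)<1$.

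For the lower bound, I would evaluate \eqref{eq-bis} at $\xi=0$, which, with the notation in \eqref{eq-0}, reads
\[
D\eta_0\beta_0\beta'_0 + c(\beta_0-1) + \eta'_0 + c\eta_0 = 0.
\]
Since $\eta_0,\beta_0>0$ and $\beta'_0<0$ by Remark~\ref{r:semi-w}, the first term is strictly negative, hence
\[
c(\beta_0-1) + \eta'_0 + c\eta_0 = -D\eta_0\beta_0\beta'_0 > 0,
\]
which, after dividing by $c>0$ and rearranging, gives
\[
\beta_0 > 1 - \frac{\eta'_0 + c\eta_0}{c}.
\]
Finally, I would invoke the upper estimate $\eta'_0 \le \eta_0 \sigma_2$ from \eqref{eq-eta0} in Remark~\ref{r:sigma12} to conclude
\[
\beta_0 > 1 - \frac{\eta_0\sigma_2 + c\eta_0}{c} = 1 - \frac{\eta_0(c+\sigma_2)}{c},
\]
which is the required bound.

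There is no real obstacle here: the only subtlety is the strict inequality $\beta'_0<0$, which is guaranteed by Remark~\ref{r:semi-w} (and not just the non-strict one), making the first term in \eqref{eq-bis} strictly negative and forcing the strict inequality in the lower bound for $\beta_0$. The bound $\eta'_0 \le \eta_0\sigma_2$ coming from the upper-solution $\Sigma_1$ in Remark~\ref{r:sigma12} is exactly the ingredient needed to reintroduce $\sigma_2$ and match the stated inequality.
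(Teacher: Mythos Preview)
Your proof is correct and follows essentially the same approach as the paper: both evaluate the identity \eqref{eq-bis} at $\xi=0$, use the strict negativity of $\beta'_0$, and then invoke the bound $\eta'_0\le\eta_0\sigma_2$ from \eqref{eq-eta0}. The only cosmetic differences are that the paper argues the lower bound by contradiction (assuming $\beta_0\le 1-\eta_0(c+\sigma_2)/c$ and deducing $\beta'_0\ge 0$) and splits off the trivial case $\eta_0\ge c/(c+\sigma_2)$, whereas your direct chain of inequalities handles all $\eta_0\in(0,1)$ uniformly.
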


\begin{proof}
We can distinguish between two cases: the first case is when $\frac{c}{c+\sigma_2}\leq\eta_0<1$, and the second case is when $0<\eta_0<\frac{c}{c+\sigma_2}$. In the first case, we see that $1-\frac{\eta_0(c+\sigma_2)}{c}\leq0$, and thus the thesis is trivially satisfied since $\beta_0>0$. As for the second case, by~\eqref{eq-bis} and the positivity of $\beta_0$, we have that
\[\beta'_0=\frac{c-c\eta_0-c\beta_0-\eta'_0}{D\eta_0\beta_0}.
\]
Since $\eta_0'\leq \eta_0\sigma_2$ (see \eqref{eq-eta0}), if  $\beta_0\leq 1-\frac{\eta_0(c+\sigma_2)}{c}$ then $\beta'_0\geq0$, which is in contradiction with $\beta'<0$ in $(-\infty, \tau)$ (see Lemma~\ref{lem-1}$(v)$).
\end{proof}

\begin{remark}\label{rem-b0}
If $\eta_0\in\left(0,\frac{c}{c+\sigma_2}\right)$, then $1-\frac{\eta_0(c+\sigma_2)}{c}>0$. By applying Proposition~\ref{prop-2}, it thus follows $\beta_0>1-\frac{\eta_0(c+\sigma_2)}{c}>0,$ and so $\tau>0$.
\end{remark}

\subsection{Existence result on the negative half-line for every speed}\label{subsection-4-1}

In this section, we fix $c>0,$ and $\eta_0 \in \left(0, \frac{c}{c+\sigma_2}\right)$, where $\sigma_2$ is defined in~\eqref{eq-sigma}. Next, we set
\begin{equation}\label{e:defb}
  \kappa:=1-\frac{\eta_0(c+\sigma_2)}{c}.
\end{equation}
Then, $\kappa>0$ and, according to Remark~\ref{rem-b0}, $\beta_0 > \kappa$ for every semi-wavefront $(\eta,\beta)$ of~\eqref{eq-sys} in $(-\infty, 0]$ with wave speed $c$. We define the following set:
\[
\mathfrak{B}:=\Big\{\beta\in C(-\infty, 0] \colon \kappa\leq \beta(\xi)\leq 1,\, \forall \xi\in(-\infty, 0]	\Big\}.
\]
It is noteworthy that $\mathfrak{B}$ is a non-empty, closed, and convex subset of the Fréchet space $C(-\infty, 0]$. Moreover, by Proposition \ref{prop-EXUN}, for every $\beta \in \mathfrak{B}$, it is unique the solution on $(-\infty, 0]$ to~\eqref{eq-n} with $\eta(0)=\eta_0$ and $\eta(-\infty)=0$ and it satisfies $\eta'(\xi)>0$ for all $\xi \in (-\infty, 0]$ and $\eta'(-\infty)=0$. We denote by $\mathfrak{N}$ the set of all such functions $\eta$, namely
\begin{equation}\label{e:N}
\mathfrak{N}:=\Big\{
\text{solution to } \begin{cases}
\eta''(\xi)+c\eta'(\xi)=\beta(\xi) \eta(\xi), \quad \xi\in(-\infty,0], \\
\eta(0)=\eta_0, \quad \eta(-\infty)=0,
\end{cases} \quad \text{with }\beta \in \mathfrak{B}\Big\}.
\end{equation}
We consider now the initial value problem
\begin{subnumcases}{\label{eq-iv}}
	y'(\xi)=\frac{c\left(1-y(\xi)\right)-\eta'(\xi)-c\eta(\xi)}{D\eta(\xi)y(\xi)}, \label{eq-iv1} \\
	y(0)=y_0, \label{eq-iv2}
\end{subnumcases}
with $\eta \in \mathfrak{N}$ and $y_0 \in (0,1)$ and show several qualitative properties of its solution before proving Proposition \ref{prop-ex1}.

\begin{proposition}\label{prop-4}
Given $\eta \in \mathfrak{N}$ and  $y_0 \in (0,1)$, let $y$ be the solution of~\eqref{eq-iv} defined on its maximal existence interval $I\subseteq(-\infty, 0]$. If there is $\xi_0\in(-\infty, 0]$ such that $y'(\xi_0)\geq0$, then $y'(\xi)>0$ for all $\xi\in I$ with $\xi<\xi_0$.
\end{proposition}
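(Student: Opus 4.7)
The idea is to recast \eqref{eq-iv1} as a sign-tracking identity and to exploit the strict positivity of the function $\beta\in\mathfrak{B}$ associated to $\eta$ via \eqref{eq-n}. Set
\[
\Phi(\xi):=c\bigl(1-y(\xi)\bigr)-\eta'(\xi)-c\eta(\xi), \qquad \xi\in I,
\]
so that \eqref{eq-iv1} reads $D\eta(\xi)y(\xi)y'(\xi)=\Phi(\xi)$ on $I$. Proposition~\ref{prop-EXUN} gives $\eta(\xi)>0$ on $(-\infty,0]$, and the singularity of the right-hand side of \eqref{eq-iv1} at $y=0$ together with $y_0\in(0,1)$ forces $y(\xi)>0$ throughout $I$. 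Hence $\mathrm{sgn}\,y'(\xi)=\mathrm{sgn}\,\Phi(\xi)$ on $I$, so the zeros of $y'$ coincide with those of $\Phi$.

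\textbf{Sign change at a zero.} Differentiating $\Phi$ and invoking \eqref{eq-n} (i.e.\ $\eta''+c\eta'=\beta\eta$), one finds $\Phi'(\xi)=-cy'(\xi)-\beta(\xi)\eta(\xi)$ on $I$. At any $\xi^\ast\in I$ with $y'(\xi^\ast)=0$ this reduces to $\Phi'(\xi^\ast)=-\beta(\xi^\ast)\eta(\xi^\ast)<0$, thanks to $\beta\ge\kappa>0$ on $(-\infty,0]$ (by the definition of $\mathfrak{B}$) and $\eta(\xi^\ast)>0$. Consequently $\Phi$ is strictly decreasing across the value $0$ at $\xi^\ast$: there exists $\delta>0$ with $\Phi>0$ on $(\xi^\ast-\delta,\xi^\ast)$ and $\Phi<0$ on $(\xi^\ast,\xi^\ast+\delta)$. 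Equivalently, $y'>0$ immediately to the left of every zero and $y'<0$ immediately to the right; thus $y'$ can only switch sign from $+$ to $-$ as $\xi$ increases through a zero.

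\textbf{Conclusion via contradiction.} Suppose, for contradiction, that the set $A:=\{\xi\in I\colon \xi<\xi_0,\ y'(\xi)\le 0\}$ is non-empty, and let $\xi^\ast:=\sup A\in(-\infty,\xi_0]$. If $\xi^\ast<\xi_0$, maximality of $\xi^\ast$ gives $y'>0$ on $(\xi^\ast,\xi_0]$, while a sequence in $A$ converging to $\xi^\ast$ yields $y'(\xi^\ast)\le 0$ by continuity; letting $\xi\to\xi^\ast$ from within $(\xi^\ast,\xi_0]$ yields $y'(\xi^\ast)\ge 0$, so $y'(\xi^\ast)=0$. The previous paragraph then produces $y'<0$ just to the right of $\xi^\ast$, contradicting $y'>0$ on $(\xi^\ast,\xi_0]$. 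If instead $\xi^\ast=\xi_0$, a sequence in $A$ converging to $\xi_0^-$ gives $y'(\xi_0)\le 0$ by continuity, hence $y'(\xi_0)=0$ together with the hypothesis; the previous paragraph then produces $y'>0$ on some interval $(\xi_0-\delta,\xi_0)$, so $A\cap(\xi_0-\delta,\xi_0)=\emptyset$, and because $\xi_0\notin A$ this forces $\sup A\le\xi_0-\delta<\xi_0$, a contradiction. The only non-routine ingredient is the short sign computation $\Phi'(\xi^\ast)<0$; the remainder is a standard continuity-and-supremum argument, so I do not foresee any further obstacles.
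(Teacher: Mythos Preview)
Your proof is correct and follows essentially the same approach as the paper. The paper defines $\Gamma:=-\Phi$, observes $\Gamma(\xi_0)\le 0$ and $\Gamma'(\xi_0)>0$ (your computation $\Phi'=-cy'-\beta\eta$ with the sign flipped), and then argues by contradiction from a hypothetical nearest zero $\xi_1<\xi_0$ of $\Gamma$; your supremum-of-$A$ packaging is a minor stylistic variant of the same idea.
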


\begin{proof}
Let $\Gamma\colon\mathbb{R}\to\mathbb{R}$ be the differentiable function defined as
\[\Gamma(\xi):=c\left(y(\xi)-1\right)+\eta'(\xi)+c\eta(\xi).
\]
Notice that $\eta>0$ in $(-\infty, 0]$ and $y>0$ in $I$. Hence, by assumption, it follows that $\Gamma(\xi_0)\leq0$. Since $\Gamma'(\xi)=cy'(\xi)+\eta''(\xi)+c\eta'(\xi)$ and $\eta\in\mathfrak{N}$, we also have $\Gamma'(\xi_0)=cy'(\xi_0)+\eta(\xi_0)\beta(\xi_0)$ for some $\beta \in \mathfrak{B}$, hence $\Gamma'(\xi_0)>0$. By contradiction, let us suppose that there exists $\xi_1\in I$ with $\xi_1<\xi_0$ such that $\Gamma(\xi)<0$ for all $\xi\in(\xi_1,\xi_0)$ and $\Gamma(\xi_1)=0$. Then, we can infer that $y'(\xi)>0$ for all $\xi\in(\xi_1,\xi_0)$.
Hence, it follows that $\Gamma'(\xi)>0$, for all $\xi\in(\xi_1,\xi_0)$. Accordingly, $0=\Gamma(\xi_1)<\Gamma(\xi_2)\leq 0$, which is a contradiction.
\end{proof}

\begin{proposition}\label{prop-6}
Given $\eta\in\mathfrak{N}$ and $y_0 \in (0,1)$, let $y$ be the solution of~\eqref{eq-iv} defined on its maximal existence interval $I\subseteq(-\infty, 0]$. If there is $\xi_0\in(-\infty, 0]$ such that $y(\xi_0)=1$ then $I=(-\infty, 0]$ and $y(\xi)>1$ for all $\xi<\xi_0$.
\end{proposition}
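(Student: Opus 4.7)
The plan is to first read off the value of $y'(\xi_0)$ directly from \eqref{eq-iv1}: since $y(\xi_0)=1$, the term $c(1-y(\xi_0))$ vanishes, so
\[
y'(\xi_0)=-\frac{\eta'(\xi_0)+c\eta(\xi_0)}{D\eta(\xi_0)}<0,
\]
where the sign follows from $\eta,\eta'>0$ on $(-\infty,0]$ (using $\eta\in\mathfrak{N}$ and Proposition~\ref{prop-EXUN}). In particular $y(\xi)>1$ in a left-neighbourhood of $\xi_0$ inside $I$.

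Next, I would show $y(\xi)>1$ for every $\xi<\xi_0$ in $I$ by contradiction. Suppose there is some $\xi_1\in I$ with $\xi_1<\xi_0$ at which $y(\xi_1)=1$; take the largest such $\xi_1$, so that $y>1$ on $(\xi_1,\xi_0)$. Applying the same computation as above at $\xi_1$ yields $y'(\xi_1)<0$, which implies $y(\xi)<1$ just to the right of $\xi_1$, contradicting $y>1$ on $(\xi_1,\xi_0)$. Hence no such $\xi_1$ exists, and $y(\xi)>1$ for all $\xi<\xi_0$ within $I$.

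Finally, I would extend the maximal existence interval down to $-\infty$ by an a priori bound on $y'$. Suppose for contradiction that the left endpoint of $I$ is some $a>-\infty$. Then on any compact $[a,\xi_0]$, the function $\eta$ is continuous and strictly positive (hence bounded below by a positive constant, e.g.\ via \eqref{eq-bound-n}), and $\eta'$ is bounded. Combining this with $y>1$, a direct estimate of the right-hand side of \eqref{eq-iv1} yields $|y'(\xi)|\le C$ on $[a,\xi_0]\cap I$ for some constant $C$ depending only on $\eta$ and $c,D$. Such a Lipschitz bound prevents $y$ from blowing up or leaving the region $\{y>0\}$ as $\xi\to a^+$, contradicting maximality of $I$. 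Therefore $I=(-\infty,0]$.

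The main obstacle is the last step: making sure the a priori bound on $y'$ really is uniform down to any prospective endpoint. The key is that the only singularities of the right-hand side of \eqref{eq-iv1} come from $\eta$ or $y$ hitting zero, and the previous step already guarantees $y>1$ on the region of interest, while $\eta$ is uniformly positive on any compact subinterval of $(-\infty,0]$ by Proposition~\ref{prop-EXUN}. Once this is in hand, standard continuation for ODEs closes the argument.
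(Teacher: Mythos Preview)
Your proof is correct and follows essentially the same route as the paper: both compute $y'(\xi_0)=-(\eta'(\xi_0)+c\eta(\xi_0))/(D\eta(\xi_0))<0$ directly from \eqref{eq-iv1}, then derive a contradiction at a putative largest $\xi_1<\xi_0$ with $y(\xi_1)=1$. The only difference is that the paper simply asserts $I=(-\infty,0]$ once $y>1$ is established, whereas you spell out the continuation argument via an a priori bound on $|y'|$; your added detail is sound (note that the relevant lower bound on $\eta$ comes from \eqref{eq-bound-S12} in Proposition~\ref{prop-EXUN} rather than \eqref{eq-bound-n}, but this is cosmetic).
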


\begin{proof}
Since $\eta\in\mathfrak{N}$ and $y(\xi_0)=1$, from~\eqref{eq-iv1} and Proposition \ref{prop-EXUN}, there exists $\beta \in \mathfrak{B}$ such that $D\eta(\xi_0)y'(\xi_0)=-\eta'(\xi_0)-c\eta(\xi_0)<0$, implying $y'(\xi_0)<0$. By contradiction, let us suppose that there exists $\xi_1\in I$ with $\xi_1<\xi_0$ such that $y(\xi)>1$ for $\xi\in(\xi_1,\xi_0]$ and $y(\xi_1)=1$.
Then, as before, we obtain that $y'(\xi_1)<0$, which is a contradiction. Hence $I=(-\infty, 0]$ and $y(\xi)>1$ for $\xi<\xi_0$.
\end{proof}

\begin{corollary}\label{prop-corbis}
Given $\eta\in\mathfrak{N}$, there is $\delta>0$ such that for any $y_0\in(1-\delta,1]$ the solution $y$ of~\eqref{eq-iv} defined on its maximal existence interval $I\subseteq(-\infty, 0]$ is such that $y(\xi_0)=1$ for some $\xi_0\in I.$
\end{corollary}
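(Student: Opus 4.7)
The strategy is to use the solution $\bar y$ of \eqref{eq-iv} corresponding to the initial condition $\bar y(0)=1$ as a reference, and then to invoke continuous dependence of ODE solutions on initial data to conclude that solutions starting slightly below $1$ must cross the value $1$ as $\xi$ decreases from $0$.

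First, I would apply Proposition~\ref{prop-6} with $\xi_0=0$ to the reference solution $\bar y$, obtaining that $\bar y$ is defined on the whole half-line $(-\infty,0]$ and satisfies $\bar y(\xi)>1$ for every $\xi<0$. Next, I would fix an auxiliary point $\xi_1<0$, so that $\bar y(\xi_1)>1$, and set $\varepsilon:=(\bar y(\xi_1)-1)/2$. Since $\eta\in\mathfrak{N}$ is of class $C^2$ and strictly positive on $(-\infty,0]$ (see Proposition~\ref{prop-EXUN}), the right-hand side of \eqref{eq-iv1} is locally Lipschitz in $y$ on any set of the form $\{(\xi,y)\colon \xi\in[\xi_1,0],\, y\ge r\}$ with $r>0$; moreover $\bar y$ is bounded and bounded away from $0$ on the compact interval $[\xi_1,0]$. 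Standard continuous dependence then yields $\delta>0$ such that, for every $y_0\in[1-\delta,1]$, the solution $y$ of \eqref{eq-iv} with $y(0)=y_0$ is defined on the whole of $[\xi_1,0]$ and satisfies $|y(\xi)-\bar y(\xi)|<\varepsilon$ for all $\xi\in[\xi_1,0]$.

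The conclusion then follows immediately. For $y_0=1$ take $\xi_0=0$; for $y_0\in(1-\delta,1)$ one has $y(0)=y_0<1$, while $y(\xi_1)>\bar y(\xi_1)-\varepsilon=1+\varepsilon>1$, so the intermediate value theorem produces $\xi_0\in(\xi_1,0)$ with $y(\xi_0)=1$, and such $\xi_0$ evidently belongs to $I$. The main point to check carefully is the continuous-dependence step, since the right-hand side of \eqref{eq-iv1} becomes singular as $y\to 0^+$; however, this is not a real obstacle, because the reference solution $\bar y$ stays above $1$ on the compact interval $[\xi_1,0]$, so a sufficiently narrow tube around $\bar y$ remains in the regular region, say $\{y>1/2\}$, of the ODE, and no finite-time blow-up of nearby solutions can occur on $[\xi_1,0]$.
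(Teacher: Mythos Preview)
Your argument is correct and follows essentially the same route as the paper's own proof: both use Proposition~\ref{prop-6} to see that the reference solution starting at $1$ exceeds $1$ for all $\xi<0$, pick a point $\xi_1<0$ and an $\varepsilon$ with $\bar y(\xi_1)-\varepsilon>1$, and then apply continuous dependence on initial data plus the intermediate value theorem. Your additional remark that the singularity of the right-hand side at $y=0$ is harmless because the reference solution stays above $1$ on the compact interval is a worthwhile clarification that the paper leaves implicit.
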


\begin{proof}
Let $\check{y}$ be the solution of~\eqref{eq-iv} associated with $y(0) = 1$ defined on its maximal existence interval $\check{I}$. Then, by Proposition~\ref{prop-6}, we have $\check{I}=(-\infty, 0]$ and $\check{y}(\xi) > 1$ for every $\xi <0$. Fix $\overline{\xi}<0$ and chose  $\epsilon > 0$ such that $\check{y}(\overline{\xi})-\epsilon > 1$. Using the continuous dependence of the solution on initial data we find a corresponding $\delta > 0$ such that for any $\rho \in (1-\delta,1)$ the solution $y$ of~\eqref{eq-iv} with $y(0) = \rho$ is defined in $[\overline{\xi}, 0]$ and satisfies $\check{y}(\xi) - \epsilon < y(\xi) < \check{y}(\xi)$ for every $\xi \in [\overline{\xi}, 0]$. Therefore, there must exist $\xi_0\in (\overline{\xi}, 0)$ where $y(\xi_0) = 1$. This completes the proof.
\end{proof}

\begin{proposition}\label{prop-y}
Let $\kappa$ be defined as in \eqref{e:defb}. For every  $\eta\in\mathfrak{N}$, there is a unique $y_0\in\left(\kappa,1\right)$ such that the solution $y$ of~\eqref{eq-iv} is defined on $(-\infty, 0]$, satisfies $y(-\infty)=1$ and $y'(\xi)<0$ for all $\xi \in (-\infty, 0]$ 
\end{proposition}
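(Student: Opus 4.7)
The plan is a shooting argument on the initial value $y_0:=y(0)$. Fix $\eta\in\mathfrak{N}$ and, for each $y_0\in(\kappa,1)$, let $y(\cdot;y_0)$ be the solution of~\eqref{eq-iv} and $I(y_0)\subseteq(-\infty,0]$ its maximal existence interval. I would partition
\[
\mathcal{A}:=\bigl\{y_0\in(\kappa,1)\colon y(\xi_1;y_0)=1\text{ for some }\xi_1\in I(y_0)\bigr\},
\]
\[
\mathcal{B}:=\bigl\{y_0\in(\kappa,1)\colon y'(\xi_1;y_0)>0\text{ for some }\xi_1\in I(y_0)\bigr\},
\]
and $\mathcal{C}:=(\kappa,1)\setminus(\mathcal{A}\cup\mathcal{B})$, aiming to exhibit a unique $y_0^\star\in\mathcal{C}$ that produces the desired solution.

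The first observation is that at any $\xi_1$ with $y(\xi_1)=1$, equation~\eqref{eq-iv1} together with the positivity of $\eta,\eta'$ (Proposition~\ref{prop-EXUN}) forces $y'(\xi_1)=-(\eta'(\xi_1)+c\eta(\xi_1))/(D\eta(\xi_1))<0$; combined with Proposition~\ref{prop-4} and a short case analysis on the relative position of a hypothetical $\xi_2$ with $y'(\xi_2)>0$, this yields $\mathcal{A}\cap\mathcal{B}=\emptyset$. Non-crossing of distinct trajectories of~\eqref{eq-iv} makes $\mathcal{A}$ upward-closed in $(\kappa,1)$, and both $\mathcal{A}$ and $\mathcal{B}$ are open by continuous dependence and strict transversality. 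Corollary~\ref{prop-corbis} provides $(1-\delta,1)\subset\mathcal{A}$. Evaluating~\eqref{eq-iv1} at $\xi=0$, $y_0=\kappa$ and using~\eqref{e:defb} together with~\eqref{eq-eta0} gives
\[
\left.y'(0)\right|_{y_0=\kappa}=\frac{\eta_0\sigma_2-\eta'_0}{D\eta_0\kappa}\geq 0,
\]
so by continuity $(\kappa,\kappa+\varepsilon)\subset\mathcal{B}$ for some $\varepsilon>0$.

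Set $y_0^\star:=\inf\mathcal{A}$. The upward-closed/open structure of $\mathcal{A}$ yields $\mathcal{A}=(y_0^\star,1)$, with $y_0^\star\geq\kappa+\varepsilon>\kappa$; disjointness and openness of $\mathcal{B}$ rule out $y_0^\star\in\mathcal{B}$, so $y_0^\star\in\mathcal{C}\subset(\kappa,1)$. For this $y_0^\star$, Proposition~\ref{prop-4} forbids any interior zero of $y'$ (otherwise $y'>0$ strictly earlier, placing $y_0^\star$ in $\mathcal{B}$), so $y'<0$ on $I(y_0^\star)$. Monotonicity, the bound $y\in(y_0^\star,1)$, and the positivity of $\eta$ on $(-\infty,0]$ preclude singularities, giving $I(y_0^\star)=(-\infty,0]$. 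If $\ell:=\lim_{\xi\to-\infty}y(\xi)$ were strictly less than $1$, the numerator of~\eqref{eq-iv1} would tend to $c(1-\ell)>0$ while $D\eta\,y\to 0^+$, forcing $y'\to+\infty$ in contradiction with $y'<0$; hence $\ell=1$.

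For uniqueness, suppose $y_0^{(1)}<y_0^{(2)}$ both lie in $\mathcal{C}$ and set $y_i:=y(\cdot;y_0^{(i)})$, $w:=y_2-y_1>0$. A direct manipulation of~\eqref{eq-iv1} yields
\[
\frac{w'(\xi)}{w(\xi)}=\frac{\eta'(\xi)+c\eta(\xi)-c}{D\eta(\xi)\,y_1(\xi)\,y_2(\xi)},
\]
whose right-hand side tends to $-\infty$ as $\xi\to-\infty$ (numerator $\to -c$, while $\eta\to 0$ exponentially by Proposition~\ref{prop-EXUN}). Integrating from $\xi$ to $0$ gives $\ln w(0)-\ln w(\xi)=\int_\xi^0(w'/w)\,\mathrm{d}s\to-\infty$, so $\ln w(\xi)\to+\infty$ and $w(\xi)\to+\infty$, contradicting $0<w<1$. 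The main obstacle I anticipate is the degenerate sub-case $\eta'_0=\eta_0\sigma_2$ (which corresponds to $\beta\equiv 1$, $\eta(\xi)=\eta_0 e^{\sigma_2\xi}$), for which $\left.y'(0)\right|_{y_0=\kappa}=0$; establishing $\mathcal{B}\ne\emptyset$ near $\kappa$ then requires a finer local analysis of trajectories relative to the nullcline $\phi(\xi):=1-(1-\kappa)e^{\sigma_2\xi}$ of~\eqref{eq-iv1} rather than the sign of $y'(0)$.
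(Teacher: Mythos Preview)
Your shooting strategy via $\mathcal{A}$ matches the paper's; the paper does not introduce $\mathcal{B}$ explicitly, but the dichotomy ``reach~$1$'' versus ``develop positive derivative'' is exactly what drives its Step~3. Your uniqueness argument, however, is a genuine alternative. The paper observes that if two solutions $y_\alpha,\hat y$ both converge to $1$ at $-\infty$, then $w=y_\alpha-\hat y$ is positive with $w'(0)<0$ and $w(-\infty)=0$, hence has an interior critical point $\xi_0$; but $w'(\xi_0)=0$ forces $\eta'(\xi_0)+c\eta(\xi_0)=c$, contradicting the strict inequality $\eta'+c\eta<c$ on $(-\infty,0]$. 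You exploit the same inequality through the explicit formula $w'/w=(\eta'+c\eta-c)/(D\eta\, y_1 y_2)$, whose non-integrability at $-\infty$ (by the exponential decay of $\eta$ from Proposition~\ref{prop-EXUN}) forces $w\to+\infty$; this is cleaner and avoids the critical-point detour. Regarding the degenerate case $\eta_0'=\eta_0\sigma_2$: your concern is legitimate, since then $y'(0;\kappa)=0$ and the sign of $y'(0)$ alone does not place a neighborhood of $\kappa$ in $\mathcal{B}$. But the fix is simpler than a nullcline analysis: Proposition~\ref{prop-4} already gives $y'(\bar\xi;\kappa)>0$ strictly at any $\bar\xi<0$ in the domain of the $\kappa$-solution, and continuous dependence applied at $\bar\xi$ (rather than at $0$) immediately yields $(\kappa,\kappa+\varepsilon)\subset\mathcal{B}$. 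The paper instead deduces $\alpha>\kappa$ a posteriori, by first proving $y_\alpha'(0)<0$ in its Step~3 and then contrasting with $y'(0;\kappa)\geq 0$.
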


\begin{proof}
Let $\eta\in\mathfrak{N}$ be fixed. Introduce the following set:
\[
A:=\left\{y_0\in[\kappa, 1]\colon \text{$y$ solution of~\eqref{eq-iv} on $I$ satisfies } y(\xi_0)=1 \text{ for some }\xi_0\in I	\right\}.
\]
We aim to prove that $A$ is a non-empty interval such that $\alpha:=\inf A>\kappa$. This way, by denoting with $y_{\alpha}$ the solution of~\eqref{eq-iv1} with $y_0=\alpha$ defined on its maximal existence interval $I_{\alpha}\subseteq(-\infty, 0]$, we can prove that $I_{\alpha}=(-\infty, 0],$ and $y_{\alpha}'(\xi)<0$ for all $\xi \leq0$. At last $y_{\alpha}$ is the unique solution of~\eqref{eq-iv1} satisfying the boundary condition $y_{\alpha}(-\infty)=1$. We now break the proof into four steps.

\smallskip
\noindent\textit{Step 1: $A$ is non-empty and $\alpha>\kappa$}. To show that $A$ is non-empty, we observe that by Corollary~\ref{prop-corbis}, there exists some $\delta > 0$ sufficiently small such that $(1-\delta,1)\subseteq A$. Arguing as in the proof of Proposition~\ref{prop-2}, one can prove that the solution $y$ of~\eqref{eq-iv} with $y(0)=\kappa$ yields $y'(0)\geq0$. Thus, by Proposition~\ref{prop-4}, $\kappa\not\in A$.

\smallskip
\noindent\textit{Step 2: $A$ is an interval.} We can show that $A$ is an interval by proving that if $\rho\in A$, then $(\rho,1)\subseteq A$. If $\rho \in A$ and $y_{\rho}$ denotes the solution to~\eqref{eq-iv1} with $y_{\rho}(0)=\rho$, there exists $\xi_{\rho}<0$ such that $y_{\rho}(\xi_{\rho})=1$. If $\rho_1\in(\rho,1)$, the solution $y_{\rho_1}$ to~\eqref{eq-iv1} with $y_{\rho_1}(0)=\rho_1$ is defined in $[\xi_{\rho}, 1]$ and the  uniqueness of the solution of problem of~\eqref{eq-iv} guarantees the existence of $\xi_{\rho_1}\in\left(\xi_\rho,0\right)$ such that $y_{\rho_1}(\xi_{\rho_1})=1$, meaning that $\rho_1\in A$.

\smallskip
\noindent\textit{Step 3: Existence and asymptotic properties.} We aim to show that $I_{\alpha}=(-\infty, 0]$; $y_{\alpha}(-\infty)=1$; and $y_{\alpha}'(-\infty)=0$.

First, we claim that $y_{\alpha}(\xi)<1$, for every $\xi\in I_{\alpha}$. Let us suppose by contradiction that there is $\xi_0\in I_{\alpha}$ such that $y_{\alpha}(\xi_0)=1$. From Proposition~\ref{prop-6}, $y_{\alpha}(\xi)>1$, for every $\xi<\xi_0$. By arguing as in Corollary~\ref{prop-corbis} and exploiting the continuous dependence of the solutions by the initial data, we can obtain a contradiction with the definition of $\alpha$.

Next, we claim that $y_{\alpha}'(\xi)<0$, for every $\xi\in I_{\alpha}$. We argue by a contradiction and assume the existence of $\xi_1 \in (-\infty, 0]$ satisfying $y_{\alpha}'(\xi_1)\geq0$.  From Proposition~\ref{prop-4}, $y_{\alpha}'(\xi)>0$, for every $\xi \in I_{\alpha}$ with $\xi<\xi_1$. Thus, there is $\xi_2\leq\xi_1$ such that $y_{\alpha}(\xi_2)<y_{\alpha}(0)=\alpha$. By the continuous dependence of the solutions by the initial data, there exists $\delta>0$ such that for every $\rho\in(\alpha,\alpha+\delta)$ we have $y_{\alpha}(\xi_2)<y_{\rho}(\xi_2)\leq\alpha<\rho$, where $y_{\rho}$ is the solution of~\eqref{eq-iv1} with $y_{\rho}(0)=\rho$ defined on its maximal existence interval $I_{\rho}\supset (\xi_2, 0]$. By the mean value theorem, there is $\xi_3\in(\xi_2,0)$ such that $y_{\rho}'(\xi_3)>0$. Thanks to Proposition~\ref{prop-4}, we can infer that $y_{\rho}'(\xi)>0$, for every $\xi\leq\xi_3$. Then, $\rho\not\in A$, which is in contradiction with the definition of $\alpha$. It follows that $I_{\alpha}=(-\infty, 0]$; $\alpha\leq y_{\alpha}(\xi)<1,$ implying that $\alpha \not \in A$ and $y_{\alpha}'(\xi)<0$, for all $\xi\in I_{\alpha}$.

Hence, there is $y_{\alpha}(-\infty) \in [\alpha, 1].$ When, in particular,  $y_{\alpha}(-\infty) <1$ by~\eqref{eq-iv1} we get $y_{\alpha}'(-\infty)=+\infty$ in contradiction with the boundedness of $y_{\alpha}$. It follows that $y_{\alpha}(-\infty)=1.$

\smallskip
\noindent\textit{Step 4: Uniqueness.} Assume, by a contradiction, the existence of a further solution to \eqref{eq-iv1}, denoted $\hat{y}$, with $\hat{y}(-\infty)=1$. Hence, $\hat{y}(0)\in (\kappa, \alpha)$, by \textit{Step~3} and the unique solvability of the initial value problem associated to \eqref{eq-iv}. Notice that the estimate
\[
\frac{c(1-\hat{y}(0))-\eta'_0-c\eta_0}{D\eta_0\hat{y}(0)}>\frac{c(1-y_{\alpha}(0))-\eta'_0-c\eta_0}{D\eta_0y_{\alpha}(0)}
\]
is equivalent to $\eta'_0+c\eta_0<c$. By~\eqref{eq-bound-S12}, we have that $\eta_0'\leq\eta_0\nu_2$ with $M=1$, and so $\eta_0'\leq\eta_0\sigma_2$ with $\sigma_2$ as in~\eqref{eq-sigma}. This implies $\eta_0'+c\eta_0<c$ since we assumed $\eta_0 \in  (0, \frac{c}{c+\sigma_2})$. According to~\eqref{eq-iv1} we infer that $\hat{y}'(0)>y_{\alpha}'(0)$.
The function $y_{\alpha}-\hat{y}$ is then positive and decreasing in $0$. By the boundary conditions at $-\infty$ there must exist a value $\xi_0 <0$ such that
\begin{equation}\label{eq:tildehat}
y_{\alpha}(\xi_0)>\hat{y}(\xi_0)\quad \text{and} \quad y_{\alpha}'(\xi_0)=\hat{y}'(\xi_0).
\end{equation}
We show that \eqref{eq:tildehat} leads to a contradiction and then the unique solvability is proved. In fact, by  \eqref{eq-iv1},  the equality $y_{\alpha}'(\xi_0)=\hat{y}'(\xi_0)$ is equivalent to $\eta'(\xi_0)+c\eta(\xi_0)=c$ which is not possible since $\eta''(\xi)+c\eta'(\xi)=\eta(\xi)\beta(\xi)>0$ for all $\xi \in (-\infty, 0]$ implying that $\eta'+c\eta$ is an increasing function in $(-\infty, 0]$ and then $\eta'(\xi_0)+c\eta(\xi_0)<\eta'_0+c\eta_0<c$.
\end{proof}

We will now prove Proposition~\ref{prop-ex1} and establish the existence of a semi-wavefront of~\eqref{eq-sys} in the half-line~$(-\infty, 0]$ using a fixed-point argument. Before doing so, we need to recall some basic notions. We note that a subset $X$ of $C(-\infty, 0]$ is \emph{relatively compact if and only if it is bounded and their functions are pointwise equicontinuous} (see~\cite{DS-88}). Additionally, $X$ bounded means that there exists a positive continuous function $\phi\colon(-\infty, 0]\to\mathbb{R}$ such that $|f(\xi)|\leq\phi(\xi)$ for all $\xi\in(-\infty, 0]$ and $f\in X$.

\begin{proof}[Proof of Proposition~\ref{prop-ex1}]
Let us fix $c>0$, $\eta_0\in\left(0,\frac{c}{c+\sigma_2}\right)$ with $\sigma_2$ defined as in~\eqref{eq-sigma}, and $\beta\in\mathfrak{B}$. Then, we define the operator $\mathcal{T}\colon\mathfrak{B}\to C(-\infty, 0]$ as follows. First, for a given $\beta\in\mathfrak{B}$, we find the unique solution $\eta$ of problem~\eqref{e:N} whose existence and properties are established in Proposition~\ref{prop-EXUN}. Then we consider the unique solution to \eqref{eq-iv1} with $\eta$ as above, satisfying $y(-\infty)=1$; its existence is proved in Proposition~\ref{prop-y}. Formally, we write
\begin{equation}\label{eq-op}
\mathcal{T}\colon\mathfrak{B}\to C(-\infty, 0], \quad \mathcal{T}(\beta):=y,
\end{equation}
where $y$ is the solution of~\eqref{eq-iv} obtained from $\beta$ as described above. Therefore, the operator $\mathcal{T}$ is well-defined.

To prove the existence of a fixed point for $\mathcal{T}$, we will use Schauder's fixed point theorem. To this purpose, we need to show that $\mathcal{T}(\mathfrak{B})\subseteq\mathfrak{B}$ and that $\mathcal{T}$ is a continuous and compact operator with respect to the topology of $C(-\infty, 0]$. We will prove these conditions in the following steps.

\smallskip
\noindent\textit{Step 1: $\mathfrak{B}$ is invariant for $\mathcal{T}$.} This follows directly from Proposition~\ref{prop-y}, particularly given that $\alpha > \kappa$, $y'(\xi) < 0$ for all $\xi \in (-\infty, 0]$, and $y(-\infty) = 1$.

\smallskip
\noindent\textit{Step 2: $\mathcal{T}$ is compact.} Given  $\beta \in \mathfrak{B}$, let $y=\mathcal{T}(\beta)$. According to Proposition~\ref{prop-y}, we have that $\kappa\leq y(\xi)<1$ for all $\xi \in (-\infty, 0]$, implying that $\mathcal{T}(\mathfrak{B})$ is bounded in $C(-\infty, 0]$. Now we show that the functions in $\mathcal{T}(\mathfrak{B})$ are equicontinuous in every $\xi \in (-\infty, 0]$. Let $\eta$ be the unique solution to problem \eqref{e:N}. As a consequence, we notice that $\eta$ satisfies~\eqref{eq-bound-n} and in turn~\eqref{eq-eta0}. Since $\eta''(\xi)+c\eta'(\xi)=\eta(\xi)\beta(\xi)>0$, we have that $\eta'(\xi)+c\eta(\xi)<\eta_0'+c\eta_0$ for all $\xi \in  (-\infty, 0]$.  By~\eqref{eq-eta0}, we obtain that $\eta_0'+c\eta_0<\eta_0(\sigma_2+c)$.
Therefore,
\[
0<-y'(\xi)<\frac{\eta'(\xi)+c\eta(\xi)-c(1-y(\xi))}{D\eta(\xi)y(\xi)}<\frac{\eta_0(\sigma_2+c)}{D\eta(\xi)\kappa}.
\]
According to \eqref{eq-bound-n}, we then obtain
\[
0<-y'(\xi)<\frac{\sigma_2+c}{D\kappa e^{\sigma_2 \xi}}, \quad \forall\xi \in (-\infty, 0],
\]
implying that $\mathcal{T}(\mathfrak{B}) \subset C(-\infty, 0]$ is equicontinuous and so $\mathcal{T}$ is compact.

\smallskip
\noindent\textit{Step 3: $\mathcal{T}$ is continuous.} Assume that $(\beta_n)_n\subseteq\mathfrak{B}$ converges to $\beta\in\mathfrak{B}$ in $C(-\infty, 0]$ as $n\to+\infty$. Let $y_n:=\mathcal{T}(\beta_n), \, y:=\mathcal{T}(\beta)$ and denote, respectively, with  $(\eta_n)_n\subseteq\mathfrak{N}$ and $\eta\in \mathfrak{N}$ the associated solutions to the problem in~\eqref{e:N}.

We claim that
\begin{equation}\label{e:etantoeta}
\eta_n \to \eta \, \text{ in }\, C^1(-\infty, 0]\, \text{ as }\, n\to+\infty.
\end{equation}
According to Proposition~\ref{prop-EXUN} and the estimates in \textit{Step~2}, we have
\[0<\eta_n'(\xi)+\eta_n(\xi)\leq \eta_n'(\xi)+c\eta_n(\xi)+\eta_n(\xi)< \eta_0(\sigma_2 +c)+\eta_0<1+c,\]
since $\eta_0<\frac{c}{c+\sigma_2}.$ We thus obtain that $(\eta_n)_n$ is bounded in $C^1(-\infty, 0]$. 
Moreover, for all $n$, $\eta_n$ satisfies~\eqref{eq:relation1} and $\eta_n'(-\infty)=0$; therefore, by passing to the limit as $\xi \to -\infty$, we have that
\begin{equation}\label{eq:relation3}
\eta_n'(0)=\int_{-\infty}^{0}\eta_n(s)\beta_n(s)\,\mathrm{d}s-c\eta_0.
\end{equation}
Moreover, it follows
\[\begin{split}
\vert \eta_{n}''(\xi)\vert &=\vert -c\eta_{n}'(\xi)+\eta_{n}(\xi)\beta_{n}(\xi)\vert \leq c\eta_{n}'(\xi)+\eta_{n}(\xi)\beta_{n}(\xi)\leq c\eta'_{n}(\xi)+\eta_{n}(\xi)\\
&<c\eta_{n}'(\xi)+\eta_{n}(\xi)+c^2\eta_{n}(\xi)
=c\left(\eta_{n}'(\xi)+c\eta_{n}(\xi)\right)+\eta_{n}(\xi)<c^2+1.
\end{split}\]
Hence every subsequence $(\eta_{n_p})_p\subseteq (\eta_n)_n$ is equicontinuous and then every  $(\eta_{n_p})_p$ is relatively compact in $C^1(-\infty, 0]$.
Therefore, we can extract a subsequence, for simplicity denoted as the sequence, which converges to $\hat{\eta}$ in $C^1(-\infty, 0]$.  If we restrict the estimate \eqref{eq:relation1} to the sequence $(\eta_{n_p}')_p$ and pass to the limit when $p \to +\infty$ we obtain that $\hat{\eta}$ satisfies the differential equation in~\eqref{e:N}, for every $\xi \in (-\infty, 0]$. Moreover, $\eta'_{n_p}(0)$ satisfies~\eqref{eq:relation3}, and so we can pass to the limit as  $p \to +\infty$, since since $0<\eta_{n}(\xi)\beta_n(\xi)<\eta_0e^{\nu_1 \xi}$ for $\xi \in (-\infty, 0]$ and $n$ arbitrary (see~\eqref{eq-bound-S12} with $m=\kappa$). We thus get
\[
\hat{\eta}'(0)=\int_{-\infty}^{0}\hat{\eta}(s)\beta(s)\,\mathrm{d}s-c\eta_0,
\]
implying that $\hat{\eta}'(\xi)+c\hat{\eta}(\xi)\to 0$, as $\xi \to -\infty$. As a consequence, $\hat{\eta}(-\infty)=0$, namely $\hat{\eta} \in \mathfrak{N}$. Hence, by the uniqueness of the solutions to the problem in~\eqref{e:N}, it follows $\hat{\eta}=\eta$. By the arbitrariness of the subsequence $(\eta_{n_p})_p$, we proved \eqref{e:etantoeta}.

Now, we consider the sequence $(y_n)_n$. Notice that $\kappa\leq y_n(\xi)\leq 1$ and $y_n'(\xi)<0$ for all $\xi \in (-\infty, 0]$ and $n \in \mathbb{N}$. Since $\mathcal{T}(\mathfrak{B})$ is relatively compact in $C(-\infty, 0]$ (see \textit{Step~2}), from every  $(y_{n_p})_p\subseteq (y_n)_n$ we can extract a subsequence, again denoted $(y_{n_p})_p$, which converges in $C(-\infty, 0]$ to some $\hat{y}$. We prove now that $y=\hat{y}$. In fact, by passing to the limit in
\[
y_{n_p}(0)-y_{n_p}(\xi)=\int_{\xi}^{0} \frac{c(1-y_{n_p}(s))-\eta_{n_p}'(s)-c\eta_{n_p}(s)}{D\eta_{n_p}(s)y_{n_p}(s)}\,\mathrm{d}s
\]
as $p \to +\infty$ we have that $\hat{y}$ satisfies the estimate
\[
\hat{y}(0)-\hat{y}(\xi)=\int_{\xi}^{0} \frac{c(1-\hat{y}(s))-\eta'(s)-c\eta(s)}{D\eta(s)\hat{y}(s)}\,\mathrm{d}s
\]
and so $\hat{y}$ is a solution to equation~\eqref{eq-iv1} on the half-line $(-\infty, 0]$ with $\hat{y}'(0)\leq 0$.
By Proposition~\ref{prop-4}, if $\hat{y}'(0)=0$, we arrive to the contradictory conclusion  that $\hat y$ is strictly increasing in $(-\infty, 0]$. Hence $\hat{y}'(0)<0$  and by Proposition~\ref{prop-y},  $\hat{y}(-\infty)=1$ and $\hat{y}=y$. The arbitrariness of the subsequence $(y_{n_p})_p$ then implies that $(y_{n})_n$ converges to $y$ in $C(-\infty, 0]$ and $\mathcal{T}$ is continuous.
\end{proof}

\subsection{Uniqueness result on the negative half-line for every speed}\label{subsection-4-2}

\begin{proposition}\label{prop-uniq}
For every $c>0$ there is at most one (up to shift) semi-wavefront $(\eta,\beta)$ of~\eqref{eq-sys} in $(-\infty, 0]$ with wave speed $c$.
\end{proposition}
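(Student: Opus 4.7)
The plan is to show that any two semi-wavefronts of \eqref{eq-sys} with the same wave speed $c$ correspond to trajectories of a three-dimensional autonomous system lying on a common one-parameter family of orbits, which is precisely the shift orbit. The key tool is the central manifold theorem applied at the degenerate equilibrium corresponding to the asymptotic state $(\eta,\eta',\beta) = (0,0,1)$ as $\xi \to -\infty$.

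First, I would rewrite \eqref{eq-n}--\eqref{eq-bis} in the phase space $(\eta,\eta',\beta)$. The equation for $\beta'$ is singular at $\eta = 0$; the reparametrization $ds = d\xi/(D\eta\beta)$ (well-defined along any semi-wavefront by Remark~\ref{r:semi-w}) converts the dynamics into the polynomial system
\begin{equation*}
\frac{d\eta}{ds} = D\eta\beta\eta', \quad \frac{d\eta'}{ds} = D\eta\beta(\eta\beta - c\eta'), \quad \frac{d\beta}{ds} = c(1-\beta) - \eta' - c\eta,
\end{equation*}
which has a regular equilibrium $E = (0,0,1)$. Its Jacobian has eigenvalues $0, 0$ (a two-dimensional center subspace) and $-c$ (a one-dimensional stable subspace); there is no unstable direction, so any trajectory converging to $E$ as $s \to -\infty$ (equivalently $\xi \to -\infty$) must lie on a local center manifold $W^c$, which the central manifold theorem provides as a two-dimensional smooth invariant surface.

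Next, I would reduce to $W^c$. Lemma~\ref{l:limits} pins down its leading expansion as $\beta = 1 - \eta - \eta'/c + o(|\eta|+|\eta'|)$. Passing to the scaled variable $\zeta = \eta'/\eta$ with $\eta$ as independent variable, the restricted dynamics take the form of a scalar Fuchsian ODE
\begin{equation*}
\eta\zeta\frac{d\zeta}{d\eta} = 1 - c\zeta - \zeta^2 + O(\eta),
\end{equation*}
whose only positive zero at $\eta = 0$ is $\zeta_\ast = \sigma := 2/(c+\sqrt{c^2+4})$. The characteristic exponent at $\zeta_\ast$ equals $-(c+2\sigma)/\sigma < 0$, so the nontrivial homogeneous branch blows up as $\eta \to 0^+$, and the unique bounded solution is an invariant curve $\eta' = \Phi(\eta) = \sigma\eta + O(\eta^2)$ inside $W^c$. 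Every semi-wavefront must correspond to a trajectory on this curve, with leading asymptotic $\eta(\xi) \sim Ae^{\sigma\xi}$ for some constant $A > 0$.

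Finally, trajectories along this invariant curve form a one-parameter family on which horizontal shifts $\xi \mapsto \xi + \xi_0$ act transitively (the action multiplies $A$ by $e^{\sigma\xi_0}$). Consequently any two semi-wavefronts of \eqref{eq-sys} in $(-\infty, 0]$ with wave speed $c$ coincide up to such a shift. The main technical obstacle lies in the second step: the nilpotent block of the Jacobian at $E$ makes center manifold reduction nonunique, so one must argue that backward-asymptotic trajectories are independent of the choice of $W^c$ and propagate the Fuchsian uniqueness of the bounded branch to all orders, carefully matching the nonlinear corrections encoded in the center manifold graph.
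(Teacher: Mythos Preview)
Your plan matches the paper's approach almost exactly: the same desingularizing time change $ds = d\xi/(D\eta\beta)$, the same equilibrium with spectrum $\{0,0,-c\}$, and the same reduction to a two-dimensional center manifold followed by a one-dimensional uniqueness argument. The paper merely works in the linearly equivalent coordinates $(p,q,r)=(-\eta'/c-\eta,\ \eta,\ \beta+\eta+\eta'/c-1)$, which diagonalize the Jacobian at the equilibrium.

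The one place where you leave a genuine gap---the nonuniqueness of $W^c$---is exactly what the paper resolves, and not in the way you suggest. Rather than arguing that backward-asymptotic trajectories are independent of the choice of $W^c$, the paper proves that the center manifold is in fact \emph{unique} in this problem: writing two candidate graphs $r=k_i(p,q)$, $i=1,2$, the invariance condition yields a linear integral equation for $\delta=k_1-k_2$ along trajectories, and a contraction estimate forces $\delta\equiv 0$ near the origin (the argument follows Guo--Tsai). Your Fuchsian reduction via $\zeta=\eta'/\eta$ would then be a legitimate variant of the paper's final step (the paper instead writes $p$ as a function of $q$ and runs a direct monotonicity argument), but note that the $O(\eta)$ term in your scalar ODE depends on the graph $k$, so the Fuchsian uniqueness of the bounded branch does not by itself bypass the center-manifold uniqueness issue---it presupposes it.
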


\begin{remark}[Reduction to a desingularized first-order problem]
For every $c>0$ and $\eta_0 \in \left(0, \frac{c}{c+\sigma_2}\right)$, let $(\eta,\beta)$ be a semi-wavefront of~\eqref{eq-sys} in $(-\infty, 0]$ with wave speed $c$.
We consider the function $u=\Phi(\xi)$ such that
\[\begin{cases}
\Phi'(\xi)=\dfrac{1}{D\eta(\xi)\beta(\xi)}, \quad \xi\in(-\infty,0],\\
\Phi(0)=0.
\end{cases}\]
Notice that the function $\Phi$ is a diffeomorphism with $\Phi(-\infty)=-\infty$ because $\Phi'(\xi)>0$ for all $\xi\in(-\infty, 0]$.
Hence, we can define
\[\begin{split}
&p(u):=-\frac{\eta'\left(\Phi^{-1}(u)\right)}{c}-\eta\left(\Phi^{-1}(u)\right), \\
&q(u):=\eta\left(\Phi^{-1}(u)\right), \\
&r(u):=\beta\left(\Phi^{-1}(u)\right)+\eta\left(\Phi^{-1}(u)\right)+\frac{\eta'\left(\Phi^{-1}(u)\right)}{c}-1.
\end{split}\]
Then, for every $u\in(-\infty,0]$, $(p,q,r)$ solves the following first-order problem
\begin{equation}\label{eq-manifold}
\begin{cases}
\dot p(u)=-\frac{D}{c}q^2(u)\left(p(u)+r(u)+1\right)^2,\\
\dot q(u)=cD\left(-p(u)-q(u)\right)\left(p(u)+r(u)+1\right) q(u),\\
\dot r(u)=-c r(u)+\frac{D}{c}q^2(u)\left(p(u)+r(u)+1\right)^2.
\end{cases}
\end{equation}
Since $\Phi^{-1}(-\infty)=-\infty$, it follows that $(p,q,r)$ satisfies the conditions
\begin{equation}\label{equilibrium}
p(-\infty)=0, \quad q(-\infty)=0 \quad r(-\infty)=0.
\end{equation}
Moreover, by Lemma~\ref{lem-1} and Remark~\ref{r:semi-w}, we deduce that $p<0$, $0<q<1$, $-p-q>0$, and $p+r+1>0$ in $(-\infty,0]$; hence, $\dot q>0$ in $(-\infty,0]$. 

On the other hand, since $\Phi$ is a diffeomorphism, we can consider the inverse function $\xi=\Phi^{-1}(u)$ in $(-\infty,0]$. Accordingly, if the triple of function $(p,q,r)$ satisfies~\eqref{eq-manifold}--\eqref{equilibrium}, then $\eta(\xi)=q\left(\phi(\xi)\right)$ and $\beta(\xi)=p\left(\phi(\xi)\right)+r\left(\phi(\xi)\right)+1$ satisfy~\eqref{eq-sys-1}--\eqref{eq-bc1}.
\end{remark}

\begin{proof}[Proof of Proposition~\ref{prop-uniq}]
Let $c>0$ be fixed. It is easy to show that $(0,0,0)$ is an equilibrium for~\eqref{eq-manifold}. By evaluating the Jacobian matrix $J$ at this equilibrium, we obtain
\[
J(0,0,0)=\left[\begin{array}{ccc}
0  &  0 & 0\\
0  &  0 & 0\\
0  &  0 & -c \\
\end{array}\right].
\]
Consequently, the eigenvalues of $J(0,0,0)$ are $\lambda_1=\lambda_2=0$ and $\lambda_3=-c$, with corresponding eigenvectors $e_1=(1,0,0)$, $e_2=(0,1,0)$ and $e_3=(0,0,1)$. Thanks to the Center Manifold Theorem (see~\cite[Theorem~3.2.1]{GuHo-83}), it follows the existence of a possibly non-unique center manifold whose tangent space is generated by $\{e_1,e_2\}$, and a unique stable invariant manifold tangent to the eigenspace generated by $e_3$. The center manifold has two dimensions, while the stable manifold has only one dimension.

Notice that a solution of~\eqref{eq-manifold} satisfying~\eqref{equilibrium} must leave the equilibrium $(0,0,0)$ via the center manifold. Therefore, by following the approach in~\cite{GuoTsai-01}, we prove that the center manifold is unique. Let us consider two center manifolds $W_1$ and $W_2$ such that
\[
W_1=\left(p,q,k_1(p,q)\right), \quad W_2=\left(p,q,k_2(p,q)\right)
\]
for some $C^1$ functions $k_1,k_2$ such that $k_i(0,0)=\frac{\partial}{\partial p}k_i(0,0)=\frac{\partial}{\partial q}k_i(0,0)=0$, with $i\in\{1,2\}.$ Since $W_1$ and $W_2$ are invariant for the flow associated with~\eqref{eq-manifold}, we have
\[\begin{split}
&\frac{\partial k_i}{\partial p}(p(u),q(u))\Big(-\frac{D}{c}q(u)^2\left(p(u)+k_i(p(u),q(u))+1\right)^2	\Big)\\
&\quad+\frac{\partial k_i}{\partial q}(p(u),q(u))\Big( cD\left(-p(u)-q(u)\right)\left(p(u)+k_i(p(u),q(u))+1\right) q(u) \Big)\\
&=-c k_i(p(u),q(u))+\frac{D}{c}q(u)^2\left(p(u)+k_i(p(u),q(u))+1\right)^2,
\quad i\in\{1,2\}.
\end{split}\]
Let us consider the bounded function
\[
\delta(u)=k_1(p(u),q(u))-k_2(p(u),q(u)).
\]
Then, it follows
\[
\dot{\delta}(u)+c\delta(u)=\frac{D}{c}q(u)^2\left[	\left(p(u)+k_1(p(u),q(u))+1\right)^2-\left(p(u)+k_2(p(u),q(u))+1\right)^2	\right].
\]
By integrating in $(-\infty,u]$, we deduce
\[\begin{split}
\delta(u)&=\int_{-\infty}^u \frac{D}{c} e^{c(s-u)} q(s)^2\left[	\left(p(s)+k_1(p(s),q(s))+1\right)^2-\left(p(s)+k_2(p(s),q(s))+1\right)^2	\right]\,\mathrm{d}s\\
&=\int_{-\infty}^u \frac{D}{c} e^{c(s-u)} \delta(s) q(s)^2\left[2p(s)+2+k_1(p(s),q(s))+k_2(p(s),q(s))\right]\,\mathrm{d}s.
\end{split}\]
Since, $q(u)^2\left(2p(u)+2+k_1(p(u),q(u))+k_2(p(u),q(u)\right)\to0$ as $u\to -\infty$, then there is $\bar{u}<0$ such that for all $u<\bar{u}$, we have $q(u)^2\left(2p(u)+2+k_1(p(u),q(u))+k_2(p(u),q(u)\right)<\frac{1}{2D}$. Therefore, for all $u<\bar{u}$, we deduce
\[
|\delta(u)|\leq \frac{1}{2}\int_{-\infty}^u |\delta(s)| \frac{e^{c(s-u)}}{c}\,\mathrm{d}s\leq \frac{1}{2}\sup_{u\in(-\infty,\bar{u}]}|\delta(u)|.
\]
Then, we have $|\delta(u)|=0$ for all $u<\bar{u}$, and so $r(u)=k_1(p(u),q(u))=k_2(p(u),q(u))$ which lead to $W_1=W_2$. In the following, we denote the unique central manifold by $k(p(u), q(u))$.

It remains to show that the system
\[
\begin{cases}
\dot{p}(u) = -\frac{D}{c} q(u)^2 \left( p(u) + k(p(u), q(u)) + 1 \right)^2, \\
\dot{q}(u) = cD \left( -p(u) - q(u) \right) \left( p(u) + k(p(u), q(u)) + 1 \right) q(u),
\end{cases}
\]
has at most one solution satisfying $p(-\infty) = 0$ and $q(-\infty) = 0$. To proceed, we recall that $\dot{q} > 0$. Thus, due to the strict monotonicity of $q$, we can define the inverse function $u = u(q)$ of $q$ on $(-\infty, \bar{u})$ and express $p$ as a function of $q$, i.e., $p = p(q)$. We then need to prove that
\begin{equation}\label{syst-pq-xx}
\dot{p}(q)= \frac{q\left(p(q) + k(p(q), q) + 1\right)}{c^2\left(p(q) + q\right)}
\end{equation}
has at most one solution satisfying $\lim_{q\to0^+}p(q) = 0$.

Assume, by contradiction, that $p_1$ and $p_2$ are two solutions of~\eqref{syst-pq-xx} defined on the same interval $(0, q_0)\subset (0,1)$ such that
\[
\lim_{q\to0^+}p_1(q) = 0=\lim_{q\to0^+}p_2(q).
\]
Without loss of generality, suppose $p_2(q_0) < p_1(q_0)$. By the uniqueness of solutions of the Cauchy problem associated with~\eqref{syst-pq-xx}, it follows that $p_1(q) - p_2(q) > 0$, for all $q\in(0,q_0)$. On the other hand, we compute
\[\begin{split}
\dot p_1(q)-&\dot p_2(q)=
\frac{q}{c^2}\left[ \frac{\left(p_1(q) + k\left(p_1(q), q\right) + 1\right)}{p_1(q) + q}-\frac{\left(p_2(q)+ k\left(p_2(q), q\right) + 1\right)}{p_2(q) + q}	\right]\\
&=\frac{q}{c^2}\left[\frac{	\left(p_2(q)-p_1(q)\right)(1-q) + k\left(p_1(q),q\right)\left(p_2(q)+q\right) - k\left(p_2(q),q\right)\left(p_1(q)+q\right)}{\left(p_1(q) + q\right)\left(p_2(q) + q\right)}\right].
\end{split}\]
By adding the term $\pm(p_2(q) + q)k_2(p_2(q), q)$ to the numerator on the left-hand side and dividing both sides by $\left(p_1(q) - p_2(q)\right)$, we obtain
\[
\frac{\dot p_1(q)-\dot p_2(q)}{p_1(q)-p_2(q)}=A(q)\left[q-1-k\left(p_2(q), q\right))+ \left(p_2(q)+q\right)\frac{k\left(p_1(q), q\right)-k\left(p_2(q), q\right)}{p_1(q)-p_2(q)}
\right],
\]
where
\[A(q)=\frac{q}{c^2\left(p_1(q) + q\right)\left(p_2(q) + q\right)\left(p_1(q)-p_2(q) \right)}>0, \quad \forall q\in(0,1).
\]

We claim that
\[
\lim_{q\to0^+}q-1-k\left(p_2(q), q\right))+ \left(p_2(q)+q\right)\frac{k\left(p_1(q), q\right)-k\left(p_2(q), q\right)}{p_1(q)-p_2(q)}=-1.
\]
Once the claim is proved, since $p_1(q) - p_2(q) > 0$ for every $q \in (0, q_0)$, it follows that there is $q_1 \in (0, q_0)$ such that $\dot{p}_1(q) - \dot{p}_2(q) < 0$ for all $q \in (0, q_1)$. This implies that $p_1 - p_2$ is decreasing in $(0, q_1)$ and so $p_1(q)-p_2(q)>p_1(q_1)-p_2(q_1)$ for $q<q_1$, leading to a contradiction since $\lim_{q\to0^+}p_1(q) - p_2(q) = 0$. 

To prove the claim, notice that
\[
\lim_{q \to 0^+} (q - 1 - k(p_2(q), q)) = -1,
\]
since $k(0, 0) = 0$. Given that $p_2(q) + q < 0$, for all $q\in(0,1)$, it remains to prove that
\begin{equation}\label{eq-lim-var}
\lim_{q \to 0^+} \frac{k(p_1(q), q) - k(p_2(q), q)}{p_1(q) - p_2(q)} = 0.
\end{equation}
Let $\varepsilon > 0$ be given. Since $\frac{\partial}{\partial p} k(0, 0) = 0$, there exists $\delta = \delta(\varepsilon) > 0$ such that for all $|p| < \delta$ and $|q| < \delta$, we have
\begin{equation}\label{ineq-k}
\left|\frac{\partial}{\partial p} k(p, q)\right| < \varepsilon.
\end{equation}
Given that $\lim_{q \to 0^+} p_1(q) = 0 = \lim_{q \to 0^+} p_2(q)$, there exists $\delta_1 = \delta_1(\varepsilon) > 0$ with $\delta_1<\delta$ such that $|q| < \delta_1$ implies $|p_i(q)|\leq \delta$, for $i\in\{1,2\}$. Therefore, from \eqref{ineq-k}, it follows that
\[
\left| \frac{k(p_1(q), q) - k(p_2(q), q)}{p_1(q) - p_2(q)} \right| = \left| \frac{\partial}{\partial p} k(p(q), q) \right| < \varepsilon,
\]
for some $p(q)$ between $p_1(q)$ and $p_2(q)$ by the Mean Value Theorem. This proves \eqref{eq-lim-var} and completes the proof.
\end{proof}

\begin{proof}[Proof of Theorem~\ref{th-1}] The thesis follows by Propositions~\ref{prop-ex1} and~\ref{prop-uniq}.
\end{proof}

\section{Existence of wavefront solutions}\label{section-5}
In this section, we aim to demonstrate Theorem~\ref{th-2}.
To proceed with the proof of the theorem, we first consider the semi-wavefront $(\eta, \beta)$ of~\eqref{eq-sys} in $(-\infty, 0]$ with wave speed $c>0$, whose unique existence is established by Theorem~\ref{th-1}. Next, we extend this semi-wavefront to the interval $[0, \tau)$, where $\tau$ is defined as in~\eqref{eq-tau}. Notably, $\eta'>0$ and $\beta'<0$ hold on $(-\infty, \tau)$ (see Lemma~\ref{lem-1} and Remark~\ref{r:semi-w}), allowing us to define $z_c(\beta)$ for $0<\beta<1$, as described in~\eqref{e:z}, and consider problem~\eqref{e:zsist}.

Secondly, we introduce the following auxiliary first-order boundary value problem:
\begin{subnumcases}{\label{eq-w}}
\dot w(\beta)=-\sigma-\frac{g(\beta)}{w(\beta)}, \quad 0<\beta<2, \label{eq-w1}\\
\dot w(\beta)<0, 	\quad 0<\beta<2, \label{eq-w2}\\
w(0)=w(2)=0, \label{eq-w3}
\end{subnumcases}
where $\dot w=\frac{\mathrm{d}w}{\mathrm{d}\beta}$ and $g\colon [0, 2] \to \mathbb{R}$ is a continuous function satisfying: $g(s)=De^Ds^2$ for all $s\in [0, 1]$, $0<{g(s)}/{s}<De^D$ for all $s \in (1,2)$, and $g(2)=0$. We denote with  $w_{\sigma}$ the solution of~\eqref{eq-w} associated with the parameter $\sigma$ and recall the following result.

\begin{proposition}\label{p:sistw}(see~\cite[Theorem 2.6]{CM} and~\cite[Lemma 2.1]{MaMa-05})
There exists a positive constant $\sigma^*$ satisfying $0<\sigma^*\leq 2\sqrt{De^D}$ such that problem~\eqref{eq-w} has a unique solution if and only if $\sigma\geq \sigma^*$. Moreover $\dot w_{\sigma^*}(0)=-\sigma^*$ and $\dot w_{c}(0)=0$ for $c>\sigma^*$.
\end{proposition}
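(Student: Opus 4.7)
The proof strategy, classical in the theory of singular first-order BVPs arising from degenerate-diffusion traveling waves, is a backward shooting/comparison argument. I would reformulate \eqref{eq-w} as follows. For each $\sigma>0$, solve \eqref{eq-w1} backward from the terminal condition $w(2^-)=0$; since $g(2)=0$, the right endpoint is a removable singularity and a unique local $C^1$ solution $w_\sigma$ exists. Extend it on the maximal interval $(\alpha_\sigma,2)$ on which $w_\sigma<0$. Then \eqref{eq-w} admits a solution for this $\sigma$ if and only if $\alpha_\sigma=0$ and $w_\sigma(0^+)=0$; uniqueness of the solution of the BVP will follow from unique backward solvability of the Cauchy problem.

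The next step is to establish strict monotonicity in $\sigma$. A direct comparison argument applied to \eqref{eq-w1} shows that $\sigma_1<\sigma_2$ implies $w_{\sigma_1}>w_{\sigma_2}$ wherever both are defined (strictly, for $\beta<2$). Hence the admissible set $\Sigma:=\{\sigma>0 : w_\sigma \text{ solves }\eqref{eq-w}\}$ is upward closed; continuous dependence on $\sigma$ then gives $\Sigma=[\sigma^*,+\infty)$ with $\sigma^*:=\inf\Sigma$. For the upper bound $\sigma^*\leq 2\sqrt{De^D}$, I would exhibit an explicit subsolution of \eqref{eq-w1} at $\sigma=2\sqrt{De^D}$ vanishing at both endpoints: the envelope $g(\beta)\leq De^D\beta^2$ on $[0,2]$ (implied by the hypotheses on $g$) makes an ansatz of the form $\overline w(\beta)=-\sqrt{De^D}\,\beta(2-\beta)/2$ a natural candidate, and direct substitution verifies the subsolution inequality. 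For the positivity $\sigma^*>0$, multiply \eqref{eq-w1} by $w$ and integrate on $(0,2)$: the boundary terms vanish and one obtains
\[
\sigma\int_0^2 w(\beta)\,d\beta + \int_0^2 g(\beta)\,d\beta = 0,
\]
which forces $\sigma>0$ since $w<0$ and $g>0$ on $(0,2)$.

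For the dichotomy concerning $\dot w_\sigma(0)$, I would exploit the local structure of \eqref{eq-w1} near $\beta=0$, where $g(\beta)=De^D\beta^2$. An indicial analysis of $\dot w=-\sigma - De^D\beta^2/w$ yields exactly two admissible local branches for a solution approaching $0$: a linear branch $w(\beta)\sim -\sigma\beta$ (giving $\dot w(0)=-\sigma$) and a quadratic branch $w(\beta)\sim -De^D\beta^2/\sigma$ (giving $\dot w(0)=0$). Combining the strict monotonicity of $\sigma\mapsto w_\sigma$ with this dichotomy, the critical trajectory at $\sigma^*$ is forced onto the linear branch, giving $\dot w_{\sigma^*}(0)=-\sigma^*$, while for $\sigma>\sigma^*$ the extra room near $0$ places the solution on the quadratic branch.

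The main obstacle is precisely this last step: proving that $\sigma^*$ is actually attained and that the critical solution realizes $\dot w(0)=-\sigma^*$. The difficulty is that as $\sigma\searrow\sigma^*$ the two branches collide near $\beta=0$, and one must rule out pathological limiting behavior. The classical remedy is to construct narrow invariant funnels around each branch near the singular endpoint and combine them with the strict $\sigma$-monotonicity via a Gronwall-type estimate, which is the technical core of the arguments in \cite{CM,MaMa-05} that are invoked here.
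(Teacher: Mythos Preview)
The paper does not supply its own proof of this proposition: it is stated with the attribution ``see \cite[Theorem~2.6]{CM} and \cite[Lemma~2.1]{MaMa-05}'' and no argument is given. Your sketch is precisely the classical comparison/shooting strategy that underlies those cited results---backward shooting, strict $\sigma$-monotonicity yielding an upward-closed admissible set, the integral identity for $\sigma^*>0$, and the two-branch indicial analysis at $\beta=0$ for the slope dichotomy---so you are in effect reconstructing the referenced proofs rather than offering an alternative.

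One minor correction: your explicit candidate $\overline w(\beta)=-\tfrac12\sqrt{De^D}\,\beta(2-\beta)$ is not a subsolution of \eqref{eq-w1} at $\sigma=2\sqrt{De^D}$; a direct check at, say, $\beta=\tfrac12$ shows the inequality goes the wrong way. The standard route to the bound $\sigma^*\le 2\sqrt{De^D}$ uses instead the estimate $g(s)/s\le De^D$ on $(0,2]$ (which the hypotheses on $g$ do imply) together with the linear comparison function $-\sqrt{De^D}\,\beta$ near the origin; this is how the KPP-type bound $2\sqrt{\sup_{s}g(s)/s}$ is obtained in \cite{MaMa-05}. This is a cosmetic fix and does not affect the overall correctness of your outline.
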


Lastly, in the proof of Theorem~\ref{th-2}, we make use of the following comparison result between solutions of problems~\eqref{e:zsist} and~\eqref{eq-w}. For what follows, we recall the definitions of the values $\eta_0, \eta_0'$, $\beta_0$, and $\beta_0'$ in~\eqref{eq-0}.

\begin{proposition}\label{p:zbeta0}
Let $w_{\sigma^*}$ be the solution of problem~\eqref{eq-w} associated with the parameter $\sigma^*$. Then, for every $c\ge\sigma^*$ the solution $z_c$ of~\eqref{e:zsist}  satisfies
\begin{equation}\label{e-zbeta0}
w_{\sigma^*}(\beta)<z_c(\beta) <0,\quad \forall \beta\in(0,\beta_0].
\end{equation}
\end{proposition}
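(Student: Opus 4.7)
The plan is to handle the two inequalities $w_{\sigma^*}(\beta)<z_c(\beta)<0$ separately. The right one is immediate from~\eqref{e:z2}, so I will concentrate on the left inequality and, in fact, establish it on the whole interval $(0,1)$ (which contains $(0,\beta_0]$) by a comparison-at-a-crossing argument applied to $v:=z_c-w_{\sigma^*}$.

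Two preliminary facts come first. On the one hand, the boundary conditions \eqref{e:z3} and \eqref{eq-w2}--\eqref{eq-w3} give $v(1)=-w_{\sigma^*}(1)>0$. On the other hand, I will combine Proposition~\ref{p:equilim} (which yields $\eta(\tau^-)\leq\sqrt{e^D}$) with the strict monotonicity of $\eta$ on $(-\infty,\tau)$ from Lemma~\ref{lem-1} and Remark~\ref{r:semi-w} to deduce the strict coefficient comparison
\[
DN^2(\beta)\beta^2 < De^D\beta^2 = g(\beta), \quad \forall \beta\in(0,1),
\]
since at every interior $\beta$ one has $N(\beta)=\eta(\xi(\beta))<\eta(\tau^-)\leq\sqrt{e^D}$.

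Next, I will argue by contradiction: suppose there is $\bar\beta\in(0,\beta_0]$ with $v(\bar\beta)\leq 0$. Continuity of $v$ together with $v(1)>0$ yields $\beta_1:=\sup\{\beta\in[\bar\beta,1]:v(\beta)\leq 0\}\in[\bar\beta,1)$, at which $v(\beta_1)=0$ and $v>0$ on $(\beta_1,1]$, forcing $\dot v(\beta_1)\geq 0$. However, at the crossing point $z_c(\beta_1)=w_{\sigma^*}(\beta_1)=:w<0$, subtracting~\eqref{eq-w1} from~\eqref{e:z1} gives
\[
\dot v(\beta_1) \;=\; (\sigma^* - c) \,+\, \frac{g(\beta_1)-DN^2(\beta_1)\beta_1^2}{w}.
\]
The first summand is $\leq 0$ by the hypothesis $c\geq\sigma^*$, while the second is strictly negative by the preliminary strict comparison and $w<0$. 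Hence $\dot v(\beta_1)<0$, contradicting $\dot v(\beta_1)\geq 0$, which proves the claim.

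The main obstacle will be securing the \emph{strict} inequality $DN^2(\beta)\beta^2<g(\beta)$ at each interior $\beta$: Proposition~\ref{p:equilim} provides only the non-strict bound $\eta(\tau^-)\leq\sqrt{e^D}$, so the strict monotonicity of $\eta$ on $(-\infty,\tau)$ is essential to upgrade this bound pointwise to a strict inequality on all of $(0,1)$. Without such strictness, the crossing analysis would merely give $\dot v(\beta_1)=0$ (in the critical case $c=\sigma^*$), and the contradiction would collapse.
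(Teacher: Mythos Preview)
Your argument is correct and follows a somewhat different route from the paper's. The paper does not anchor the comparison at $\beta=1$; instead it first fixes a specific shift of the semi-wavefront by setting $\eta_0=\eta_0(c):=\mu/(c+\sigma_2)$ with $\mu:=-\max_{[1/2,1]}w_{\sigma^*}$, shows this forces $\beta_0\in(1/2,1)$, and then proves directly $z_c(\beta_0)>-\mu\geq w_{\sigma^*}(\beta_0)$ via the identity $z_c(\beta_0)=c(1-\beta_0)-\eta_0'-c\eta_0$ from~\eqref{eq-bis} together with $\eta_0'\leq\eta_0\sigma_2$. From this anchor at $\beta_0$ it runs the same crossing argument on $(0,\beta_0)$. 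Your anchor at $\beta=1$---using $z_c(1)=0$ from~\eqref{e:z3} while $w_{\sigma^*}(1)<0$ since $w_{\sigma^*}$ is negative on the interior of $[0,2]$---is more economical: it dispenses with the auxiliary constant $\mu$ and the specific choice of $\eta_0$, and it delivers the inequality on all of $(0,1)$ rather than only on $(0,\beta_0]$. The paper's approach, in exchange, works entirely inside $(0,\beta_0]$ and does not need to invoke the boundary behaviour of $z_c$ at $\beta=1$. The crossing-plus-strict-coefficient-comparison core (Proposition~\ref{p:equilim} combined with strict monotonicity of $\eta$ to get $DN^2(\beta)\beta^2<g(\beta)$) is identical in both.
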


\begin{proof}
Using the threshold speed $\sigma^*$ from Proposition~\ref{p:sistw} for problem~\eqref{eq-w}, let us define
\[
\mu:=- \max_{\tilde\beta \in \left[\frac 12, 1\right]} w_{\sigma^*}(\tilde\beta).
\]
We observe that $\mu<\frac{\sigma^*}{2}$. Indeed, from~\eqref{eq-w1}, we obtain that
\[
\dot{w}_{\sigma^*}(\beta)=-\sigma^*+\frac{g(\beta)}{-w_{\sigma^*}(\beta)}>-\sigma^*, \quad 0<\beta<2.
\]
Therefore, by integrating in $[0,\beta]$, we obtain
\[
w_{\sigma^*}(\beta)=w_{\sigma^*}(\beta)-w_{\sigma^*}(0)=\int_{0}^{\beta}\dot{w}_{\sigma^*}(s)\,\mathrm{d}s>-\sigma^*\beta,
\]
which implies $\max_{\beta \in \left[\frac 12, 1\right]} w_{\sigma^*}(\beta)>-\frac{\sigma^*}{2}.$

We take $c \geq \sigma^*$ and set
\begin{equation}\label{eq-etaeta}
\eta_0 = \eta_0(c) := \frac{\mu}{c + \sigma_2},
\end{equation}
with $\sigma_2$ defined as in~\eqref{eq-sigma}.
First of all, we notice that $\eta_0 \in \left(0, \frac{c}{c+\sigma_2}\right)$ since $\mu <\frac{\sigma^*}{2}<c$. Then, from Proposition~\ref{prop-2}, we obtain $\beta_0> 1-\frac{\eta_0(c+\sigma_2)}{c}=1-\frac{\mu}{c}>1-\frac{\sigma^*}{2c}.$ Thus, since $c\ge\sigma^*$,
\begin{equation}\label{eq-bb0}
\frac{1}{2}<\beta_0<1.
\end{equation}

Next, we divide the proof into two parts. First, we claim that $z_c(\beta_0) >w_{\sigma^*}(\beta_0)$. To this purpose, we consider $z_c(\beta_0)=DN(\beta_0)\beta_0\beta'\left(\xi(\beta_0)\right)=D\eta_0\beta_0\beta'_0,$ 
and, by using formula~\eqref{eq-bis}, we obtain
\[
z_c(\beta_0)=D\eta_0\beta_0\beta'_0=c(1-\beta_0)-\eta_0'-c\eta_0>-\eta_0'-c\eta_0.
\]
Since $\eta_0'\leq \eta_0\sigma_2$ via~\eqref{eq-eta0}, we get
\[
z_c(\beta_0)>-\eta_0'-c\eta_0\geq -\eta_0(c+\sigma_2)=-\frac{\mu}{c+\sigma_2}(c+\sigma_2)=-\mu.
\]
Hence,
\[
z_c(\beta_0)>-\mu =\max_{\beta \in \left[\frac{1}{2}, 1\right]} w_{\sigma^*}(\beta)\geq w_{\sigma^*}(\beta_0),
\]
where the last inequality follows from~\eqref{eq-bb0}. Thus, the claim is proved.

As for the second part, we suppose by contradiction that there exists a $\beta_1 \in (0, \beta_0)$ such that $z_c(\beta_1)=w_{\sigma^*}(\beta_1)$. Without loss of generality, we can assume that $z_c(\beta)>w_{\sigma^*}(\beta)$ for $\beta \in (\beta_1, \beta_0]$. Since $\beta_1>0$, by Proposition \ref{p:equilim} we have
\[N(\beta_1)=\eta(\xi(\beta_1))<\eta(\tau)\leq  \sqrt{e^D}.
\]
Therefore, $DN^2(\beta_1)\beta_1^2<De^D\beta_1^2=g(\beta_1)$. Since $c\ge \sigma^*$,
\[\dot z_c(\beta_1)-\dot{w}_{\sigma^*}(\beta_1)=-c+\frac{DN^2(\beta_1)\beta_1^2}{-z_c(\beta_1)} +\sigma^*-\frac{g(\beta_1)}{-w_{\sigma^*}(\beta_1)}
<-c+\sigma^*\leq 0.
\]
Therefore, there exists $\varepsilon >0$ such that $z_c(\beta)-w_{\sigma^*}(\beta)<0$, for all $\beta \in (\beta_1, \beta_1+\varepsilon)$. This contradicts the condition $z_c>w_{\sigma^*}$ in $(\beta_1, \beta_0]$. At last the existence of $\beta_2 \in (0, \beta_0)$ such that $z_c(\beta_2)=0$ implies $\dot z_c(\beta_2)=+\infty$, in contradiction with $z_c<0$ in $(\beta_2, \beta_0)$. Hence, \eqref{e-zbeta0} is satisfied, and the proof is complete.
\end{proof}

\begin{proof}[Proof of Theorem~\ref{th-2}]
Let us define $c_0:=\sigma^*,$ where $\sigma^*$ is the threshold speed associated to~\eqref{eq-w} (see Proposition~\ref{p:sistw}). Consider the semi-wavefront $(\eta, \beta)$ of~\eqref{eq-sys} in $(-\infty, 0]$ with wave speed $c\geq c_0$ (see Theorem~\ref{th-1}) and extend it to~$[0,\tau)$.

We consider two cases for $\tau$. If $\tau=+\infty$, then $(\eta, \beta)$ is a classical wavefront of~\eqref{eq-sys} with wave speed $c$ by Proposition~\eqref{p:tau}~$(i)$. On the contrary, if $\tau<+\infty$, we consider the function $z_c$ defined in~\eqref{e:z} which satisfies problem~\eqref{e:zsist}. By Proposition~\eqref{p:zbeta0} and Lemma~\ref{lem-1}, we have that
 \[
 0\ge \lim_{\xi \to \tau^-}D\eta(\xi)\beta(\xi)\beta'(\xi)=\lim_{\beta\to 0^+}z_c(\beta)\ge \lim_{\beta\to 0^+}w_{\sigma^*}(\beta)=0;
 \]
hence, $(\eta, \beta)$ is a wavefront of~\eqref{eq-sys} with wave speed $c$ by Proposition~\ref{p:tau}~$(ii)$.

We now show that $\tau<+\infty$ is impossible when $c>c_0$. Indeed, let $c>c_0$. Notice that $\dot z_c(0)$ exists (see \cite[Lemma 2.1]{MaMa-05}) and it is $\dot z_c(0)=0$ or $\dot z_c(0)=-c$. Since $\dot w_{\sigma^*}=-\sigma^*$ (see Proposition \ref{p:sistw}) the latter case is in contradiction with  inequality \eqref{e-zbeta0}. Hence $\dot z_c(0)=0$ and then, by \eqref{e:z}, we obtain
\[
\lim_{\xi \to \tau^-}\beta'(\xi)=\lim_{\beta\to0^+} \frac{z_c(\beta)}{D\beta\eta(\xi(\beta))}=0,
\]
which is not compatible with $\tau<+\infty$ (as showed in the proof of Proposition~\ref{p:tau}~$(ii)$).

Finally, we will refer to Propositions~\ref{est-c} and~\ref{p:sistw} for comparison estimates of $c_0$. This completes the proof.
\end{proof}
%
%

{
\bibliographystyle{elsart-num-sort}
\bibliography{biblio_agar}
\bigskip\Addresses
}
\end{document}